\newcommand{\arhan}{Arhangel'ski\u{\i}\xspace}
\newcommand{\frech}{Fr\'echet-Urysohn\xspace}
\newcommand{\groth}{Gro\-then\-dieck\xspace}
\newcommand{\lind}{Lindel\"of\xspace}
\newcommand\blfootnote[1]{%
  \begingroup
  \renewcommand\thefootnote{}\footnote{#1}%
  \addtocounter{footnote}{-1}%
  \endgroup
}
\title{$C_p$-Theory for Model Theorists}
\author{Clovis Hamel${}^{1,2}$ \ and Franklin D. Tall${}^{2}$}
\date{\today}
\begin{document}

\maketitle
\blfootnote{${}^1$ The first author is an NSERC Vanier Scholar.} \blfootnote{${}^2$ Research supported by NSERC Grant A-7354.}

\blfootnote{\textit{2010 Mathematics Subject Classification}. 03C45, 03C75, 03C95, 03C98, 54C35, 46B99}

\blfootnote{\textit{Key words and phrases}. Tsirelson's space, Gowers' problem, explicitly definable Banach spaces, $C_p$-theory, model-theoretic stability, definability, double limit conditions, Grothendieck spaces.}

\abstract{We present applications of $C_p$-theory, the branch of general topology concerned with spaces of real-valued continuous functions, to model theory, mostly in the context of continuous logics. We include $C_p$-theoretic results and proofs in a self-contained way for model theorists who are not familiar with the techniques of this field. We further generalize some results of Casazza and Iovino, and of the authors, involving the definability of Banach spaces including isomorphic copies of $c_0$ or $\ell^p$, after a problem posed by Odell and Gowers.}

\section{Introduction}
$C_p$-theory is a subfield of general topology. Given a topological space $X$, $C_p(X)$ is the collection of continuous real-valued functions on $X$, considered as a subspace of $\reals^X$. $C_p$-theory is concerned with the interrelations between the topology on $X$ and the topology on $C_p(X)$. First identified as a distinct field by A.~V.~\arhan, it has grown to the point of having a monograph \cite{ArhangelskiiFunction}, a four-volume problem book \cite{Tkachuk2011}, and many research papers devoted to it. Perhaps the most famous theorem is that of A.~Grothendieck \cite{Grothendieck1952}, proved long before \arhan's work:

\begin{thm}
	If $X$ is countably compact and $A \subseteq C_p(X)$, then the closure of $A$ in $C_p(X)$ is compact if and only if $A$ is countably compact in $C_p(X)$.
\end{thm}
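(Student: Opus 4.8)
The plan is to read ``$A$ is countably compact in $C_p(X)$'' as ``$A$ is relatively countably compact in $C_p(X)$'', i.e.\ every sequence from $A$ has a cluster point in $C_p(X)$; with this reading the equivalence is Grothendieck's classical criterion, and the easy direction is immediate. Indeed, if $\overline{A}$ is compact in $C_p(X)$, then every sequence from $A\subseteq\overline{A}$ has a cluster point in $\overline{A}\subseteq C_p(X)$, so $A$ is relatively countably compact. All the content lies in the converse, so I would assume $A$ relatively countably compact and show that its closure is compact.

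First I would move to the larger ambient product $\reals^X$ and set $K:=\overline{A}^{\,\reals^X}$. The key preliminary observation is that $A$ is pointwise bounded: if $\{f(x):f\in A\}$ were unbounded for some $x$, I could choose $f_n\in A$ with $|f_n(x)|\to\infty$, and a cluster point $\phi\in C_p(X)$ of $(f_n)$ would force $\phi(x)$ to be a cluster value of $(f_n(x))$, which is impossible. Hence $A\subseteq\prod_x[-M_x,M_x]$ for suitable reals $M_x$, so $K$ is compact by Tychonoff. Since $\overline{A}^{\,C_p(X)}=K\cap C_p(X)$, it now suffices to prove that \emph{every pointwise limit point of $A$ is continuous}, i.e.\ $K\subseteq C_p(X)$; then $\overline{A}^{\,C_p(X)}=K$ is compact, as desired.

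The heart of the argument, and the step I expect to be the main obstacle, is the double-limit construction showing $K\subseteq C_p(X)$. Suppose toward a contradiction that some $g\in K$ is discontinuous at a point $x_0$, witnessed by $\epsilon>0$: every neighborhood of $x_0$ contains a point where $g$ differs from $g(x_0)$ by at least $\epsilon$. Fixing a small $\delta$, I would build by simultaneous induction a sequence $(f_n)$ in $A$ and a sequence $(y_n)$ in $X$ so that $f_n$ is $\delta$-close to $g$ at $x_0,y_1,\dots,y_{n-1}$ (possible since $g\in\overline{A}$), while $y_n$ satisfies $|g(y_n)-g(x_0)|\ge\epsilon$ and each $f_i$ with $i\le n$ is $\delta$-close at $y_n$ to its value at $x_0$ (possible by continuity of $f_1,\dots,f_n$ together with the discontinuity of $g$). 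Now I invoke the two compactness hypotheses: relative countable compactness of $A$ yields a cluster point $\phi\in C_p(X)$ of $(f_n)$, and countable compactness of $X$ yields a cluster point $\eta\in X$ of $(y_n)$. Reading the array $(f_n(y_m))$ along these cluster points, and crucially using that $\phi$ is \emph{continuous}, one gets $|\phi(y_m)-\phi(x_0)|\ge\epsilon-2\delta$ for every $m$, whereas $|\phi(\eta)-\phi(x_0)|\le 3\delta$ and, by continuity of $\phi$ at $\eta$, $|\phi(y_m)-\phi(\eta)|$ is arbitrarily small for infinitely many $m$; for $\delta$ small these are incompatible, and the contradiction forces $g$ to be continuous. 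Because $X$ need not be first countable, the delicate point throughout is that one cannot extract convergent subsequences and must argue entirely with cluster points, which is precisely what countable compactness of $X$ (for $(y_n)$) and relative countable compactness of $A$ (for $(f_n)$) supply.
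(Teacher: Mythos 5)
Your proof is correct, and in the crucial step it takes a genuinely different route from the paper's. Your reading of ``$A$ is countably compact in $C_p(X)$'' as relative countable compactness agrees with the paper's Definition \ref{ccpt}(b), and both proofs share the same frame: the easy direction, pointwise boundedness of $A$, compactness of the closure $K$ of $A$ in $\reals^X$ via Tychonoff, and the reduction to showing $K \subseteq C_p(X)$. The difference is in how a discontinuous $g \in K$ is refuted. The paper (following Todorcevic) builds nested open neighbourhoods $U_n$ of the bad point with $\overline{U_{n+1}} \subseteq U_n$ (using regularity), points $x_n \in U_n$ where $g$ jumps, and functions $f_n \in A$; it takes a limit point $x_\infty$ of the $x_n$ (which lies in $\bigcap_n \overline{U_n}$), restricts everything to the countable set $S = \{x_n : n<\omega\} \cup \{x_\infty\}$, and then invokes metrizability of $C_p(S)$ (Theorem \ref{thm:crds}), the normality result (Proposition \ref{clsrcountablycmpct}), and Lindel\"ofness of $C_p(S)$ to produce a limit function $h \in C_p(S)$ whose discontinuity at $x_\infty$ is the contradiction. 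You instead interleave the choices: each new $y_n$ is picked where $g$ jumps but all previously chosen $f_i$ are $\delta$-flat (possible since only finitely many continuous functions are involved at each stage), and then you use the hypothesis \emph{directly} to get a cluster point $\phi$ of $(f_n)$ that is already continuous on all of $X$, together with a cluster point $\eta$ of $(y_n)$ from countable compactness of $X$; a three-epsilon computation at $\eta$ finishes. Your $\delta$-flatness device replaces the paper's nested-closure device, and taking the cluster point in $C_p(X)$ itself replaces the entire restriction-to-$S$/metrizability/normality apparatus. What your version buys is brevity and self-containedness---it is essentially the classical double-limit interchange argument, needing nothing beyond Tychonoff; what the paper's version buys is precisely what its expository aim requires, namely a demonstration of the $C_p$-theoretic toolkit (restriction maps, metrizability of $C_p$ over countable sets, behaviour of countable compactness under closures) that the rest of the paper builds on.
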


The usefulness of Grothendieck's work in model theory, especially with regard to stability was recognized by J.~Iovino \cite{Iovino1999}, I.~Ben Yaacov \cite{BenYaacov2014} and A.~Pillay \cite{Pillay2017}, all in the context of logics satisfying the compactness theorem.

When Iovino described his use of Grothendieck's theorem in his solution with Casazza \cite{Casazza} of a problem of Gowers \cite{Gowers1995} to the second author, the latter realized it could be considerably generalized by using the methods and results of $C_p$-theory. This was done in the recent \cite{Hamel}, with the goal of introducing model theory to $C_p$-theorists unfamiliar with mathematical logic in general, and with model theory in particular. Here we instead target a model-theoretic audience unfamiliar with $C_p$-theory and not very knowledgeable of general topology. Our goal is to present enough $C_p$-theory so that model theorists can use it, as well as to again give an example of its usefulness by further generalizing the results of \cite{Casazza} and \cite{Hamel}. 

It is worth mentioning that \cite{Casazza} contained errors in its model-theoretic component that have been reworked in \cite{CDI}. We refer to definitions in \cite{Casazza} and the results from analysis that appear in it, which are correct. At the end of Section 3, we discuss the different approaches from \cite{Casazza} and \cite{CDI}, and how they have influenced our work. 

We have omitted some $C_p$ proofs which are very easy or not particularly germane to the main line of our exposition. On the other hand, we have included some $C_p$ proofs which are long and not easy, because the techniques are central or instructive or not easy to find in the literature. We have put these in an Appendix. We will obtain our undefinability results using special cases which are easy to prove, but thought we should include the general cases for those interested.

The main application of the results occurs with regard to Gowers' problem \cite{Gowers1995} which asks whether every infinite dimensional explicitly definable Banach space includes an isomorphic copy of either $c_{0}$ or $\ell^p$. All the definitions are given below but a model theorist would probably recognize at once that this question intersects her field as it is mainly about definability. There are two fundamental facts that motivate Gowers' problem:

\begin{enumerate}[label=(\arabic*)]
	\item All classical Banach spaces include a copy of $c_{0}$ or $\ell^p$.
	\item B.~Tsirelson \cite{Tsirelson1974} constructed a Banach space not including copies of $c_{0}$ or $\ell^p$. Such a space is regarded as the ``first truly nonclassical Banach space'' \cite{Gowers1995}.
\end{enumerate}

An interesting characteristic is that the norm of Tsirelson's space appears on both sides of the equality in its definition; this has become what analysts call an \emph{implicitly definable} norm, as opposed to \emph{explicitly definable} norms which are simply defined by a formula. (Of course one has to prove that such a norm is well-defined; this can be found in the book \cite{Casazza1989} devoted to the Tsirelson space.) After Tsirelson, many other pathological Banach spaces were constructed using similar techniques. Of course, the first problem to face is to formalize the question, and E.~Odell says in his book \cite{Odell2002} that this is a logician's task. The natural logics to approach this problem are \emph{continuous logics} as they provide ample room for approximation and the analyst's $\epsi$-play. \cite{Casazza} and \cite{CDI} presented both a formulation and a solution of this problem in the context of compact continuous logics, i.e.,~continuous logics which satisfy the Compactness Theorem. We shall present here generalizations of some results obtained in \cite{Casazza} and \cite{CDI} to a wide class of logics with the help of $C_p$-theory, in particular to non-compact, countable fragments of continuous $\mcal{L}_{\omega_1, \omega}$.

We shall allow ourselves to repeat well-known $C_p$-theoretic results and proofs from the literature for a reasonably self-contained exposition so that this work can serve as an introduction for model theorists wishing to learn $C_p$-theory.

Perhaps our title - a reflection of  \cite{Hamel} - is too ambitious, given the four-volume \cite{Tkachuk2011}. A more modest one would be ``$C_p$-theory and Definability.''

\renewcommand*{\thefootnote}{\fnsymbol{footnote}}
\section{Preliminaries in $C_p$-theory}
\subsection{Some Basic Results}
We shall assume basic knowledge of general topology such as can be found in any of the standard texts for a first graduate course, such as \cite{Willard2012}. Nonetheless, in footnotes we shall remind the reader of definitions she may have forgotten. Unless otherwise stated, we assume that all the topological spaces we consider are infinite, completely regular, and Hausdorff.

\begin{defi}
	Let $X$ be a completely regular\footnote{A space $X$ is \emph{completely regular} if for each closed set $F$ and each point $x$ not in $F$ there is a continuous real-valued $f: X \to \reals$ such that $f(x) = 0$ and $f(F) = \{1\}$.} Hausdorff topological space. $C_p(X)$ is the space of real-valued continuous functions on $X$, endowed with the subspace topology inherited from the product topology on $\reals^X$. Thus, the basic open sets of $C_p(X)$ are just the basic open sets of $\reals^X$ intersected with $C_p(X)$, i.e.
	\begin{linenomath*}
	\[W(g;x_0, \ldots, x_n; \epsi) = \st{f \in C_p(X)}{(\forall i \leq n)(\abs{f(x_i) - g(x_i)} < \epsi)}.\]
	\end{linenomath*}
\end{defi}

\begin{rmk*}
	It follows from the form of the basic open sets of $\reals^X$ and complete regularity that $C_p(X)$ is dense in $\reals^X$ (given any function $g\in\reals^X$ and a basic open set in $\reals^X$ about $g$, say $U=U(g;x_0, \ldots, x_n; \epsi)=\st{f \in \reals^X}{(\forall i \leq n)(\abs{f(x_i) - g(x_i)} < \epsi)}$, we can find a continuous function $h\in C_p(X)$ such that $(\forall i \leq n)h(x_i)=f(x_i)$ and so $h\in U$). We shall occasionally deal with $C_p(X, [0, 1])$ and $C_p(X, 2)$, with the obvious meanings. Notice that each of these is a closed subspace of $C_p(X)$. Since the main properties we deal with are closed-hereditary, if we prove results for $C_p(X)$, they apply also to the other two cases. In particular, this means that our results can be applied to first order logic, in which $C_p(X, 2)$ is relevant.
\end{rmk*}

\begin{lem}
\label{lem:rstrct}
	Let $X$ be any topological space, $Y \subseteq X$, and $\pi_Y: C_p(X) \to C_p(Y)$ the restriction map given by $\pi_Y(f) = \rstrct{f}{Y}$, $f$ restricted to $Y$. Then:
	\begin{enumerate}[label=(\roman*)]
		\item $\pi_Y$ is continuous.
		\item $\clsr{\pi_Y\left(C_p(X)\right)} = C_p(Y)$.
		\item If $Y \subseteq X$ is dense, then $\pi_Y: C_p(X) \to \pi_Y\left(C_p(X)\right)$ is a continuous bijection.
		\item If $Y \subseteq X$ is closed, then $\pi_Y: C_p(X) \to \pi_Y\left(C_p(X)\right)$ is an open map.
	\end{enumerate}
\end{lem}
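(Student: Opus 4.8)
The plan is to treat parts (i)--(iii) quickly, since all three follow from $C_p(X)$ carrying the topology of pointwise convergence, and to reserve the real effort for part (iv). For (i), for each $y \in Y$ the evaluation $f \mapsto f(y)$ is continuous on $C_p(X)$ (it is a coordinate projection of $\reals^X$ restricted to $C_p(X)$), and composing $\pi_Y$ with evaluation at $y$ in $C_p(Y)$ returns exactly this map; since the topology of $C_p(Y)$ is initial for its evaluations, $\pi_Y$ is continuous. Concretely, the $\pi_Y$-preimage of a subbasic set $\st{h \in C_p(Y)}{\abs{h(y) - c} < \epsi}$ is $\st{f \in C_p(X)}{\abs{f(y) - c} < \epsi}$, which is open. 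For (ii), I would show the image is dense in $C_p(Y)$: given $g \in C_p(Y)$ and a basic neighbourhood $W(g; y_0, \ldots, y_n; \epsi)$ with each $y_i \in Y$, the finite-interpolation argument of the Remark (valid as $X$ is Tychonoff) produces $f \in C_p(X)$ with $f(y_i) = g(y_i)$ for all $i$, so $\pi_Y(f)$ lies in the neighbourhood; as $\pi_Y(C_p(X)) \subseteq C_p(Y)$, taking closures gives $\clsr{\pi_Y(C_p(X))} = C_p(Y)$. For (iii), continuity is (i) and surjectivity onto the image is automatic, so only injectivity remains: if $\rstrct{f_1}{Y} = \rstrct{f_2}{Y}$ and $Y$ is dense, then $f_1$ and $f_2$ are continuous maps into the Hausdorff space $\reals$ agreeing on a dense set, hence $f_1 = f_2$.

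The substance is in (iv), and the main obstacle is controlling the constraints coming from points of the basic open set that lie outside $Y$. Fix a basic open $W = W(g; x_0, \ldots, x_n; \epsi)$ in $C_p(X)$ and reindex so that $x_0, \ldots, x_k \in Y$ and $x_{k+1}, \ldots, x_n \in X \setminus Y$. I claim
\[
\pi_Y(W) = \pi_Y(C_p(X)) \cap W(\rstrct{g}{Y}; x_0, \ldots, x_k; \epsi),
\]
the right-hand side being relatively open in the image, which is precisely what openness requires. The inclusion $\subseteq$ is immediate, since for $i \le k$ we have $x_i \in Y$, so the surviving constraints are automatically met by $\pi_Y(f)$ whenever $f \in W$.

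The reverse inclusion is the crux, and this is where closedness of $Y$ is indispensable. Given $h = \rstrct{f}{Y}$ on the right-hand side, $f$ itself need not lie in $W$, as it may violate the constraints at the points $x_{k+1}, \ldots, x_n$ outside $Y$; I would repair this by adding a correction that vanishes on $Y$. For each $j > k$ the set $Y \cup \st{x_i}{i > k,\ i \neq j}$ is closed (since $Y$ is) and omits $x_j$, so complete regularity furnishes a continuous $\psi_j : X \to [0,1]$ with $\psi_j(x_j) = 1$ that vanishes on this set. Setting $\phi = \sum_{j > k} (g(x_j) - f(x_j)) \psi_j$ and $f' = f + \phi$, one checks that $f' \in C_p(X)$, that $\rstrct{f'}{Y} = \rstrct{f}{Y} = h$ because $\phi$ vanishes on $Y$, and that $f'(x_j) = g(x_j)$ for $j > k$ while $f'(x_i) = h(x_i)$ stays within $\epsi$ of $g(x_i)$ for $i \le k$; hence $f' \in W$ and $h = \pi_Y(f') \in \pi_Y(W)$. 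This yields the displayed equality and with it the openness of $\pi_Y$ onto its image. It is worth stressing why closedness cannot be dropped: if some $x_j \notin Y$ were a limit point of $Y$, then any continuous $\phi$ vanishing on $Y$ would be forced to vanish at $x_j$ as well, so no such correction could exist.
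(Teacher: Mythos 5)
Your proof is correct and follows essentially the same route as the paper: parts (i)--(iii) are handled by the same coordinate-projection, finite-interpolation, and dense-agreement arguments, and for (iv) you prove the identical decomposition $\pi_Y(W) = W(\pi_Y(g); x_0, \ldots, x_k; \epsi) \cap \pi_Y(C_p(X))$ by building a continuous correction vanishing on $Y$ with prescribed values at the points outside $Y$. The only difference is cosmetic: you construct that correction explicitly as a linear combination of bump functions $\psi_j$, where the paper simply invokes complete regularity and closedness of $Y$ to assert its existence.
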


\begin{proof}\mbox{}
\begin{enumerate}[label=(\roman*)]
	\item Note that $\pi_Y$ is continuous as the pre-image of a basic open set of $C_p(Y)$ is a basic open set of $C_p(X)$.

	\item Take $f \in C_p(Y)$ and an open neighbourhood 
	\begin{linenomath*}
	\[U = W(f; x_0, \ldots, x_n; \epsi) \subseteq C_p(Y).\]
	\end{linenomath*}
	Since $X$ is completely regular, we verify that there is a $g \in C_p(X)$ such that $(\forall i \leq n)(f(x_i) = g(x_i))$: for each $i \leq n$, let $g_i\in C_p(X)$ be such that $g_i(x_i)=1$ and $g_i(x_j)=0$ for each $i\neq j$. It suffices to take $g(x)=f(x_0)g_0(x)+\ldots +f(x_n)g_n(x)$, which is continuous. Then $\pi_Y(g) \in U$. 

	\item Injectivity follows from the following well-known fact: if $f, g \in C_p(X)$ and they agree on a dense subspace, then $f=g$.

	\item Take a basic open set $W(f; x_0, \ldots, x_n; \epsi)$ of $C_p(X)$. Suppose $x_0, \ldots, x_k \in Y$ and $x_{k+1}, \ldots, x_n \in X\setminus Y$ for some $0 \leq k \leq n$. It suffices to show $\pi_Y[W(f; x_0, \ldots, x_n; \epsi)] = W(\pi_Y(f); x_0, \ldots, x_k; \epsi) \insect \pi_Y\left(C_p(X)\right)$. The inclusion
	$$\pi_Y[W(f; x_0, \ldots, x_n; \epsi)] \subseteq W(\pi_Y(f); x_0, \ldots, x_k; \epsi) \insect \pi_Y\left(C_p(X)\right)$$ 
	is immediate by the definition of $\pi_Y(f)$. To prove the other inclusion, suppose
	\begin{linenomath*} 
	\[g \in W(\pi_Y(f); x_0, \ldots, x_k; \epsi) \insect \pi_Y\left(C_p(X)\right)\]
	\end{linenomath*}
	with $\pi_Y(h) = g$. By complete regularity and the fact that $Y$ is closed, there is a continuous function $l \in C_p(X)$ such that $\rstrct{l}{Y} = 0$ and, for each $k+1 \leq i \leq n$, $l(x_i) = f(x_i) - h(x_i)$. Then $l+h \in W(f; x_0, \ldots, x_n; \epsi)$ and $\pi_Y(l+h) = g$.
\end{enumerate}\vspace{-0.7cm}
\end{proof}

Recall that if $\vphi: X \to Y$ where $X$ and $Y$ are topological spaces, we say that $\vphi$ is a \emph{homeomorphism} if it is a continuous bijection with a continuous inverse. 

A basic notion that will be relevant is the \textit{dual map}: if $f: X \to Y$ is any function, the dual map of $f$ is the function $\Phi_f: \reals^Y \to \reals^X$ given by $\Phi_f(g) = g\com f$. The following result \cite{ArhangelskiiFunction} collects the facts we shall need:

\begin{lem}
\label{lem:dual}
	Let $f: X \to Y$ be any function. Then:
\begin{enumerate}[label=(\roman*)]
	\item $\Phi_f$ is continuous.
	\item \label{thm:dualhomeo}If $f$ is surjective, then $\Phi_f$ is a homeomorphism between $\reals^Y$ and $\Phi_f\left(\reals^Y\right)$.
	\item If $f$ is a continuous surjection, then $\rstrct{\Phi_f}{C_p(Y)}$ is a homeomorphism between $C_p(Y)$ and $\Phi_f\left(C_p(Y)\right) \subseteq C_p(X)$.
\end{enumerate}
\end{lem}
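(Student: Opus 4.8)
The plan is to handle the three parts in order, with part (ii) carrying the real content while parts (i) and (iii) follow from it essentially formally.

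First I would prove (i) by exploiting the product topology on $\reals^X$. Since a map into a product space is continuous precisely when each of its coordinate functions is continuous, it suffices to check that for each fixed $x \in X$ the assignment $g \mapsto \Phi_f(g)(x) = g(f(x))$ is continuous as a map $\reals^Y \to \reals$. But this is just the projection of $\reals^Y$ onto its $f(x)$-th coordinate, which is continuous by the definition of the product topology. Equivalently, one checks that the preimage under $\Phi_f$ of a subbasic open set $\st{h \in \reals^X}{h(x) \in V}$ is the subbasic open set $\st{g \in \reals^Y}{g(f(x)) \in V}$.

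For (ii), surjectivity of $f$ first yields injectivity of $\Phi_f$: if $g_1 \com f = g_2 \com f$, then since every $y \in Y$ has the form $f(x)$ we get $g_1(y) = g_2(y)$. Combined with (i), this makes $\Phi_f$ a continuous bijection onto its image, so the only thing left is continuity of the inverse, which is really the crux of the lemma. Here I would use a section: choose (by the axiom of choice) a map $s: Y \to X$ with $f \com s = \mathrm{id}_Y$. Note that $s$ need not be continuous, but this is harmless, since (i) applies to \emph{arbitrary} functions; thus $\Phi_s : \reals^X \to \reals^Y$ is continuous. A direct computation gives $\Phi_s \com \Phi_f (g) = (g \com f)\com s = g \com (f \com s) = g$, so $\Phi_s \com \Phi_f = \mathrm{id}_{\reals^Y}$. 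Restricting $\Phi_s$ to $\Phi_f(\reals^Y)$ therefore furnishes a continuous inverse of $\Phi_f$, proving that $\Phi_f$ is a homeomorphism onto its image.

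Finally, (iii) should fall out of (ii) with almost no extra work. Since $f$ is now continuous, $g \com f$ is continuous whenever $g$ is, so $\Phi_f(C_p(Y)) \subseteq C_p(X)$; this is the one place the continuity hypothesis on $f$ is actually used. By (ii), $\Phi_f : \reals^Y \to \Phi_f(\reals^Y)$ is a homeomorphism, and a homeomorphism restricts to a homeomorphism between any subspace and its image. Applying this to the subspace $C_p(Y) \subseteq \reals^Y$, and noting that the subspace topology $\Phi_f(C_p(Y))$ inherits from $\Phi_f(\reals^Y)$ agrees with the one it inherits from $C_p(X)$ (by transitivity of subspace topologies), gives the desired homeomorphism $\rstrct{\Phi_f}{C_p(Y)} : C_p(Y) \to \Phi_f(C_p(Y))$. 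I expect the single genuine obstacle to be the continuity of the inverse in (ii); the section argument sidesteps it cleanly, and everything else is either the product-topology bookkeeping of (i) or the subspace-restriction formalism of (iii).
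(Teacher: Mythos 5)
Your proposal is correct, and parts (i) and (iii) match the paper's proof in substance: the paper also verifies (i) on basic open sets (your projection/subbasis phrasing is the same computation), and its proof of (iii) is exactly your observation that continuity of $f$ forces $\Phi_f(C_p(Y)) \subseteq C_p(X)$, after which (ii) restricts. Where you genuinely diverge is the crux of (ii), continuity of $\Phi_f^{-1}$. The paper argues directly with basic open sets: given $W(g; y_1, \ldots, y_n; \epsi)$, it uses surjectivity to pick preimages $x_i \in f^{-1}(y_i)$ and checks that $\Phi_f^{-1}\bigl(W(h; x_1, \ldots, x_n; \epsi) \insect \Phi_f(\reals^Y)\bigr) \subseteq W(g; y_1, \ldots, y_n; \epsi)$, so only finitely many choices are made per neighbourhood. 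You instead fix a global section $s: Y \to X$ with $f \com s = \mathrm{id}_Y$ and use the contravariant functoriality $\Phi_s \com \Phi_f = \Phi_{f \com s} = \mathrm{id}_{\reals^Y}$, so that $\rstrct{\Phi_s}{\Phi_f(\reals^Y)}$ is a continuous inverse. This is a clean argument, and it makes explicit a point the paper leaves implicit: the hypothesis in (i) that $f$ is an \emph{arbitrary} function (not necessarily continuous) is exactly what lets you dualize the discontinuous section $s$. The trade-offs: your route is slicker and exhibits the algebra of dual maps, but invokes the full axiom of choice to produce $s$; the paper's route is more pedestrian but needs only finitely many choices at a time and keeps the reader practicing the basic-open-set bookkeeping that the rest of the paper relies on.
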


\begin{proof}\mbox{}
\begin{enumerate}[label=(\roman*)]
	\item Suppose $\Phi_f(g) = h$. Consider a basic open neighbourhood 

	$W(h; x_1, \ldots, x_n; \epsi)$. Then
	\begin{linenomath*} 
	\[\Phi_f\left(W(g; f(x_1), \ldots, f(x_n); \epsi)\right) \subseteq W(h; x_1, \ldots, x_n; \epsi).\]
	\end{linenomath*}

	\item Let $g_1, g_2 \in \reals^Y$ be different functions. Take $y \in Y$ such that $g_1(y) \neq g_2(y)$. Since $f$ is surjective, take $x \in f^{-1}(y)$. Then $\Phi_f(g_1)(x) \neq \Phi_f(g_2)(x)$. It remains to show that $\Phi_f^{-1}$ is continuous. Suppose $\Phi_f(g) = h$, and consider a basic open set $W(g; y_1, \ldots, y_n; \epsi)$. By surjectivity, we can take $x_1, \ldots, x_n \in X$ such that $f(x_1) = y_1, \ldots, f(x_n) = y_n$. Then
	\begin{linenomath*} 
	\[\Phi_f^{-1}\left(W(h; x_1, \ldots, x_n; \epsi) \insect \Phi_f\left(\reals^Y\right)\right) \subseteq W(g; y_1, \ldots, y_n; \epsi).\]
	\end{linenomath*}

	\item Immediate from \ref{thm:dualhomeo} by noting that the range $\rstrct{\Phi_f}{C_p(Y)}$ is included in $C_p(X)$.
\end{enumerate}
\end{proof}

The following results sample the interrelations between the topology of $X$ and the one on $C_p(X)$. First, we introduce some topological cardinal functions.

\begin{defi}
	Let $(X, \mcal{T})$ be a topological space.
\begin{enumerate}[label=(\alph*)]
	\item A \emph{base} for $X$ is a family $\mathcal{B}$ of open sets such that $(\forall x \in X)(\forall U \in \mcal{T})[(x \in U) \implies (\exists V \in \mathcal{B})(x \in V \subseteq U)]$. The \emph{weight} $w(X)$ of $X$ is the minimal cardinality of a base for $X$.

	\item A \emph{local base of $X$ at a point $x \in X$} is a family $\mathcal{B}_x$ of open sets such that $x \in \Insect \mathcal{B}_x$ and for any open set $U$ containing $x$, there is some $V \in \mathcal{B}_x$ such that $x \in V \subseteq U$. The \emph{character of $X$ at $x$}, denoted $\chi(x,X)$, is the minimal cardinality of a local base of $X$ at $x$. The \emph{character of $X$} is defined by $\chi(X) = \sup\st{\chi(x,X)}{x \in X}$.
\end{enumerate}
\end{defi}

\begin{thm}
\label{thm:crds}
	Given a topological space $X$, the following equalities hold:
	\begin{linenomath*}
	\[\crd{X} = \chi(C_p(X)) = w(C_p(X)).\]
	\end{linenomath*}
\end{thm}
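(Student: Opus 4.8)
The plan is to prove all three equalities at once by establishing the cycle of inequalities
\[ \crd{X} \le \chi(C_p(X)) \le w(C_p(X)) \le \crd{X}, \]
which forces equality throughout. The middle inequality $\chi(C_p(X)) \le w(C_p(X))$ is a general fact about topological spaces that I would dispose of immediately: intersecting any base with the open sets containing a fixed point yields a local base at that point of cardinality no larger than the base, so $\chi(Y) \le w(Y)$ for every space $Y$. Thus the genuine content is concentrated in the two outer inequalities, and I expect the lower bound $\crd{X} \le \chi(C_p(X))$ to be the crux.

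For the upper bound $w(C_p(X)) \le \crd{X}$ I would use that, by definition, $C_p(X)$ is a subspace of $\reals^X$, together with the fact that weight does not increase on passing to a subspace; it therefore suffices to bound $w(\reals^X)$. A base for the product topology on $\reals^X$ is given by the boxes determined by a finite set of coordinates $x_1, \dots, x_n \in X$ and a choice of rational open interval in each of those coordinates. Since $X$ is infinite, it has exactly $\crd{X}$ finite subsets, and for each such subset there are only countably many choices of rational intervals, so there are $\crd{X} \cdot \omega = \crd{X}$ such boxes. Hence $w(C_p(X)) \le w(\reals^X) \le \crd{X}$.

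The heart of the matter is $\crd{X} \le \chi(C_p(X))$, where complete regularity does the essential work; I would compute the character at the zero function $\mathbf{0} \in C_p(X)$. Let $\mcal{B}$ be an arbitrary local base at $\mathbf{0}$. Shrinking each $B \in \mcal{B}$ to a basic neighbourhood contained in it, I attach to $B$ a finite set $F_B \subseteq X$ and an $\epsi_B > 0$ with $W(\mathbf{0}; F_B; \epsi_B) \subseteq B$. The key claim is that $\bigcup_{B \in \mcal{B}} F_B = X$. Suppose some $y \in X$ is omitted; since $\mcal{B}$ is a local base there is $B \in \mcal{B}$ with $B \subseteq W(\mathbf{0}; y; 1)$. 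As $F_B$ is finite, hence closed, and $y \notin F_B$, complete regularity supplies (after an obvious affine adjustment) a continuous function vanishing on $F_B$ and equal to $2$ at $y$; this function lies in $W(\mathbf{0}; F_B; \epsi_B) \subseteq B$ yet not in $W(\mathbf{0}; y; 1)$, a contradiction. Granting the claim, $X$ is a union of $\crd{\mcal{B}}$ finite sets; since $X$ is infinite it cannot be covered by finitely many of them, so $\mcal{B}$ is infinite and $\crd{X} \le \crd{\mcal{B}} \cdot \omega = \crd{\mcal{B}}$. As $\mcal{B}$ was arbitrary, $\crd{X} \le \chi(\mathbf{0}, C_p(X)) \le \chi(C_p(X))$.

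The step I would watch most carefully, and the only real obstacle, is that the members of a local base need not be basic open sets: the reduction of each $B$ to its finite support $F_B$ and the use of complete regularity to manufacture a function that respects $F_B$ but escapes an omitted coordinate is precisely what converts the abstract local base into a combinatorial cover of $X$ from which the cardinal count follows.
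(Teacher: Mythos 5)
Your proof is correct and follows essentially the same route as the paper's: both reduce everything to the chain $\chi(C_p(X)) \le w(C_p(X)) \le w(\reals^X) \le \crd{X}$ plus the crux $\crd{X} \le \chi(C_p(X))$, and both establish the crux at the point $\mathbf{0}$ by passing to the finite supports of basic neighbourhoods and using complete regularity to build a function that is small on a given support but large at an omitted point $y$, so that it lies in the local-base element but escapes $W(\mathbf{0};y;1)$. The only (cosmetic) difference is that you argue directly that the supports of any local base must cover $X$, whereas the paper phrases the same idea as a proof by contradiction from $\chi(C_p(X)) < \crd{X}$.
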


\begin{proof}
	It follows from the definitions (and our assumption that $X$ is infinite) that
	\begin{linenomath*} 
	\[\chi(C_p(X)) \leq w(C_p(X)) \leq w(\reals^X) \leq \crd{X},\]
	\end{linenomath*}
	so it suffices to prove $\crd{X} \leq \chi(C_p(X))$. Suppose, on the contrary, that the strict inequality $\chi(C_p(X))<\crd{X}$ holds, and denote by $\mathbf{0}$ the function that is identically $0$. Let $\mathcal{B}_0$ be a local base of $C_p(X)$ at $\mathbf{0}$ such that $\crd{B_0} < \crd{X}$. We may assume that $\mathcal{B}_0$ consists of only basic open sets of $C_p(X)$. Now consider the following set
	\begin{linenomath*} 
	\begin{align*}
		Y =
		\st{y \in X}{(\exists n<\omega)(\exists x_0, \ldots, x_n \in &X)(\exists \epsi > 0)\\
		&(W(\mathbf{0}; x_0, \ldots, x_n, y; \epsi) \in \mathcal{B}_0)}.
	\end{align*}
	\end{linenomath*} 
	By our assumption, $\crd{Y} < \crd{X}$. Take $y \in X\setminus Y$ and consider the open set $U = W(\mathbf{0}; y; 1)$, which is a neighbourhood of $\mathbf{0}$. For any $V = W(\mathbf{0}; x_0, \ldots, x_n; \epsi) \in \mathcal{B}_0$, we have $(\forall i < n)(x_i \neq y)$. By complete regularity, there is a $g \in C_p(X)$ such that, for all $i < n$, $g(x_i) = 0$ and $g(y) = 1$. Then $g \in V\setminus U$, contradicting that $\mathcal{B}_0$ is a local base.
\end{proof}

It follows from Theorem \ref{thm:crds} that metrizability and first countability\footnote{A space $X$ is first countable if $\chi(X) \leq \aleph_0$.} are indistinguishable in $C_p(X)$, i.e.~$C_p(X)$ is metrizable if and only if $C_p(X)$ is first countable if and only if $X$ is countable.

On the way toward Grothendieck's theorem, we introduce some definitions:
\begin{defi}\label{ccpt}
Let $X$ be a topological space.
\begin{enumerate}[label=(\alph*)]
	\item A subset $A \subseteq X$ is \emph{relatively compact} (in $X$) if $\clsr{A}$ is compact.

	\item $X$ is \emph{countably compact} if every infinite subset $Y$ of $X$ has a \emph{limit point}, i.e.~a point $x$ such that every open set about $x$ contains infinitely many points of $Y$. If $A \subseteq B \subseteq X$, $A$ is \emph{countably compact in $B$} if every infinite subset of $A$ has a limit point in $B$.

	\item A set of real-valued functions $A \subseteq \reals^X$ is \emph{pointwise bounded} if for every $x \in X$ there is an $M_x>0$ such that for all $f \in A$, $\abs{f(x)} < M_x$.
	
	\item The \emph{\lind number $L(X)$ of $X$} is the smallest infinite cardinal such that every open cover contains a subcover of cardinality $\leq L(X)$. $X$ is \emph{\lind} if $L(X) = \aleph_0$.
\end{enumerate}
\end{defi}

Definition \ref{ccpt} (b) can be restated in terms that justify the choice of words for "countable compactness":

\begin{prop}
    A $T_1$ space\footnote{A space is $T_1$ if each point is a closed set.} $X$ is countably compact if and only if every countable open cover of $X$ includes a finite subcover.
\end{prop}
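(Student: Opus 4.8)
The plan is to prove the two implications separately, noting in passing that only one of them uses the $T_1$ hypothesis. For the forward direction, suppose $X$ is countably compact in the sense of Definition \ref{ccpt}(b), and let $\{U_n : n < \omega\}$ be a countable open cover admitting no finite subcover. First I would exploit the failure of finite subcovers to choose, for each $n$, a point $x_n \in X \setminus (U_0 \cup \cdots \cup U_n)$. If $\{x_n : n < \omega\}$ were finite, some value would be attained for infinitely many indices and would then lie outside every $U_m$ (since $x_n \notin U_m$ whenever $n \ge m$), contradicting that the $U_n$ cover $X$; hence $Y = \{x_n : n < \omega\}$ is infinite. By countable compactness $Y$ has a limit point $x$, and since the $U_n$ cover $X$ we have $x \in U_k$ for some $k$. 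But $x_n \notin U_k$ for every $n \ge k$, so the open neighbourhood $U_k$ of $x$ meets $Y$ in at most the finitely many points $x_0, \ldots, x_{k-1}$, contradicting that $x$ is a limit point. This half uses only the ``infinitely many points'' formulation and not $T_1$.

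For the converse, I would assume every countable open cover of $X$ has a finite subcover and suppose toward a contradiction that some infinite $Y \subseteq X$ has no limit point. Passing to a countably infinite subset $D = \{y_n : n < \omega\}$ of distinct points, which again has no limit point as it is contained in $Y$, I would set $F_n = \{y_m : m \ge n\}$. The crucial step is to show that each $F_n$ is closed, and this is precisely where $T_1$ is needed: in a $T_1$ space an accumulation point of a set is automatically a limit point in the strong sense of Definition \ref{ccpt}(b), because one can excise the finitely many ``witnessing'' points using that finite sets are closed. Consequently a set with no limit point in the paper's sense has no accumulation point at all, hence contains all its accumulation points and is closed; in particular every $F_n$ is closed.

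Finally I would put $U_n = X \setminus F_n$. Since the $F_n$ decrease, the $U_n$ increase, and they cover $X$: a point outside $D$ lies in $U_0$, while $y_k \in U_{k+1}$ because the $y_m$ are distinct. By hypothesis there is a finite subcover, whose union is its largest member $U_N = X \setminus F_N$; but $y_N \in F_N$ lies in no $U_n$ with $n \le N$, a contradiction, so $X$ is countably compact. I expect the principal obstacle to be the clean and explicit deployment of $T_1$ in the converse, that is, verifying that the absence of limit points forces the tails $F_n$ to be closed, since the forward direction is a routine diagonalization argument.
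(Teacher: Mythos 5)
Your proof is correct and follows essentially the same route as the paper's: the forward direction picks points outside growing initial unions of the cover and contradicts the existence of a limit point, and the converse uses $T_1$ to see that the tails of a countably infinite subset with no limit points are closed, whose complements then form a countable cover with no finite subcover. The only difference is that you spell out two details the paper leaves implicit---the verification that the chosen set $\{x_n : n<\omega\}$ is infinite, and the explicit $T_1$ argument that accumulation points of sets are automatically limit points in the strong sense---which strengthens rather than changes the argument.
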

\begin{proof}
    Suppose $X$ is countably compact and suppose that there is a countable open cover $\{ U_n : n<\omega \}$ that does not include a finite subcover. For each $n<\omega$, pick $x_n\in X\setminus (U_0\cup ... \cup U_{n-1})$, which is possible since $U_0\cup ... \cup U_{n-1}\neq X$. Then $\{x_n :n<\omega\}$ doesn't have a limit point\footnote{A point $x$ is a limit point of a space $Y$ if every open set about $x$ intersects $Y$.}, a contradiction.\par
    Conversely, suppose every countable open cover of $X$ includes a finite subcover and that there is an infinite subset $A\subseteq X$ with no limit points. Then $A$ is closed and discrete and so all its subsets are also closed and discrete. Take a countable subset $B=\{b_n : n<\omega\}\subseteq A$ and let, for each $n<\omega$, $U_n=X\setminus \{b_k : k\geq n\}$. By $T_1$, $\{U_n : n<\omega\}$ is an open cover of $X$ which doesn't include a finite subcover, a contradiction. 
\end{proof}

The following standard result will be used in the proof of Grothendieck's theorem.

\begin{prop}\label{clsrcountablycmpct}
    If $X$ is a normal space and $A\subseteq X$ is countably compact in $X$, then $\clsr{A}$ is countably compact. 
\end{prop}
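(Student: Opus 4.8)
The plan is to argue by contradiction, reducing countable compactness of $\clsr{A}$ to the relative countable compactness already assumed, and to use normality only to manufacture a continuous function that detects an ``escaping'' sequence inside $A$. The first step is an elementary observation that makes the two notions of countable compactness coincide on $\clsr{A}$: if $C\subseteq\clsr{A}$, then every limit point of $C$ lies in $\clsr{C}\subseteq\clsr{A}$, so to prove $\clsr{A}$ is countably compact it suffices to show that every infinite subset of $\clsr{A}$ has a limit point \emph{in $X$}. Assuming this fails, there is an infinite $C_0\subseteq\clsr{A}$ with no limit point in $X$; passing to a countable subset $C=\{c_n:n<\omega\}$ keeps this property, and, just as in the proof of the previous proposition, a set with no limit point in a $T_1$ space is closed and discrete. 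So I would obtain a countable, closed, discrete $C\subseteq\clsr{A}$.

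Next I would bring normality into play. Since $C$ is closed and discrete, the assignment $c_n\mapsto n$ is a continuous real-valued function on the closed subspace $C$, so by the Tietze extension theorem there is a continuous $f:X\to\reals$ with $f(c_n)=n$ for every $n$. This is the single place where normality is used, and it is precisely what turns the local fact ``each $c_n$ is a limit of $A$'' into something globally exploitable.

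Then I would pull $C$ back into $A$. For each $n$ the set $O_n=f^{-1}((n-1/2,n+1/2))$ is an open neighbourhood of $c_n$, and since $c_n\in\clsr{A}$ we may choose $a_n\in O_n\cap A$. The values $f(a_n)$ lie in pairwise disjoint intervals, so the $a_n$ are distinct and $\{a_n:n<\omega\}$ is an infinite subset of $A$. Finally I would verify that this set has no limit point in $X$: if $p$ were one, continuity of $f$ would make $f$ bounded on some neighbourhood of $p$, yet that neighbourhood meets $\{a_n\}$ in infinitely many points, on which $f(a_n)\to\infty$, a contradiction. Hence $\{a_n\}\subseteq A$ is infinite with no limit point in $X$, contradicting that $A$ is countably compact in $X$, and so $\clsr{A}$ must be countably compact.

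The step demanding the most care is the last one, namely certifying that the pulled-back sequence genuinely escapes. Merely selecting points of $A$ close to the $c_n$ is not enough: without global control the chosen $a_n$ could accumulate somewhere even though the $c_n$ do not (pairwise disjoint neighbourhoods of a closed discrete set need not form a locally finite family). Routing the construction through the Tietze function $f$ is exactly what supplies that control, since a finite limit value $f(p)$ is incompatible with unbounded values $f(a_n)$ clustering at $p$. The opening reduction, that limit points of subsets of $\clsr{A}$ remain in $\clsr{A}$, is what allows the final contradiction to be against relative, rather than genuine, countable compactness of $A$.
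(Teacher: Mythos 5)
Your proof is correct, but it takes a genuinely different route from the paper's. The paper works with normality directly: given the closed discrete set $B=\{b_n\}_{n<\omega}\subseteq\clsr{A}$, it first separates each $b_n$ from the rest to get pairwise disjoint open sets $U_n\ni b_n$, then applies normality a second time to separate $B$ from $X\setminus\bigcup_{n<\omega}U_n$, producing a \emph{discrete} family of open sets $\{U\cap U_n\}_{n<\omega}$; picking $a_n\in U\cap U_n\cap A$ then immediately yields an infinite subset of $A$ with no limit point. You instead invoke the Tietze extension theorem to get a continuous $f:X\to\reals$ with $f(c_n)=n$, select $a_n\in f^{-1}\bigl((n-1/2,\,n+1/2)\bigr)\cap A$, and let the unboundedness of $f$ on the $a_n$ forbid accumulation. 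Both arguments hinge on the same two observations (closedness of $\clsr{A}$ makes ``no limit point in $\clsr{A}$'' the same as ``no limit point in $X$,'' and density of $A$ in $\clsr{A}$ lets you pull the escaping set back into $A$), and both need global control to prevent the chosen $a_n$ from clustering --- a pitfall you correctly identify. The difference is where that control comes from: the paper manufactures it by hand with two applications of the separation axiom, keeping the proof elementary and self-contained, while you outsource it to Tietze, which is a heavier tool (its proof is essentially a repeated use of Urysohn's lemma) but makes the non-accumulation step a one-line continuity argument. Your function-theoretic formulation is arguably more in the spirit of $C_p$-theory, and it generalizes cleanly to any situation where closed discrete sets admit unbounded continuous extensions; the paper's is preferable if one wants to exhibit the bare normality mechanics to a reader learning the techniques.
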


\begin{proof}
    Suppose that $\clsr{A}$ is not countably compact. Then there is a countably infinite closed discrete subset $B$ of $\clsr{A}$. Let $B = \{b_n\}_{n<\omega}$. It suffices to find disjoint open sets $\{U_n\}_{n < \omega}$, $b_n \in U_n$, for then we can use normality to obtain disjoint open sets $U, W$, $U\supseteq B$, $V\supseteq X\setminus \Union_{n < \omega}U_n$. Then the open sets $\{U \insect U_n\}_{n < \omega}$ form a discrete collection, i.e.~each point in $X$ is at most one of these. Take $a_n \in U\insect U_n \insect A$. Then $\{a_n : n < \omega\}$ has no limit points in $X$, contradicting $A$ being countably compact in $X$. To find the required disjoint open sets, by normality find disjoint open sets $V_n$ containing $b_n$ and $V'_n \supseteq \{b_k : k\neq n\}$. Let $U_n = V_n \insect \Insect\{V'_k : k < n\}$.
\end{proof}

Proposition \ref{clsrcountablycmpct} is not true without some hypothesis such as normality. A standard example is the \emph{tangent disk space} $X$. Consider the unit square, including the $x$-axis, Give the points above the $x$-axis their usual topology. A basic neighbourhood of a point on the axis is the point itself together with an open disk tangent to the axis at that point. Consider the set $Y$ consisting of all points above the $x$-axis with both coordinates rational. Then $Y$ is countably compact in $X$, but its closure is all of $X$, which includes the subset $[0,1]$ of the closed discrete $x$-axis, so is not countably compact.

Now we proceed to prove Grothendieck's Theorem, which also relates properties of $X$ to properties of $C_p(X)$. The proof given here is a variation on the one that appears in \cite{TodorcevicTopics}.

\begin{thm}
	Let $X$ be a countably compact topological space, and $A \subseteq C_p(X)$. Then $A$ is relatively compact (in $C_p(X)$) if and only if it is countably compact in $C_p(X)$.
\end{thm}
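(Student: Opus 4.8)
The plan is to prove the two implications separately, with essentially all the work in the ``countably compact $\Rightarrow$ relatively compact'' direction. The converse is immediate: if $\overline{A}$ is compact in $C_p(X)$ then it is countably compact, so every infinite subset of $A \subseteq \overline{A}$ has a limit point lying in $\overline{A} \subseteq C_p(X)$, which is exactly what it means for $A$ to be countably compact in $C_p(X)$.

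For the main direction, suppose $A$ is countably compact in $C_p(X)$. First I would check that $A$ is pointwise bounded: if $\{f(x) : f \in A\}$ were unbounded for some $x$, pick distinct $f_n \in A$ with $|f_n(x)| \to \infty$; then the infinite set $\{f_n\}$ can have no limit point $h$ in $C_p(X)$, since $W(h;x;1)$ would have to contain infinitely many $f_n$, forcing $|f_n(x) - h(x)| < 1$ infinitely often. Pointwise boundedness places $A$ inside a product $\prod_{x}[-M_x, M_x]$, which is compact in $\reals^X$ by Tychonoff, so $K := \overline{A}^{\reals^X}$ is compact. Since $C_p(X)$ carries the subspace topology from $\reals^X$, it now suffices to show $K \subseteq C_p(X)$, for then $K$ is exactly the closure of $A$ in $C_p(X)$ and is compact.

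Thus the crux is to show that every $g \in K$ is continuous, and this is where both countable compactness hypotheses are used at once, in a double-sequence (iterated-limit) argument. Suppose toward a contradiction that some $g \in K$ is discontinuous at a point $x^{*}$: there is $\epsilon > 0$ such that every neighbourhood of $x^{*}$ contains a point $y$ with $|g(y) - g(x^{*})| > 4\epsilon$. I would build sequences $(x_n) \subseteq X$ and $(f_n) \subseteq A$ by recursion. Given $f_0, \dots, f_{n-1}$ and $x_0, \dots, x_{n-1}$, first use $g \in \overline{A}$ to choose $f_n \in A$ with $|f_n(x^{*}) - g(x^{*})| < \epsilon$ and $|f_n(x_j) - g(x_j)| < \epsilon$ for all $j < n$; then, using continuity of $f_0, \dots, f_n$ at $x^{*}$, pick a neighbourhood of $x^{*}$ on which each of these varies by less than $\epsilon$ and choose $x_n$ inside it with $|g(x_n) - g(x^{*})| > 4\epsilon$. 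This produces the classical two-regime estimate on the array $f_i(x_m)$: when $m \geq i$ the point $x_m$ was chosen after $f_i$, so $|f_i(x_m) - g(x^{*})| < 2\epsilon$, whereas when $m < i$ the function $f_i$ was chosen to approximate $g$ at $x_m$, so $|f_i(x_m) - g(x^{*})| > 3\epsilon$.

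Finally I would extract the contradiction from the two countable compactness assumptions. By countable compactness of $A$ in $C_p(X)$, the (without loss of generality infinite) set $\{f_n\}$ has a limit point $h \in C_p(X)$; by countable compactness of $X$, the set $\{x_m\}$ has a limit point $x_\infty \in X$. Reading the ``$m < i$'' estimate through $h$ gives $|h(x_m) - g(x^{*})| \geq 3\epsilon$ for every $m$, and then continuity of $h$ at $x_\infty$ gives $|h(x_\infty) - g(x^{*})| \geq 3\epsilon$. Reading instead the ``$m \geq i$'' estimate first through $x_\infty$ (continuity of each $f_i$) yields $|f_i(x_\infty) - g(x^{*})| \leq 2\epsilon$ for all $i$, and then through $h$ gives $|h(x_\infty) - g(x^{*})| \leq 2\epsilon$, contradicting $3\epsilon \leq |h(x_\infty) - g(x^{*})| \leq 2\epsilon$. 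The main obstacle is getting the recursion and the interchange of limits exactly right: one must order the choices so that the two triangular regimes of the array $f_i(x_m)$ are genuinely separated, and verify that a set-theoretic limit point, rather than a convergent subsequence, suffices to push both estimates through to the cluster values, which is precisely what countable compactness (as opposed to sequential compactness) provides.
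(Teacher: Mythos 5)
Your proof is correct, and at its core it is the same double-array argument as the paper's: recursively build $f_n \in A$ and $x_m \in X$ so that $|f_i(x_m) - g(x^*)|$ is small when $m \geq i$ and bounded away from $g(x^*)$ when $m < i$, then use the two countable compactness hypotheses to produce cluster points whose continuity is contradicted. Where you genuinely diverge is in the bookkeeping, and your version is leaner on two counts. First, the paper arranges a decreasing chain of neighbourhoods $\overline{U_{n+1}} \subseteq U_n$ of the bad point (using regularity) precisely so that the limit point $x_\infty$ of $\{x_m\}$ lands inside every $U_n$; you never need this, because you transfer the ``$m \geq i$'' estimate to $x_\infty$ through the continuity of each fixed $f_i$, using only that every neighbourhood of $x_\infty$ contains $x_m$'s of arbitrarily large index. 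Second, the paper manufactures its limit function $h$ by restricting to the countable set $S = \{x_m\}_{m<\omega} \cup \{x_\infty\}$ and working in the metrizable space $C_p(S)$, which costs it Lemma \ref{lem:rstrct}, Theorem \ref{thm:crds} and Proposition \ref{clsrcountablycmpct}; you instead take $h$ directly in $C_p(X)$ as a limit point of $\{f_n\}$ --- which countable compactness of $A$ in $C_p(X)$ gives outright --- and use only its continuity at the single point $x_\infty$. So your route needs no restriction maps, no metrizability, and no normality lemma; what the paper's detour buys is an illustration of those reusable $C_p$-theoretic tools, in keeping with its expository aims. One point you should make explicit rather than wave at: the sets $\{f_n : n<\omega\}$ and $\{x_m : m<\omega\}$ must be infinite for the limit points to exist, and this is not really a ``without loss of generality'' --- it is automatic from your own estimates, since for $i < n$ the entries $f_i(x_i)$ and $f_n(x_i)$ lie on opposite sides of the $2\epsilon$/$3\epsilon$ divide around $g(x^*)$ (so $f_i \neq f_n$), and likewise $f_n(x_i)$ and $f_n(x_n)$ do (so $x_i \neq x_n$).
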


\begin{proof}
	If $A$ is relatively compact in $C_p(X)$, then $\clsr{A}\cap C_p(X)$ is compact and so $A$ is countably compact in $C_p(X)$. \par
	
	Conversely, suppose $A$ is countably compact in $C_p(X)$; then it is pointwise bounded (otherwise there would be a sequence $\langle f_n :n<\omega \rangle$ in $A$ and an $x\in X$ such that for each $n<\omega$, $\abs{f_n(x)}>n$, and there can be no limit point of such a sequence). For each $x \in X$, let $M_x > 0$ witness that $A$ is pointwise bounded. Then the closure $\clsr{A}$ in $\reals^X$ is compact since it is a closed subset of the compact $\prod_{x \in X}[-M_x, M_x]$. Then it suffices to show $\clsr{A} \subseteq C_p(X)$.

	Assume to the contrary that there is some $g \in \clsr{A} \setminus C_p(X)$. Then since $g$ is not continuous there are $\epsi > 0$ and $y \in X$ such that $y$ is in the closure of $Y = X\setminus g^{-1}[(g(y)-\epsi, g(y)+\epsi)]$. We recursively build the following sequences: a sequence $\{U_n\}_{n < \omega}$ of open sets containing $y$; a sequence of points $\{x_n\}_{n < \omega}$ in $Y$; and a sequence of functions $\{f_n\}_{n < \omega}$ in $A$ such that the following conditions are satisfied for each $n < \omega$:
	\begin{enumerate}[label=(\roman*)]
		\item \label{cond1} $\clsr{U_{n+1}} \subseteq U_n$.
		\item \label{cond2} $(\forall x \in U_n) (\abs{f_n(x) - f_n(y)} < \epsi/2^n)$.
		\item \label{cond3} $x_n \in U_n \insect Y$.
		\item \label{cond4} $(\forall i < n) (\abs{f_{n+1}(x_i) - g(y)} > \epsi/2)$.
		\item \label{cond5} $\abs{f_n(y) - g(y)} < \epsi/2^n$.
	\end{enumerate}

	First, take $f_0\in W(g;y;\epsi)$, $U_0$ the inverse image under $f_0$ of a ball of radius $\epsi$ around $f_0(y)$ and $x_0\in U_0\cap Y$ (where $U_0\cap Y$ is non-empty since $y$ is a limit point of $Y$).
	
	Suppose $U_i$, $x_i$, and $f_i$ have been constructed for each $i \leq n$. Take
	\begin{linenomath*}  
	\[f_{n+1} \in W(g; x_1, \ldots, x_n; \epsi/2)\insect W(g; y; \epsi/2^{n+1}) \insect A,\]
	\end{linenomath*} 
	 which is nonempty since $g$ is a limit point of $A$. Since we have that $f_{n+1}\in W(g; y; \epsi/2^{n+1})$, it satisfies \ref{cond5}. Note that $f_{n+1}$ also satisfies \ref{cond4} since for each $i<n$,
	 \begin{linenomath*}  
	 \[\abs{f_{n+1}(x_i) - g(y)} \geq \abs{g(x_i) - g(y)} - \abs{f_{n+1}(x_i) - g(x_i)} > \epsi/2\] 
	 \end{linenomath*}
	 by construction. Since $f_{n+1}$ is continuous, we may take an open set $U_{n+1}$ containing $y$ satisfying \ref{cond2}, and such that $\clsr{U_{n+1}} \subseteq U_n$, which is \ref{cond1}; this is done by considering the inverse image under $f_{n+1}$ of a ball of radius at most $\epsi/2^{n+1}$ and centred at $f_{n+1}(y)$, then taking its intersection with $U_n$ and using regularity to shrink this to get $\clsr{U_{n+1}} \subseteq U_n$. We may then pick some $x_{n+1} \in U_{n+1}\insect Y$ so that \ref{cond3} is satisfied.

	 Now $X$ is countably compact, so let $x_\infty$ be a limit point of $\{x_n\}_{n < \omega}$. Then $x_\infty \in \Insect_{n < \omega}\clsr{U_n}$, since $x_{n+1} \in U_{n+1}\subseteq \clsr{U_{n+1}} \subseteq U_n$.

	 Let $S = \{x_n\}_{n < \omega} \union \{x_\infty\}$ and consider the restriction map $\pi_S: C_p(X) \to C_p(S)$ given by $\pi_S(f) = \rstrct{f}{S}$, $f$ restricted to $S$, which is continuous by Lemma \ref{lem:rstrct}. Thus, $F = \pi_S(A)$ is a subset of $\reals^S$, which is metrizable, since $S$ is countable. By the continuity of $\pi_S$, $F$ is countably compact in $C_p(S)$: a countably infinite subset of $F$ is of the form $\{ \rstrct{h_n}{S} : n<\omega \}$ where each $h_n$ belongs to $A$ and so $\{h_n : n<\omega\}$ has a limit point $h\in C_p(X)$. Then $\pi_S(h)=\rstrct{h}{S}\in C_p(S)$ is a limit point of the original set. Since $C_p(S)$ is a metrizable space (by Theorem \ref{thm:crds}), the closure $\clsr{F}$ of $F$ in $C_p(S)$ is countably compact by Proposition \ref{clsrcountablycmpct}. Countable compactness and compactness are equivalent for metrizable spaces (metrizable spaces are \emph{paracompact} and paracompact countably compact spaces are compact, see \cite{Engelking1989}); thus $\clsr{F}$ is compact. In this particular case, there is also a simpler proof: $S$ is countable, so $\reals^S$ has a countable base so is \lind. Clearly, \lind countably compact spaces are compact. Now, let $h \in \clsr{F} \subseteq C_p(S)$ be a limit point of $\{\rstrct{f_n}{S}\}_{n < \omega}$. We verify that our construction implies $h(x_\infty) \notin \clsr{\{h(x_n)\}_{n < \omega}}$.

	 First, notice that \ref{cond2} implies $h(x_\infty) = g(y)$. Assume for a contradiction that there is some $N < \omega$ such that $\abs{h(x_N) - h(x_\infty)} < \epsi/4$, and take $M >N$ such that $\rstrct{f_M}{S} \in W(h; x_N; \epsi/4)$. Thus,
	 \begin{linenomath*} 
	 \[\abs{f_M(x_N) - g(y)} \leq \abs{f_M(x_N) - h(x_N)} + \abs{h(x_N) - h(x_\infty)} < \epsi/2,\] 
	 \end{linenomath*} 
	 which contradicts \ref{cond4}. Thus there is some $r>0$ such that
	 \begin{linenomath*}  
	 \[(\forall n < \omega)(\abs{h(x_n) - h(x_\infty)} > r),\]
	 \end{linenomath*} 
	  and so $h$ is not continuous at $x_\infty$, contradicting $h \in C_p(S)$ and $x_\infty \in S$.

	 We conclude that $\clsr{A} \subseteq C_p(X)$ and so $A$ is relatively compact in $C_p(X)$.
\end{proof}

\begin{defi}
	A \emph{$g$-space} is a topological space $X$ satisfying the following property: for every $A \subseteq X$, if $A$ is countably compact in $X$ then $A$ is relatively compact. We say that $X$ is a \emph{hereditary $g$-space} if every subspace of $X$ is a $g$-space.
\end{defi}

The Fr\'echet-Urysohn property arises naturally in the context of $g$-spaces.

\begin{defi}
	Let $X$ be a topological space. $X$ is \emph{Fr\'echet-Urysohn} if whenever $A \subseteq X$, every point $p$ in $\clsr{A}$ is the limit of a sequence of points in $A$, i.e.~every open set about $p$ contains all but finitely many members of the sequence.
\end{defi}

The following proposition from \cite{Arhangelskii1997a} will be useful for generalizing Grothendieck's Theorem:

\begin{prop}
\label{prop:gspace}
	A $g$-space $X$ is a hereditary $g$-space if and only if every compact subspace of $X$ is Fr\'echet-Urysohn.
\end{prop}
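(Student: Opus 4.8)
The plan is to prove both implications from a single observation: in a compact Hausdorff space, an infinite set $B$ whose \emph{only} limit point is a given point $p$ must contain a sequence converging to $p$. Indeed, every neighbourhood $U$ of $p$ then omits only finitely many points of $B$ (were $B\setminus U$ infinite, it would have a limit point in the compact space, which would be a limit point of $B$ lying in the closed set $K\setminus U$ and hence distinct from $p$, contradicting uniqueness), so any injective enumeration of $B$ converges to $p$. I would state and prove this as a preliminary remark and then feed it into both directions.

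For the direction ($\Rightarrow$), assume $X$ is a hereditary $g$-space and, toward a contradiction, that some compact $K\subseteq X$ is not \frech. Fix $A\subseteq K$ and $p\in\clsr{A}$ (the closure in $K$, which equals the closure in $X$ since $K$ is closed) such that no sequence in $A$ converges to $p$; note $p\notin A$. I would put $S=K\setminus\{p\}$, a subspace of $X$, and argue that $A$ witnesses that $S$ is not a $g$-space. On one hand $\clsr{A}^{S}=\clsr{A}^{X}\setminus\{p\}$ is not compact, because $p$ is a non-isolated point of the compact set $\clsr{A}^{X}$ (as $p\in\clsr{A}$ and $p\notin A$), so $A$ is not relatively compact in $S$. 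On the other hand $A$ is countably compact in $S$: any infinite $B\subseteq A$ has a limit point in the compact $K$, and if its only limit point were $p$ the preliminary observation would yield a sequence in $A$ converging to $p$, a contradiction; hence $B$ has a limit point in $S$. This contradicts the heredity hypothesis.

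For the direction ($\Leftarrow$), assume every compact subspace of $X$ is \frech\ and recall that $X$ is itself a $g$-space. Let $S\subseteq X$ and let $A\subseteq S$ be countably compact in $S$; I must show $\clsr{A}^{S}$ is compact. Since countable compactness in $S$ entails countable compactness in $X$, the $g$-space property gives that $K:=\clsr{A}^{X}$ is compact, hence \frech. The crux is to prove $K\subseteq S$: for $q\in K=\clsr{A}$, the \frech\ property supplies a sequence in $A$ converging to $q$; if its range is finite then $q\in A\subseteq S$, and otherwise an injective subsequence is an infinite subset of $A$ with unique limit point $q$, so countable compactness of $A$ in $S$ forces $q\in S$. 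Therefore $\clsr{A}^{S}=K\cap S=K$ is compact, so $S$ is a $g$-space; as $S$ was arbitrary, $X$ is a hereditary $g$-space.

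The main obstacle is the preliminary observation and, through it, the verification in the ($\Rightarrow$) direction that $A$ is countably compact in $S=K\setminus\{p\}$: one must rule out that some infinite $B\subseteq A$ accumulates \emph{only} at the deleted point $p$, and it is exactly here that the failure of the \frech\ property is converted into a usable limit point inside $S$. The remaining bookkeeping — that $\clsr{A}^{S}=\clsr{A}^{X}\setminus\{p\}$ and that deleting a non-isolated point from a compact Hausdorff space destroys compactness — is routine.
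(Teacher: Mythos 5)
Your proof is correct and follows essentially the same route as the paper's: the forward direction deletes the bad point $p$ and shows $A$ is countably compact in $K\setminus\{p\}$ but not relatively compact there, and the reverse direction uses the $g$-space property plus the \frech property of $\clsr{A}$ to force $\clsr{A}\subseteq S$. The only difference is organizational: you factor out as a preliminary lemma (in a compact Hausdorff space, a set with unique limit point $p$ contains a sequence converging to $p$) the same compactness argument the paper runs inline when ruling out that $x$ is the unique limit point of an infinite subset of $A$.
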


\begin{proof}
	Let $X$ be a $g$-space. First, assume that every compact subspace of $X$ is Fr\'echet-Urysohn. Let $Y \subseteq X$, and let $A \subseteq Y$ be countably compact in $Y$, so it is also countably compact in $X$. Since $X$ is a $g$-space, $\clsr{A}$ is compact. Then $\clsr{A}$ is Fr\'echet-Urysohn by our assumption. Since every element in $\clsr{A}$ is the limit of a sequence in $A$, we have $\clsr{A}\insect(X\setminus Y) = \emptyset$. For if $x\in X\setminus Y$ were a limit of a sequence $\langle x_n : n<\omega \rangle$ from $A$, then $\{ x_n : n<\omega \}$ would contradict $A$ being countably compact in $Y$. Thus $\clsr{A} \subseteq Y$. This shows that $\clsr{A}$ is also compact in $Y$, showing that $Y$ is a $g$-space as desired.

	Conversely, suppose $X$ is a hereditary $g$-space and assume for a contradiction that there is some $A \subseteq X$ such that $\clsr{A}$ is compact and there is an $x \in \clsr{A}$ that is not the limit of a sequence in $A$. Then $Y = \clsr{A}\setminus \{x\}$ is a $g$-space. We claim that $A$ is countably compact in $Y$: take a countably infinite set $\{ x_n : n<\omega \}$ in $A$. Any limit point of such a set must lie in $\overline{A}$. Then it is enough to check that $x$ is not the unique limit point of that set. For the sake of contradiction, suppose that $x$ is the unique limit point of that set. Now consider $\{ x_n : n<\omega \}$ as the sequence $\langle x_n : n<\omega \rangle$. By assumption, no subsequence can converge to $x$. This means that there is a neighbourhood $U$ containing $x$ such that for any $N<\omega$ there are infinitely many $k\geq N$ such that $x_k\notin U$. These $x_k$'s constitute an infinite subset of $A$ with no limit point in the compact $\overline{A}$, which is impossible. Then $x$ cannot be the unique limit point of an infinite subset of $A$ and so $A$ is countably compact in $Y$, so it is relatively compact in the $g$-space $Y$. Since $X$ is a hereditary $g$-space, $A$ is compact, hence $A = \clsr{A}$ is closed. But then $x \in A$, a contradiction.
\end{proof}

The following definition is the starting point for generalizing \groth's Theorem to a broader class of spaces:

\begin{defi}[\cite{Arhangelskii1997a}]
	A topological space $X$ is \emph{Grothendieck} if $C_p(X)$ is a hereditary $g$-space. $X$ is \emph{weakly Grothendieck} if $C_p(X)$ is a $g$-space.
\end{defi}

Any uncountable discrete space is an example of a space that is weakly Grothendieck but not Grothendieck \cite{Arhangelskii1998}.

The property of being a Grothendieck space is transferable from a dense subspace to the full space. This result appears in \cite{Arhangelskii1997a} which quotes Pryce \cite{Pryce1971}. In order to prove it, we need the following lemma, which is a variation on the well-known fact that a continuous bijection between compact spaces is necessarily a homeomorphism.

\begin{lem}
\label{lem:contbijhomeo}
	Let $X$ be countably compact, $Y$ Fr\'echet-Urysohn, and $f: X \to Y$ a continuous bijection. Then $f$ is a homeomorphism.
\end{lem}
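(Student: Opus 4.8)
The plan is to show that the continuous bijection $f$ is a closed map, since a continuous closed bijection is automatically a homeomorphism (the inverse of a closed map is continuous). So it suffices to prove that $f$ maps closed sets to closed sets. Since $X$ is countably compact and a closed subspace of a countably compact space is countably compact, it is enough to show that the image $f(C)$ of any closed (hence countably compact) $C \subseteq X$ is closed in $Y$.

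So first I would fix a closed set $C \subseteq X$ and a point $p \in \clsr{f(C)}$, and aim to show $p \in f(C)$. Here is where the \frech hypothesis on $Y$ enters: since $p$ lies in the closure of $f(C)$, there is a sequence $\langle y_n : n < \omega \rangle$ in $f(C)$ converging to $p$. Pulling back through the bijection, write $y_n = f(x_n)$ with each $x_n \in C$. The idea is now to extract a limit point of the sequence $\langle x_n : n<\omega \rangle$ inside $C$, push it forward through the continuous map $f$, and identify its image with $p$.

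The main obstacle — and the step requiring the most care — is extracting and exploiting a genuine \emph{convergent} sub-behaviour from countable compactness, which only gives a limit point rather than a convergent subsequence. Concretely, since $X$ is countably compact and $C$ is closed, the set $\{x_n : n<\omega\}$ has a limit point $x_\infty \in C$ (if the set is finite the conclusion is trivial, as some $x_n$ repeats cofinally and $p = f(x_n) \in f(C)$; so assume it is infinite). I then want to argue $f(x_\infty) = p$. The cleanest route is to show that every neighbourhood of $f(x_\infty)$ must contain $p$: given such a neighbourhood $V$, continuity of $f$ gives a neighbourhood $U$ of $x_\infty$ with $f(U) \subseteq V$, and since $x_\infty$ is a limit point of $\{x_n\}$, infinitely many $x_n$ lie in $U$, so infinitely many $y_n = f(x_n)$ lie in $V$; but $y_n \to p$, so $p$ is in the closure of $V$, and as $V$ ranges over all neighbourhoods (using regularity/Hausdorffness of $Y$) this forces $p = f(x_\infty)$.

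Thus $p = f(x_\infty) \in f(C)$, proving $f(C)$ is closed, and hence $f$ is a closed continuous bijection, i.e. a homeomorphism. The only delicacy worth flagging is the passage from ``$p$ is in the closure of every neighbourhood of $f(x_\infty)$'' to ``$p = f(x_\infty)$'', which uses that $Y$ is Hausdorff (so distinct points have disjoint neighbourhoods); if $p \neq f(x_\infty)$ one could separate them and derive a contradiction with the infinitely-many-$y_n$-in-$V$ observation.
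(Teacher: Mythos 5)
Your proof is correct and takes essentially the same route as the paper's: both use the \frech property of $Y$ to extract a convergent sequence witnessing closure membership, pull it back through the bijection, invoke countable compactness of $X$ to obtain a limit point, and then use continuity plus Hausdorffness to identify the image of that limit point with the limit of the sequence. The only cosmetic difference is the packaging---you show $f$ is a closed map, while the paper shows directly that $f^{-1}(\clsr{A}) \subseteq \clsr{f^{-1}(A)}$ for every $A \subseteq Y$, i.e.\ that $f^{-1}$ is continuous.
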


\begin{proof}
	Suppose $f: X \to Y$ is a continuous bijection as above, $A \subseteq Y$, and $y \in \clsr{A}$. Since $Y$ is Fr\'echet-Urysohn we have a sequence $S = \{y_n\}_{n < \omega}$ in $A$ converging to $y$. Then no other $x \neq f^{-1}(y)$ can be a limit point of $f^{-1}(S)$. However, by countable compactness, $f^{-1}(S)$ has a limit point, which must therefore be $f^{-1}(y)$. Then $f^{-1}(y) \in \clsr{f^{-1}(A)}$, so $f^{-1}(\clsr{A}) \subseteq \clsr{f^{-1}(A)}$ for any $A \subseteq Y$, hence $f^{-1}$ is continuous and $f$ is a homeomorphism.
\end{proof}

\begin{thm}
\label{thm:densesubspace}
	Let $Y$ be a dense subspace of $X$. If\, $Y$ is Grothendieck, so is $X$.
\end{thm}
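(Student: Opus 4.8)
The plan is to verify that $C_p(X)$ is a hereditary $g$-space by way of Proposition \ref{prop:gspace}: it suffices to show (1) that $C_p(X)$ is a $g$-space and (2) that every compact subspace of $C_p(X)$ is \frech. The entire argument is driven by the restriction map $\pi_Y \colon C_p(X) \to C_p(Y)$. Since $Y$ is dense in $X$, Lemma \ref{lem:rstrct}(iii) tells us that $\pi_Y$ is a continuous bijection onto its image $\pi_Y(C_p(X))$, and by Lemma \ref{lem:rstrct}(ii) this image is dense in $C_p(Y)$. Because $Y$ is \groth, $C_p(Y)$ is a hereditary $g$-space, so by Proposition \ref{prop:gspace} all of its compact subspaces are \frech.

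For (2), I would take a compact $K \subseteq C_p(X)$ and push it forward: $\pi_Y(K)$ is a compact, hence (by the previous paragraph) \frech, subspace of $C_p(Y)$. Now $\rstrct{\pi_Y}{K} \colon K \to \pi_Y(K)$ is a continuous bijection from a countably compact space onto a \frech one, so Lemma \ref{lem:contbijhomeo} upgrades it to a homeomorphism. Since the \frech property is clearly preserved by homeomorphisms, $K$ is \frech.

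For (1), let $A \subseteq C_p(X)$ be countably compact in $C_p(X)$; I must show $A$ is relatively compact. First, $\pi_Y(A)$ is countably compact in $C_p(Y)$: injectivity of $\pi_Y$ keeps infinite sets infinite, and continuity carries limit points across. Since $C_p(Y)$ is a $g$-space, the closure $K$ of $\pi_Y(A)$ in $C_p(Y)$ is compact, and by Proposition \ref{prop:gspace} it is \frech. As $\pi_Y(A) \subseteq K$ is then \frech and $A$ is countably compact, Lemma \ref{lem:contbijhomeo} again makes $\rstrct{\pi_Y}{A}$ a homeomorphism, so $A$ is homeomorphic to $\pi_Y(A)$.

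The crux — and the step I expect to be the main obstacle — is converting the mere countable compactness of $\pi_Y(A)$ inside the compact \frech space $K$ into genuine compactness. The key sublemma is that a countably compact subset $C$ of a Hausdorff \frech space is closed: given $p \in \clsr{C}$, choose a sequence from $C$ converging to $p$; if it is eventually constant then $p \in C$, and otherwise it is an infinite subset of $C$ whose only accumulation point, by Hausdorffness of the ambient space together with convergence, is $p$, which countable compactness of $C$ then places in $C$. Applying this to $C = \pi_Y(A) \subseteq K$ shows $\pi_Y(A)$ is closed in the compact space $K$, hence compact; transporting back through the homeomorphism, $A$ is compact, so $\clsr{A} = A$ is compact and $A$ is relatively compact. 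This establishes (1). Together, (1) and (2) yield, via Proposition \ref{prop:gspace}, that $C_p(X)$ is a hereditary $g$-space, i.e.\ $X$ is \groth.
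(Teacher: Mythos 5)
Your reduction via Proposition \ref{prop:gspace} is legitimate, and your step (2) is proved correctly: pushing a compact $K\subseteq C_p(X)$ forward, getting the \frech property of $\pi_Y(K)$ from Proposition \ref{prop:gspace} applied in $C_p(Y)$, and pulling it back through Lemma \ref{lem:contbijhomeo} is sound (and is the same mechanism the paper uses). The genuine gap is in step (1), in the phrase ``$A$ is countably compact.'' The $g$-space hypothesis only gives that $A$ is countably compact \emph{in} $C_p(X)$: infinite subsets of $A$ have limit points somewhere in $C_p(X)$, not necessarily in $A$. That is strictly weaker than $A$ being a countably compact space, which is what Lemma \ref{lem:contbijhomeo} demands of its domain. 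The same conflation occurs when you apply your sublemma to $\pi_Y(A)$: your proof of the sublemma is correct, but it needs the limit point of the chosen sequence to land \emph{in} the set itself, i.e.\ it needs $\pi_Y(A)$ countably compact in itself, whereas you only know it is countably compact in $C_p(Y)$. A concrete failure: take $Y=X=[0,1]$ and $A=\{f_n : n\geq 1\}$ with $f_n$ the constant function $1/n$. Then $A$ is countably compact in $C_p(X)$ (every infinite subset has $\mathbf{0}$ as a limit point), yet as a subspace $A$ is infinite discrete, hence not countably compact; $\pi_Y(A)=A$ is not closed in $K=\clsr{A}=A\cup\{\mathbf{0}\}$; and $A$ is neither compact nor closed. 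So the intermediate conclusions of your step (1) --- that $\rstrct{\pi_Y}{A}$ is a homeomorphism, that $\pi_Y(A)$ is closed in $K$, that $A$ is compact and $\clsr{A}=A$ --- are all false here, even though the desired conclusion ($\clsr{A}$ compact) is true.

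This is not a cosmetic slip, because the obvious repair --- replace $A$ by $\clsr{A}$ --- hits exactly the phenomenon the paper flags after Proposition \ref{clsrcountablycmpct} with the tangent disk space: without normality, the closure of a set that is countably compact in the ambient space need not be countably compact, so you still have no countably compact domain to feed into Lemma \ref{lem:contbijhomeo}. The correct argument, which is what the paper's (admittedly terse) proof is organized around, works on the image side and in the hereditary formulation: for a subspace $A$ and a set $C$ countably compact in $A$, the closure $\clsr{\pi_Y(C)}$ is taken inside $\pi_Y(A)$, so every point of it has a $\pi_Y$-preimage in $A$; then the \frech property of $\clsr{\pi_Y(C)}$ (from Proposition \ref{prop:gspace}) is used to produce, for each point of that closure, a sequence from $\pi_Y(C)$ converging to it, and countable compactness of $C$ \emph{in} $A$ together with injectivity and continuity of $\pi_Y$ (Lemma \ref{lem:rstrct}) forces the pulled-back sequence to converge in $A$ to the unique preimage. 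It is this pulled-back compact set, not $A$ or $\pi_Y(A)$ themselves, to which Lemma \ref{lem:contbijhomeo} is ultimately applied. In short: your step (2) can stand, but step (1) needs to be replaced by a sequence-pullback argument of this kind; as written, it proves a false statement (that countably-compact-in-$C_p(X)$ sets are themselves compact and closed).
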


\begin{proof}
	Consider the restriction map $\pi_Y: C_p(X) \to C_p(Y)$. By Lemma \ref{lem:rstrct}, $\pi_Y$ is a continuous bijection of $C_p(X)$ onto a subspace $Z$ of $C_p(Y)$. Since $C_p(Y)$ is a hereditary $g$-space, then so is $Z$.

	If $A \subseteq C_p(X)$ is a subspace, and $C$ is countably compact in $A$, then $\pi_Y(C) \subseteq \pi_Y(A)$ is countably compact in $\pi_Y(A)$ and thus relatively compact. By Proposition \ref{prop:gspace}, $\overline{\pi_Y(C)}$ is Fr\'echet-Urysohn and we conclude by using Lemma \ref{lem:contbijhomeo}.
\end{proof}

\arhan \cite{Arhangelskii1997a} proved the preservation of Grothendieck spaces under continuous images:

\begin{thm}
\label{thm:contimage}
	The continuous image of a Grothendieck space is Grothendieck.
\end{thm}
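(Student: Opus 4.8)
The plan is to realize $C_p(Y)$ as a topological subspace of $C_p(X)$ by means of the dual map, and then to observe that the property ``hereditary $g$-space'' both passes to subspaces and is a topological invariant. So let $f\colon X \to Y$ be a continuous surjection witnessing that $Y$ is the continuous image of $X$, and assume $X$ is Grothendieck, i.e.\ $C_p(X)$ is a hereditary $g$-space; the goal is to show $C_p(Y)$ is a hereditary $g$-space.

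The first step is to invoke Lemma \ref{lem:dual}. Since $f$ is a continuous surjection, part (iii) tells us that $\rstrct{\Phi_f}{C_p(Y)}$ is a homeomorphism from $C_p(Y)$ onto the subspace $\Phi_f\!\left(C_p(Y)\right)$ of $C_p(X)$. Thus, up to homeomorphism, $C_p(Y)$ sits inside $C_p(X)$ as a subspace.

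The second step is the elementary observation that being a hereditary $g$-space passes to subspaces: if $Z$ is a hereditary $g$-space and $W \subseteq Z$, then every subspace of $W$ is a subspace of $Z$ and hence a $g$-space, so $W$ is itself a hereditary $g$-space. Applying this with $Z = C_p(X)$ and $W = \Phi_f\!\left(C_p(Y)\right)$ shows that $\Phi_f\!\left(C_p(Y)\right)$ is a hereditary $g$-space. Finally, because the defining conditions (countable compactness and relative compactness of subspaces) are purely topological, the class of hereditary $g$-spaces is closed under homeomorphism; hence $C_p(Y)$, being homeomorphic to $\Phi_f\!\left(C_p(Y)\right)$, is a hereditary $g$-space, which is precisely the assertion that $Y$ is Grothendieck.

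I do not expect a serious obstacle here: the whole content is the recognition that the dual map embeds $C_p(Y)$ as a subspace of $C_p(X)$ when $f$ is a continuous surjection, after which everything reduces to the trivial heredity and homeomorphism-invariance of the defining property. The only points worth double-checking are exactly these last two, both immediate from the definition of $g$-space, together with confirming that Lemma \ref{lem:dual}(iii) genuinely yields a subspace embedding into $C_p(X)$ rather than merely into $\reals^X$.
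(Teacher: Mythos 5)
Your proof is correct and follows exactly the paper's own argument: apply Lemma \ref{lem:dual}(iii) to embed $C_p(Y)$ homeomorphically as a subspace of $C_p(X)$, then use the (immediate) facts that the hereditary $g$-space property is inherited by subspaces and preserved by homeomorphisms. The paper states these last two observations implicitly in one line, whereas you spell them out; the content is the same.
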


\begin{proof}
	Suppose $X$ is Grothendieck, $Y$ a topological space, and $f: X \to Y$ a continuous surjection. By Lemma \ref{lem:dual}, $C_p(Y)$ is homeomorphic to a subspace of $C_p(X)$. Since $C_p(X)$ is a hereditary $g$-space, so is $C_p(Y)$.
\end{proof}

Notice that the previous two results rely on $C_p(X)$ being a hereditary $g$-space, so the same proof does not work if we merely assume that the space is weakly Grothendieck.

\subsection{Lindel\"of $\Sigma$-spaces are Grothendieck}
\begin{defi}\label{def:pSspaces}
	Let $X$ be a topological space.
	\begin{enumerate}[label=(\alph*)]
		\item $X$ is a \emph{\lind $p$-space} if and only if it can be perfectly mapped onto a space with a countable base. Recall that a map is \emph{perfect} if it is continuous, closed, and the pre-images of points are compact.

		\item $X$ is a \emph{\lind $\Sigma$-space} if and only if it is the continuous image of a \lind $p$-space.
	\end{enumerate}
\end{defi}

These are not the original definitions but rather, equivalent ones. Perfect maps amount to just collapsing some compact sets to points. They are frequently used in general topology since they preserve many properties, either in the direct image or the inverse image. The definitions of $p$-spaces and $\Sigma$-spaces are quite complicated. Notice that countable products of \lind $p$-spaces---in particular, countable spaces---are also \lind $p$-spaces: suppose $\{X_n : n<\omega\}$ is a family of \lind $p$-spaces. For each $n<\omega$, fix a perfect surjection $f_n:X_n\to M_n$ where  $M_n$ is second countable\footnote{A space $X$ is \emph{second countable} if $w(X) \leq \aleph_0$.}. Then the diagonal product $f:\Pi_{n<\omega}X_n\to \Pi_{n<\omega}M_n$ given by $f(x_0,x_1,...)=(f_0(x_0),f_1(x_1),...)$ is perfect (see \cite{Engelking1989}, p.~185, for the details) and its range is a second countable space since second countability is hereditary and preserved by countable products. Thus, $\Pi_{n<\omega}X_n$ is a \lind $p$-space. 

The class of spaces which are \groth has been widely studied by \arhan \cite{Arhangelskii1997a}. Following \cite{Arhangelskii1997a}, we shall prove in the Appendix that
\begin{thm}\label{thm:lindSgroth}
	All \lind $\Sigma$-spaces are \groth.
\end{thm}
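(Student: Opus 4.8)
The plan is to reduce to the simplest class of spaces and then isolate exactly the two features that Proposition \ref{prop:gspace} identifies. Since every \lind $\Sigma$-space is, by definition, a continuous image of a \lind $p$-space, and since Theorem \ref{thm:contimage} guarantees that \groth-ness is preserved under continuous images, it suffices to prove the theorem for a \lind $p$-space $X$. I therefore fix such an $X$ together with a perfect surjection $p\colon X\to M$ onto a second countable space $M$, and aim to show that $C_p(X)$ is a hereditary $g$-space. By Proposition \ref{prop:gspace} this splits into two tasks: (a) that $C_p(X)$ is a $g$-space (that $X$ is weakly \groth), and (b) that every compact subspace of $C_p(X)$ is \frech.

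For (a) I would rerun the proof of \groth's Theorem almost verbatim. Given $A\subseteq C_p(X)$ that is countably compact in $C_p(X)$, the pointwise-boundedness step and the conclusion that $\clsr{A}$ is compact in $\reals^X$ need no change, so once more the only thing to exclude is a discontinuous $g\in\clsr{A}\setminus C_p(X)$. The single use of countable compactness of $X$ in the original argument was to produce a limit point $x_\infty$ of the recursively constructed sequence $\{x_n\}$; this is precisely where the $p$-space structure enters. While building the $U_n$ I would additionally shrink each one using continuity of $p$ at the fixed point $y$ so that the metric distance from $p(x_n)$ to $p(y)$ tends to $0$; then $\{p(x_n)\}\cup\{p(y)\}$ is a compact (convergent) subset of $M$, and by perfectness of $p$ its preimage is a compact subset of $X$ containing every $x_n$. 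Hence $\{x_n\}$ accumulates at some $x_\infty\in X$, and the remainder of the \groth argument---restricting to the countable set $S=\{x_n\}\cup\{x_\infty\}$ and deriving a contradiction from the construction---goes through unchanged.

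For (b) I would show that any compact $K\subseteq C_p(X)$ is \frech by realizing it as a Gul'ko compactum. The evaluation map carries $X$ into $C_p(K)$; its image $Z$ is a continuous image of $X$, hence again a \lind $\Sigma$-space, and it separates the points of $K$, so $K$ embeds as a compact subspace of $C_p(Z)$. Compacta arising this way from \lind $\Sigma$-spaces are exactly the Gul'ko compacta, and the substantive fact I need is that every Gul'ko compactum is \frech. I expect this to be the main obstacle: unlike (a), it does not come from a direct adaptation of the sequential construction but requires the structure theory of compact subsets of $C_p$ over \lind $\Sigma$-spaces---a point-countable, $T_0$-separating family for $K$ read off from a countable network modulo the compact cover of $Z$, from which the sequential (indeed Preiss--Simon) property is extracted. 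By comparison, the reduction, the perfect-map extraction of the limit point, and the bookkeeping through Proposition \ref{prop:gspace} are routine.
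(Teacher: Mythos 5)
Your skeleton is exactly the paper's: reduce to \lind $p$-spaces via Theorem \ref{thm:contimage}, then split the \groth property into ``weakly \groth'' plus ``compact subspaces of $C_p(X)$ are \frech'' via Proposition \ref{prop:gspace}. Your part (a) is correct and takes a genuinely different route. The paper proves weak \groth-ness by showing \lind $p$-spaces are $k$-spaces (Theorem \ref{lindpsk}) and $k$-spaces are weakly \groth (Proposition \ref{prop:kspacewgroth}, which leans on \groth's theorem for the compact subspaces). You instead rerun the \groth construction directly: since the second countable image $M$ is first countable at $p(y)$, you may interleave the requirement $U_n \subseteq p^{-1}(B_n)$ for a decreasing local base $\{B_n\}$ at $p(y)$, so that $\{p(x_n)\}\cup\{p(y)\}$ is compact; perfectness of $p$ makes its preimage compact, and that preimage contains every $x_n$, yielding the limit point $x_\infty$ that countable compactness used to provide. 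The remaining steps indeed use only countable compactness of $A$ in $C_p(X)$, so they go through unchanged. This is a clean, self-contained alternative to the $k$-space detour.

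Part (b), however, contains the genuine gap, and it is the whole theorem. A Gul'ko compactum is, \emph{by definition}, a compact subspace of $C_p(Z)$ for some \lind $\Sigma$-space $Z$; consequently the statement ``every compact $K \subseteq C_p(X)$ is \frech, for $X$ \lind $\Sigma$ (or \lind $p$)'' \emph{is} the statement ``every Gul'ko compactum is \frech.'' Your evaluation-map maneuver ($X \to C_p(K)$, image $Z$, embed $K$ into $C_p(Z)$) only re-verifies that $K$ is Gul'ko, which it already was; it makes no progress. You then cite ``Gul'ko compacta are \frech'' --- which rests on the Gul'ko--Sokolov structure theory (Gul'ko implies Corson, point-countable $T_0$-separating families, the Preiss--Simon property) --- as a black box, and you acknowledge you do not carry it out. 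That cited theorem is at least as deep as the one being proved, so as written the argument for (b) is circular modulo an unproved external result. This is precisely the part the paper proves with actual content: $C_p(X)$ is countably tight by the \arhan-Pytkeev Theorem \ref{thm:tcpsup} (since finite powers of \lind $p$-spaces are \lind), and monolithic by Theorems \ref{lindssta} and \ref{monosta} (\lind $\Sigma$-spaces are stable, and stability of $X$ dualizes to monolithicity of $C_p(X)$); both properties are hereditary, so Lemma \ref{ctmono} (compact $+$ countably tight $+$ monolithic $\Rightarrow$ \frech) finishes. To close your gap you would need either to reproduce this tightness/monolithicity machinery or to supply an actual proof of the \frech property of Gul'ko compacta.
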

\lind $\Sigma$-spaces constitute a nice class as they can be also described as follows: The class of \lind $\Sigma$-spaces is the smallest class of spaces containing all compact spaces, all second countable spaces, and that is closed under finite products, closed subspaces, and continuous images (\cite{Tkachuk2010} contains a detailed exposition of these facts).

Examples of \lind $\Sigma$-spaces include compact spaces, \emph{Polish spaces}, and $K$\textit{-analytic spaces} (see \cite{Tkachuk2010}). They coincide with the $K$\textit{-countably-determined spaces} of \cite{RJ}. We find it useful to think of \lind $\Sigma$-spaces as a weakening of $K$-analytic spaces. Those latter spaces have found applications in Analysis as a generalization of \emph{Polish} (separable completely metrizable) spaces. They can be characterized as continuous images of perfect pre-images of Polish spaces. Thus, \lind $\Sigma$-spaces just drop ``completeness'' from $K$-analytic spaces. These more topological---as contrasted with descriptive set-theoretic---definitions of $K$-analytic and $K$-countably determined spaces were apparently not known to Rogers and Jayne. For further discussion see \cite{Tall2020}.

The proof that all \lind $\Sigma$-spaces are \groth is not easy. We have put it into the Appendix for those who want to learn more $C_p$-theory and are willing to invest some effort. It is put together from several important theorems of $C_p$-theory. It turns out, however, that for the particular application to undefinability of Banach spaces that we are interested in, we only need to prove that \emph{countable} spaces are \groth! The proof of this is quite easy so we give it in the main text. The reason we can get away with this trivial case is that we only deal with countable languages, such as first order continuous logic and countable fragments of continuous $\mcal{L}_{\omega_1, \omega}$.

\begin{defi}
 The \emph{tightness $t(X)$ of $X$} is the smallest infinite cardinal such that whenever $A \subseteq X$ and $x \in \clsr{A}$, there exists a $B \subseteq A$ such that $\crd{B} \leq t(X)$ and $x \in \clsr{B}$. When $t(X)=\aleph_0$, we say that $X$ is \textit{countably tight}.
\end{defi}

Countable tightness plays an important role in the study of \groth spaces, as we shall see in the proof that \lind $\Sigma$-spaces are \groth. There, it is the countable tightness of $C_p(X)$ that is important, and this is obtained from the fact that countable products of \lind $\Sigma$-spaces are \lind. It is also important that

\begin{thm}\label{thm:stightwgroth}
	Countably tight spaces are weakly \groth. 	
\end{thm}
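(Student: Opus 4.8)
The plan is to mimic the proof of Grothendieck's Theorem, replacing the recursive construction that there exploited the countable compactness of $X$ with a single appeal to countable tightness. Let $A \subseteq C_p(X)$ be countably compact in $C_p(X)$; I must show that $A$ is relatively compact, i.e.~that its closure in $C_p(X)$ is compact. Exactly as in the Grothendieck argument, $A$ is pointwise bounded, since a sequence in $A$ whose values at some point $x$ escape to infinity can have no limit point in $C_p(X)$. Fixing witnesses $M_x > 0$, the closure $K$ of $A$ in $\reals^X$ is a closed subset of the compact box $\prod_{x \in X}[-M_x, M_x]$, hence compact. Since the closure of $A$ in $C_p(X)$ equals $K \insect C_p(X)$, it suffices to show $K \subseteq C_p(X)$: then that closure is $K$ itself and is compact, so $C_p(X)$ is a $g$-space and $X$ is weakly \groth.

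Suppose for contradiction that some $g \in K$ fails to be continuous, say at a point $y \in X$; then there are $\epsi > 0$ and, setting $Y = \{x \in X : \abs{g(x) - g(y)} \geq \epsi\}$, we have $y \in \overline{Y}$. This is the one place countable tightness is used: because $t(X) = \aleph_0$, I may choose a \emph{countable} $D \subseteq Y$ with $y \in \overline{D}$. Let $S = D \union \{y\}$, a countable subspace of $X$. Since the open neighbourhoods of $y$ in $S$ are exactly the traces of neighbourhoods in $X$, we still have $y \in \overline{D}$ relative to $S$, while $\abs{g(x) - g(y)} \geq \epsi$ for every $x \in D$; hence $\rstrct{g}{S}$ is discontinuous at $y$, so $\rstrct{g}{S} \notin C_p(S)$.

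Now I push everything down to $S$ through the restriction map $\pi_S : C_p(X) \to C_p(S)$, continuous by Lemma \ref{lem:rstrct}, and set $F = \pi_S(A)$. On the one hand, $\rstrct{g}{S}$ lies in the closure of $F$ in $\reals^S$, because the coordinate projection $\reals^X \to \reals^S$ is continuous and carries $g \in K$ into that closure. On the other hand, $F$ is countably compact in $C_p(S)$: arguing as in the Grothendieck proof, a countably infinite subset of $F$ has the form $\{\rstrct{h_n}{S} : n < \omega\}$ with the $h_n \in A$ distinct, a limit point $h \in C_p(X)$ of $\{h_n : n < \omega\}$ exists by hypothesis, and $\rstrct{h}{S}$ is then a limit point of the subset by continuity of $\pi_S$. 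But $S$ is countable, so by Theorem \ref{thm:crds} the space $C_p(S)$ is metrizable; hence, via Proposition \ref{clsrcountablycmpct} together with the coincidence of compactness and countable compactness for metrizable spaces, the closure of $F$ in $C_p(S)$ is compact, therefore closed in $\reals^S$, therefore equal to the closure of $F$ in $\reals^S$ and contained in $C_p(S)$. This forces $\rstrct{g}{S} \in C_p(S)$, contradicting the previous paragraph. Hence $K \subseteq C_p(X)$, as required.

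The genuinely new idea, relative to Grothendieck's Theorem, is the localization of the discontinuity of $g$ to a countable subspace $S$ on which $C_p$ is metrizable; countable tightness is precisely the hypothesis that makes this localization possible and lets us dispense with the intricate recursion of the classical proof. I expect the only delicate bookkeeping to be the two transfer facts of the last paragraph: that restriction sends a set countably compact in $C_p(X)$ to one countably compact in $C_p(S)$, and that $\rstrct{g}{S}$ is simultaneously discontinuous and a member of the closure of $F$ in $\reals^S$.
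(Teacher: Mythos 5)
Your proof is correct. Note that the paper does not actually prove this theorem: it only cites \cite{Arhangelskii1997a} and \cite{TallGrot}, so there is no internal proof to compare against. What you have done is adapt the paper's proof of Grothendieck's Theorem, replacing the recursive construction of the sequences $\{U_n\}$, $\{x_n\}$, $\{f_n\}$ (which is where countable compactness of $X$ was used, both to run the recursion and to produce the limit point $x_\infty$) by a single application of countable tightness: since $y \in \clsr{Y}$ and $t(X)=\aleph_0$, you get a countable $D \subseteq Y$ with $y \in \clsr{D}$, so the discontinuity of $g$ is already witnessed on the countable set $S = D \cup \{y\}$, and the tail of the Grothendieck argument (restriction to $C_p(S)$, countable compactness of $\pi_S(A)$ in $C_p(S)$, metrizability of $C_p(S)$, Proposition \ref{clsrcountablycmpct}, and the coincidence of compactness with countable compactness in metrizable spaces) goes through essentially verbatim. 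The two transfer facts you flagged as delicate are both sound: $\rstrct{g}{S}$ lies in the closure of $\pi_S(A)$ in $\reals^S$ because the coordinate projection $\reals^X \to \reals^S$ is continuous and extends $\pi_S$; and an infinite subset of $\pi_S(A)$ consists of distinct restrictions, so its chosen preimages in $A$ are distinct, their limit point $h \in C_p(X)$ exists by hypothesis, and basic neighbourhoods of $\rstrct{h}{S}$ pull back to basic neighbourhoods of $h$, which is exactly the argument the paper itself uses inside the Grothendieck proof. It is worth observing that your localization device is the same one the paper deploys later in the proof of Theorem \ref{thm:doublelimit}, where countable tightness of $X$ again serves to replace the witness set of a discontinuity by a countable subset; so your argument is fully consonant with the paper's methods, and it has the additional merit of making the paper self-contained at a point where it currently defers to the literature.
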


Theorem \ref{thm:stightwgroth} is stated in \cite{Arhangelskii1997a} as part of a much more general result. A different proof is in \cite{TallGrot}.\\

The second author \cite{TallGrot} has recently shown that:

\begin{thm}
    The Proper Forcing Axiom ($\mathbf{PFA}$) implies that \lind countably tight spaces are Grothendieck.
\end{thm}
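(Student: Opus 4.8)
The plan is to convert the statement into a claim about compact subspaces of $C_p(X)$ and to bring in $\mathbf{PFA}$ only at the very end, through Balogh's theorem that $\mathbf{PFA}$ implies every compact Hausdorff space of countable tightness is sequential. Since $X$ is countably tight, Theorem~\ref{thm:stightwgroth} already gives that $X$ is weakly \groth, i.e.\ $C_p(X)$ is a $g$-space. What remains is to promote this to $C_p(X)$ being a \emph{hereditary} $g$-space, and by Proposition~\ref{prop:gspace} this is exactly the assertion that every compact $K \subseteq C_p(X)$ is \frech. So the whole problem reduces to proving that compact subspaces of $C_p(X)$ are \frech, and the \lind and countable-tightness hypotheses on $X$ must be spent on them.

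The engine I would build is a purely topological (ZFC) lemma: because $X$ is \lind, every compact $K \subseteq C_p(X)$ has \emph{countable fan tightness}, meaning that whenever $f \in \bigcap_{n} \clsr{A_n}$ for sets $A_n \subseteq K$ there are finite $F_n \subseteq A_n$ with $f \in \clsr{\Union_n F_n}$. The natural way to get at this is the evaluation duality: the map $e\colon X \to C_p(K)$, $e(x)(\varphi) = \varphi(x)$, is continuous, so $\tilde X = e(X)$ is \lind in $C_p(K)$; each $\varphi \in K$ is constant on the fibres of $e$ and therefore factors through $e$ as an element of $C_p(\tilde X)$, which realizes $K$ as a compact subset of $C_p(\tilde X)$ over the \lind space $\tilde X$. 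In this symmetric position one runs a \groth-style iterated-limit argument, using $L(X) = \aleph_0$, to extract the finite sets $F_n$. I expect this lemma to be the main obstacle: it is the compact-subset analogue of \arhan's theorems relating the tightness and fan tightness of $C_p(X)$ to the \lind degree of the finite powers of $X$, and making the compactness of $K$ do the work of those finite powers is the delicate point.

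With the lemma in hand the rest is formal. Countable fan tightness implies countable tightness, so each compact $K \subseteq C_p(X)$ is countably tight; invoking $\mathbf{PFA}$ through Balogh's theorem then makes $K$ sequential. Finally, a sequential space of countable fan tightness is \frech: the canonical obstruction to the \frech property in a sequential space is a subspace homeomorphic to the sequential fan $S_\omega$ with its apex as the non-reachable point, and countable fan tightness forbids such a fan, since applying it to the spines would place the apex in the closure of a set meeting each spine in a finite set, which is impossible. Hence every compact $K \subseteq C_p(X)$ is \frech, and by the reduction of the first paragraph $X$ is \groth. The countable tightness of $X$, meanwhile, is what secures that $X$ is weakly \groth at the outset via Theorem~\ref{thm:stightwgroth}, so both hypotheses are genuinely used.
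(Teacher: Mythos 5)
Two preliminary remarks. First, the paper itself contains no proof of this theorem: it is quoted from \cite{TallGrot}, so there is no in-text argument to compare yours against, and I can only judge your proposal on its own merits. Second, your opening reduction is correct and is surely how any proof must begin: Theorem~\ref{thm:stightwgroth} makes $C_p(X)$ a $g$-space, Proposition~\ref{prop:gspace} then converts the \groth property of $X$ into the statement that every compact $K \subseteq C_p(X)$ is \frech, and your factorization of $K$ through the \lind image $\tilde{X}=e(X)\subseteq C_p(K)$ is sound (each $\varphi \in K$ becomes the restriction to $\tilde{X}$ of a coordinate projection, so $K$ embeds in $C_p(\tilde{X})$). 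The first genuine gap is that your entire ZFC engine --- for \lind $X$, every compact $K \subseteq C_p(X)$ has countable fan tightness --- is never proved; you yourself flag it as ``the main obstacle.'' It is not a routine obstacle: the \lind property supplies countable subcovers, whereas fan tightness demands \emph{finite} selections, which is a Menger-type selection principle. Indeed, by a theorem of \arhan, $C_p(X)$ itself has countable fan tightness exactly when all finite powers of $X$ have the Menger property, a condition strictly stronger than \lind even for Polish spaces (the irrationals fail it). So ``running a \groth-style argument using $L(X)=\aleph_0$'' conflates \lind with Menger; the compactness of $K$ would have to be exploited in some essential way that you do not indicate, and it is not clear to me that the lemma is even true.

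The second gap is fatal, because that step is false, not merely unproven. Your stated justification already misfires: the sequential fan $S_\omega$ is itself \frech (if the apex is in the closure of a set, the set meets some spine infinitely, and that spine converges to the apex), so it cannot be the obstruction; the space you want is Arens' space $S_2$. But even the corrected implication ``sequential $+$ countable fan tightness $\Rightarrow$ \frech'' fails, and it fails under $\mathbf{PFA}$ itself. Let $\mathcal{A}$ be a \emph{tight} MAD family: for every sequence $(B_n)$ of sets positive with respect to the ideal generated by $\mathcal{A}$ and the finite sets, some single $A \in \mathcal{A}$ meets every $B_n$ infinitely. Such families exist whenever $\mathbf{b}=\mathbf{c}$ (Hru\v{s}\'{a}k--Garc\'{\i}a Ferreira), hence under $\mathbf{CH}$, $\mathbf{MA}$, and $\mathbf{PFA}$. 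Let $K$ be the one-point compactification of the Mr\'{o}wka space $\Psi(\mathcal{A})=\omega\cup\mathcal{A}$. Then $K$ is compact and sequential of order $2$ (limits of sequences from $\omega$ land in $\mathcal{A}$, and sequences of distinct points of $\mathcal{A}$ converge to $\infty$), and $K$ is not \frech: $\infty \in \overline{\omega}$, yet by maximality no sequence from $\omega$ converges to $\infty$. Nevertheless $K$ has countable fan tightness: it is first countable at every point other than $\infty$; and given sets $A_n$ with $\infty \in \overline{A_n}$, one may assume each $A_n \cap \omega$ is positive, apply tightness repeatedly to obtain infinitely many distinct $C_1, C_2, \ldots \in \mathcal{A}$ with $C_i \cap A_n$ infinite for all $i,n$, and let $F_n$ consist of one fresh point of $A_n \cap C_i$ for each $i \le n$; then $\bigcup_n F_n$ meets every $C_i$ infinitely, so it is positive and has $\infty$ in its closure. (Countable fan tightness is hereditary and $S_2$ lacks it, so this $K$ contains no copy of $S_2$ either; no forbidden-subspace repair is available.) Thus compact, sequential, countably-fan-tight, non-\frech spaces exist under $\mathbf{PFA}$, and your endgame collapses precisely under the hypothesis of the theorem. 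Whatever property of compact subsets of $C_p(X)$, $X$ \lind, the proof in \cite{TallGrot} combines with Balogh's theorem, countable fan tightness cannot be it.
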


The conclusion does not hold  if $\mathbf{V=L}$ (see \cite{TallGrot}). Also in the realm of set theory is \arhan's theorem which depends on work of Okunev \cite{Okunev}:
\begin{thm}[\cite{Arhangelskii1997a}]
    $\mathbf{MA_{\aleph_1}}$ implies countably tight spaces with finite powers \lind are Grothendieck.
\end{thm}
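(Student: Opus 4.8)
The plan is to unwind the definition of \groth through Proposition \ref{prop:gspace} and then supply the two ingredients it demands, arranging matters so that the set-theoretic hypothesis is used only at the very last step. Recall that ``$X$ is \groth'' means precisely that $C_p(X)$ is a hereditary $g$-space. Applying Proposition \ref{prop:gspace} with $C_p(X)$ in the role of the ambient space, once we know $C_p(X)$ is a $g$-space it will be a \emph{hereditary} $g$-space if and only if every compact subspace of $C_p(X)$ is \frech. So the proof splits into two tasks: (I) show $C_p(X)$ is a $g$-space; and (II) show every compact $K \subseteq C_p(X)$ is \frech. Only (II) will invoke $\mathbf{MA_{\aleph_1}}$.

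For (I), I would first observe that $C_p(X)$ is countably tight. By a theorem of \arhan and Pytkeev, $t(C_p(X)) = \sup_{n<\omega} L(X^n)$; since every finite power $X^n$ is \lind, this supremum is $\aleph_0$, so $C_p(X)$ is countably tight. Theorem \ref{thm:stightwgroth} then yields at once that $C_p(X)$ is weakly \groth, i.e.\ a $g$-space, which completes (I). I record for later use that, tightness being monotone under passage to subspaces, every subspace of $C_p(X)$---in particular every compact one---is countably tight as well.

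For (II), the idea is to show that these compacta are Corson compact and then to exploit the fact that $\mathbf{MA_{\aleph_1}}$ tames ccc Corson compacta. The structural input, due to Okunev \cite{Okunev}, is that when $X$ is countably tight with all finite powers \lind, every compact subspace $K$ of $C_p(X)$ embeds into a $\Sigma$-product of lines, i.e.\ is Corson compact. Granting this, fix such a $K$; it is countably tight by (I). To prove $K$ is \frech it suffices to show that every separable closed subspace of $K$ is \frech: given $A \subseteq K$ and $x \in \clsr{A}$, countable tightness produces a countable $B \subseteq A$ with $x \in \clsr{B}$, and a sequence from $B$ converging to $x$ inside the \frech space $\clsr{B}$ then witnesses the \frech property of $K$ at $x$. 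Now a closed subspace of a Corson compactum is Corson compact, and a separable space is ccc; hence each $\clsr{B}$ is a ccc Corson compactum. This is exactly where $\mathbf{MA_{\aleph_1}}$ enters, through the theorem that under $\mathbf{MA_{\aleph_1}}$ every ccc Corson compact space is metrizable---and metrizable compacta are \frech. This establishes (II), and together with (I) and Proposition \ref{prop:gspace} we conclude that $C_p(X)$ is a hereditary $g$-space, i.e.\ that $X$ is \groth.

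The main obstacle lies squarely in (II), and it is twofold. The genuinely non-elementary content consists of the two imported theorems: Okunev's result that the compact subsets of $C_p(X)$ are Corson compact under these hypotheses (this is where countable tightness of $X$ is used, beyond its indirect role through the powers being \lind), and the metrization of ccc Corson compacta under $\mathbf{MA_{\aleph_1}}$. Everything else---the reduction of the \frech property to separable closed subspaces and the bookkeeping with Proposition \ref{prop:gspace}---is routine. It is worth noting that the axiom is invoked at a single, well-isolated point, which also explains why the conclusion is consistency-sensitive rather than a theorem of ZFC.
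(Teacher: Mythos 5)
First, a caveat: the paper does not prove this theorem at all --- it quotes it from \cite{Arhangelskii1997a}, noting only that it depends on work of Okunev --- so your proposal must be judged on its own merits rather than against a proof in the text. Your skeleton (use Proposition \ref{prop:gspace} to split the task into (I) ``$C_p(X)$ is a $g$-space'' and (II) ``every compact subspace of $C_p(X)$ is \frech'') is the natural one. But already in (I) you apply Theorem \ref{thm:stightwgroth} to the wrong space: concluding that ``$C_p(X)$ is weakly \groth'' would mean that $C_p(C_p(X))$ is a $g$-space, which is not what (I) asks for, and ``weakly \groth'' is not synonymous with ``$g$-space.'' What you need is that $X$ itself is weakly \groth, and that follows from Theorem \ref{thm:stightwgroth} applied to $X$, which is countably tight by hypothesis; the \arhan--Pytkeev computation $t(C_p(X)) = \sup_{n<\omega} L(X^{n+1}) = \aleph_0$ (Theorem \ref{thm:tcpsup}) is still needed, but only to make the compacta in (II) countably tight. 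This slip is easily repaired.

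The genuine gap is the structural claim in (II) that you attribute to Okunev: that every compact subspace of $C_p(X)$ is Corson compact when $X$ is countably tight with all finite powers \lind. You offer no argument for it, and it is almost certainly not what Okunev proved, because as a ZFC statement it would render $\mathbf{MA_{\aleph_1}}$ superfluous and turn \arhan's theorem into a theorem of ZFC. Your own argument exposes this: after reducing, via countable tightness, to closed separable subspaces $\clsr{B} \subseteq K$, which you correctly note are again Corson compact, you have no need of ccc or of Martin's Axiom, since \emph{separable} Corson compacta are metrizable outright --- if $D$ is a countable dense subset, every point of $\clsr{D}$ has support inside the countable set $\Gamma_0 = \bigcup_{d \in D}\mathrm{supp}(d)$ (a point with a nonzero coordinate outside $\Gamma_0$ has a basic neighbourhood missing $D$), so projection to $\reals^{\Gamma_0}$ is a continuous injection of a compactum into a metrizable space, hence an embedding. (Alternatively: the same support argument shows Corson compacta are monolithic, and your compacta are countably tight, so Lemma \ref{ctmono} makes them \frech with no axiom at all.) Thus your detour ``separable $\Rightarrow$ ccc, then $\mathbf{MA_{\aleph_1}}$ recovers metrizability'' uses the axiom to buy back what separability gives for free, and the whole proof, if its ingredients were sound, would establish the theorem in ZFC --- contrary to the fact that the paper presents this result as genuinely set-theoretic, alongside the PFA theorem whose conclusion fails under $\mathbf{V=L}$, and that \arhan asserts it only under $\mathbf{MA_{\aleph_1}}$. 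The entire difficulty of the theorem is hiding precisely in the step you outsourced: whatever Okunev and \arhan actually establish about compact subsets of such $C_p(X)$, it is weaker than Corson compactness, and the axiom must be woven into that analysis rather than invoked at the single, well-isolated point your write-up claims.
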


As we said, all we will need in order to prove our definability result is:

\begin{thm}
	Every countable space is \groth.
\end{thm}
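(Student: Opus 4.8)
The plan is to prove directly that $C_p(X)$ is a hereditary $g$-space, which by definition is exactly the statement that $X$ is \groth. The entire argument rests on a single structural observation: when $X$ is countable, $C_p(X)$ is metrizable. Indeed, by Theorem~\ref{thm:crds} we have $w(C_p(X)) = \crd{X} = \aleph_0$, and since $C_p(X)$ sits inside $\reals^X = \reals^{\aleph_0}$, which is metrizable as a countable product of metrizable spaces, $C_p(X)$ is second countable and metrizable.

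Next I would verify the two hypotheses needed to invoke Proposition~\ref{prop:gspace} with the ambient $g$-space taken to be $C_p(X)$. For the $g$-space condition, note that $X$ is trivially countably tight (every subset of $X$ is countable, so for any $A \subseteq X$ and $x \in \clsr{A}$ one may simply take $B = A$), so Theorem~\ref{thm:stightwgroth} yields that $X$ is weakly \groth, i.e.~$C_p(X)$ is a $g$-space. For the second hypothesis, since $C_p(X)$ is metrizable, so is every one of its subspaces; in particular every compact subspace of $C_p(X)$ is metrizable, and metrizable spaces are \frech. Proposition~\ref{prop:gspace} then upgrades ``$g$-space'' to ``hereditary $g$-space,'' which is precisely the assertion that $X$ is \groth.

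There is in fact an even more elementary route that bypasses Theorem~\ref{thm:stightwgroth} and Proposition~\ref{prop:gspace} entirely, and this is the one step I would flag as meriting a sentence of care. Because $C_p(X)$ is metrizable, every subspace $M$ of it is metrizable and hence normal, so if $A \subseteq M$ is countably compact in $M$ then $\clsr{A}$ is countably compact by Proposition~\ref{clsrcountablycmpct}; in a metrizable space countable compactness coincides with compactness (paracompact countably compact spaces are compact), so $\clsr{A}$ is compact and $A$ is relatively compact. As this holds for every subspace, $C_p(X)$ is a hereditary $g$-space outright. There is essentially no genuine obstacle in either version: the only points requiring a moment's attention are the passage from countability of $X$ to metrizability of $C_p(X)$ via Theorem~\ref{thm:crds}, and the standard equivalence of countable compactness with compactness in the metrizable setting.
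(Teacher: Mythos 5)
Your proposal is correct, and your second, ``more elementary'' route is essentially the paper's own proof: the paper likewise uses Theorem~\ref{thm:crds} to give $C_p(X)$ a countable base, notes that a second countable space is separable metrizable and hence normal, applies Proposition~\ref{clsrcountablycmpct} to get a countably compact closure, and then concludes compactness --- the only cosmetic differences being that the paper finishes by observing that $C_p(X)$ is \lind (countably compact plus \lind implies compact) where you invoke paracompactness of metrizable spaces, and that the paper handles heredity by noting all subspaces of $C_p(X)$ are \frech and that a countable base passes to subspaces, rather than running the normality argument inside each subspace $M$ as you do; these variants are interchangeable. Your first route is genuinely different at one point: you obtain the $g$-space property of $C_p(X)$ from Theorem~\ref{thm:stightwgroth} via the trivial countable tightness of $X$, whereas the paper verifies it directly. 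That is legitimate, but Theorem~\ref{thm:stightwgroth} is stated in the paper without proof (it is quoted from \cite{Arhangelskii1997a} and \cite{TallGrot}), so this route trades a three-line elementary verification for a black box considerably deeper than the countable case requires. Your second route avoids this entirely and is the one to prefer here, since the whole point of proving the countable case separately in the paper is that it is easy and self-contained.
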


\begin{proof}
	By Theorem \ref{thm:crds}, if $X$ is countable, $C_p(X)$ has a countable base. It follows easily that all subspaces of $C_p(X)$ are \frech, so we need only show that a space with a countable base is a $g$-space. A space with a countable base is separable metrizable, so normal, so by Proposition \ref{clsrcountablycmpct}, if $A$ is countably compact in $C_p(X)$, then $\clsr{A}$ is countably compact. But $C_p(X)$ is \lind, so $\clsr{A}$ is compact.
\end{proof}

\subsection{Grothendieck Spaces and Double Limit Conditions}
The relationship of \groth's Theorem with double limit conditions was explored by V.~Pták \cite{Ptak1963}:

\begin{thm}
	Let $X$ be compact and $A \subseteq C_p(X)$ pointwise bounded. Then $A$ is relatively compact in $C_p(X)$ if and only if the following double limit condition holds: whenever $\seq{x}{n}{n < \omega}$ is a sequence in $X$ and $\seq{f}{m}{m<\omega}$ is a sequence in $A$, the double limits $\lim_{n \to \infty}\lim_{m \to \infty}f_m(x_n)$ and $\lim_{m \to \infty}\lim_{n \to \infty}f_m(x_n)$ are equal whenever they exist.
\end{thm}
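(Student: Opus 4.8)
The plan is to prove the two implications separately, handling the forward direction by a cluster‑point argument inside the two compact sets at our disposal, and the converse by adapting the recursive construction from the proof of \groth's Theorem above to manufacture sequences that violate the double limit condition.

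For the forward direction, suppose $A$ is relatively compact in $C_p(X)$, so that $\clsr{A}$ computed in $C_p(X)$ is compact, and let $\seq{x}{n}{n<\omega}$ and $\seq{f}{m}{m<\omega}$ be as in the statement with both iterated limits existing. First I would pass to a cluster point $f \in \clsr{A} \subseteq C_p(X)$ of $\seq{f}{m}{m<\omega}$, which exists by compactness; since $f(x_n)$ is then a cluster value of the convergent real sequence $\langle f_m(x_n)\rangle_{m<\omega}$, we get $f(x_n) = \lim_{m\to\infty}f_m(x_n)$ for every $n$. Next, using that $X$ is compact, I would take a cluster point $x$ of $\seq{x}{n}{n<\omega}$. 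Continuity then does the rest. Because $f$ is continuous, $f(x)$ is a cluster value of $\langle f(x_n)\rangle_{n<\omega} = \langle \lim_m f_m(x_n)\rangle_{n<\omega}$, which converges, whence $f(x) = \lim_{n\to\infty}\lim_{m\to\infty} f_m(x_n)$; and because each $f_m$ is continuous, $f_m(x) = \lim_{n\to\infty} f_m(x_n)$, so $f(x)$, being a cluster value of the convergent sequence $\langle f_m(x)\rangle_{m<\omega}$, equals $\lim_{m\to\infty}\lim_{n\to\infty} f_m(x_n)$. Hence the two iterated limits coincide, both being $f(x)$.

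For the converse I would argue by contraposition. Since $A$ is pointwise bounded, $\clsr{A}$ computed in $\reals^X$ is a closed subset of $\prod_{x\in X}[-M_x, M_x]$, hence compact; and if this closure were contained in $C_p(X)$, it would equal the closure of $A$ in $C_p(X)$, making $A$ relatively compact there. So it suffices to show that the double limit condition forces every $g$ in the $\reals^X$-closure of $A$ to be continuous (alternatively one may reduce to countable compactness of $A$ in $C_p(X)$ via \groth's Theorem, as $X$ compact is countably compact). Suppose then that some such $g$ is discontinuous; exactly as in the proof of \groth's Theorem, fix $\epsi > 0$ and a point $y$ in the closure of $Y = X \setminus g^{-1}[(g(y)-\epsi, g(y)+\epsi)]$, and run the same recursion to build neighbourhoods $U_n \ni y$ with $\clsr{U_{n+1}} \subseteq U_n$, points $x_n \in U_n \insect Y$, and functions $f_m \in A$. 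I would tune the approximations so that: (a) $f_m \to g$ pointwise on $\{x_n\}_{n<\omega}$, giving $\lim_{m} f_m(x_n) = g(x_n)$ for each $n$; (b) on $U_n$ every earlier $f_k$ (for $k \le n$) differs from $f_k(y)$ by less than $2^{-n}$, so that $\lim_{n} f_m(x_n) = f_m(y)$ for each $m$; and (c) $f_m(y) \to g(y)$. Conditions (b) and (c) yield $\lim_{m}\lim_{n} f_m(x_n) = g(y)$, while condition (a) yields $\lim_{n}\lim_{m} f_m(x_n) = \lim_{n} g(x_n)$; as $x_n \in Y$ forces $\abs{g(x_n) - g(y)} \ge \epsi$, the two values differ, contradicting the double limit condition.

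The main obstacle is precisely the existence of the iterated limit $\lim_{n} g(x_n)$ in the converse: the trapping neighbourhoods deliver the value $g(y)$ for the order $\lim_m\lim_n$ very cleanly, but for the reversed order I only control that $g(x_n)$ stays at distance at least $\epsi$ from $g(y)$, not that it converges. I would handle this by passing to a subsequence of the $x_n$ along which $g(x_n)$ converges; this is the point at which boundedness of the relevant values is genuinely used, so some care (or a uniform bound, as holds automatically in the closed subspaces $C_p(X,[0,1])$ and $C_p(X,2)$ relevant to the applications) is needed to keep this limit finite. Once $\lim_{n} g(x_n)$ is secured, the contradiction is immediate and the theorem follows.
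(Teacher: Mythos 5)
Your forward direction is correct, and it does not even need pointwise boundedness: a cluster point $f$ of $\langle f_m : m<\omega\rangle$ in the compact set $\clsr{A}\subseteq C_p(X)$ and a cluster point $x$ of $\langle x_n : n<\omega\rangle$ in the compact $X$ force both iterated limits to equal $f(x)$, since a convergent sequence of reals has its limit as its unique cluster value. Note that there is no proof in the paper to compare yours against: the paper explicitly omits Pt\'ak's argument (which runs through a combinatorial result on convex means) and proves only the ultralimit analogue, Theorem \ref{thm:doublelimit}, and that one for $C_p(X,[0,1])$. Your plan of re-running the recursion from the proof of \groth's Theorem is therefore a reasonable alternative route, and the recursion itself does deliver the properties (a)--(c) you list. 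The problem is the step you flag at the end, and it is a genuine gap, not a matter of ``some care.''

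Under the stated hypotheses the missing step cannot be supplied. Pointwise boundedness only gives $\abs{g(x_n)}\le M_{x_n}$, a bound depending on the point, and nothing prevents $g$ from being unbounded on $Y$ near $y$; in that case every admissible choice of $x_n\in U_n\insect Y$ has $g(x_n)\to\infty$, no subsequence of $\langle g(x_n) : n<\omega\rangle$ has a finite limit, and your contradiction never materializes. Indeed, if ``exist'' is read as ``exist in $\reals$,'' the converse implication is simply false for merely pointwise bounded families: take $X=[0,1]$ and $f_m(x)=1/x$ for $x\in[1/m,1]$, $f_m(x)=m^2x$ for $x\in[0,1/m]$. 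Then $A=\st{f_m}{m\ge 1}$ is pointwise bounded ($f_m(0)=0$ and $\sup_m f_m(x)=1/x$ for $x>0$) and converges pointwise to the discontinuous $g$ given by $g(0)=0$, $g(x)=1/x$ for $x>0$, so by your own reduction $A$ is not relatively compact in $C_p(X)$; yet $A$ satisfies the finite double limit condition, because the only sequences that could detect the discontinuity have $x_n\to 0$ with $x_n>0$, and for those the inner limits $\lim_m f_m(x_n)=1/x_n$ tend to $+\infty$, so the outer limit never exists in $\reals$ (all other configurations reduce either to finitely many continuous functions or to points staying in some $[\delta,1]$, where the two iterated limits agree). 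Consequently you must either interpret the double limits in $[-\infty,+\infty]$ --- then your subsequence repair is immediate, since some subsequence has $g(x_{n_k})\to L\in[-\infty,+\infty]$, and $\abs{g(x_{n_k})-g(y)}\ge\epsi$ forces $L\ne g(y)$ --- or strengthen the hypothesis to uniform boundedness (the setting of Pt\'ak and of Grothendieck's original criterion), where Bolzano--Weierstrass gives a finite such $L$. This existence problem is exactly what the paper's own Theorem \ref{thm:doublelimit} engineers away by working with $[0,1]$-valued functions and with ultralimits, which always exist when the values lie in a compact set.
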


Pták's proof involves a combinatorial result on convex means which we omit, since we are only interested in the analogue of Pták's Theorem for \emph{ultralimits} and without compactness assumptions; see Theorem \ref{thm:doublelimit}. We shall frequently deal with ultralimits in what follows.

\begin{defi}
	Let $X$ be a topological space. Given an ultrafilter $\mcal{U}$ on a regular cardinal $\kappa$, and a $\kappa$-sequence $\seq{x}{\alpha}{\alpha < \kappa}$ in $X$, we say that
	\begin{linenomath*} 
	\[\lim_{\alpha \to \mcal{U}}x_\alpha = x\]
	\end{linenomath*} 
	if and only if for every open neighbourhood $U$ of $x$, $\st{\alpha < \kappa}{x_\alpha \in U} \in \mcal{U}$.
\end{defi}

It follows from the fact that an ultrafilter cannot contain disjoint sets that if an ultralimit exists in a Hausdorff space, it is unique. The following result relates ultralimits and compactness:

\begin{prop}\label{ultracomp}
	A space $X$ is compact if and only if every ultralimit in $X$ exists.
\end{prop}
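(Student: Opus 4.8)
The plan is to prove both implications by relating the $\mcal{U}$-limit of a $\kappa$-sequence to the behaviour of ultrafilters on the underlying set $X$ against the family of closed sets of $X$. The definition's restriction of index sets to regular cardinals matters only in the harder direction.

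For the forward direction (compact $\Rightarrow$ every ultralimit exists), I would fix a regular cardinal $\kappa$, an ultrafilter $\mcal{U}$ on $\kappa$, and a $\kappa$-sequence $\seq{x}{\alpha}{\alpha<\kappa}$, and for each $A\in\mcal{U}$ set $C_A=\clsr{\st{x_\alpha}{\alpha\in A}}$. The family $\st{C_A}{A\in\mcal{U}}$ consists of closed sets with the finite intersection property: given $A_1,\dots,A_n\in\mcal{U}$, their intersection again lies in $\mcal{U}$, hence is nonempty, and any index $\beta$ in it gives $x_\beta\in\bigcap_i C_{A_i}$. By compactness there is a point $x\in\Insect_{A\in\mcal{U}}C_A$, and this $x$ is the ultralimit: if some open $U\ni x$ had $\st{\alpha}{x_\alpha\in U}\notin\mcal{U}$, then by maximality of $\mcal{U}$ the complement $A=\st{\alpha}{x_\alpha\notin U}$ would lie in $\mcal{U}$, forcing $C_A\subseteq X\setminus U$ and so $x\notin C_A$, a contradiction. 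This standard closed-set/FIP argument handles every regular $\kappa$ uniformly.

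For the reverse direction (every ultralimit exists $\Rightarrow$ compact) I would argue by contraposition. Assuming $X$ is not compact, fix an open cover $\st{V_s}{s\in S}$ with no finite subcover; the complements $F_s=X\setminus V_s$ are closed with the finite intersection property (an empty finite intersection would yield a finite subcover), so they generate a proper filter on $X$ which I extend to an ultrafilter $\mcal{W}$. This $\mcal{W}$ has no limit: if every neighbourhood of some $x$ were in $\mcal{W}$, choosing $s$ with $x\in V_s$ would place both $V_s$ and $F_s$ in $\mcal{W}$, which is impossible. To realize $\mcal{W}$ as an ultralimit in the sense of the definition, I would pick a regular cardinal $\kappa\ge\crd{X}$ (e.g.\ $\kappa=\crd{X}^+$) and a surjection $q:\kappa\to X$; surjectivity gives that $\st{q^{-1}(A)}{A\in\mcal{W}}$ has the finite intersection property, generating a proper filter on $\kappa$ that extends to an ultrafilter $\mcal{U}$. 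Since both $\mcal{W}$ and the pushforward $\st{A\subseteq X}{q^{-1}(A)\in\mcal{U}}$ are ultrafilters, and the latter contains the former, they coincide. Then the $\kappa$-sequence $x_\alpha=q(\alpha)$ satisfies $\lim_{\alpha\to\mcal{U}}x_\alpha=x$ exactly when $q^{-1}(U)\in\mcal{U}$ for every open $U\ni x$, i.e.\ exactly when $\mcal{W}$ has $x$ as a limit; as $\mcal{W}$ has none, this ultralimit fails to exist, contradicting the hypothesis.

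The forward direction is routine. The real obstacle in the reverse direction is bookkeeping rather than topology: ultralimits are only defined for sequences indexed by a \emph{regular} cardinal, whereas the natural non-convergent object is an arbitrary ultrafilter $\mcal{W}$ on $X$, and $\crd{X}$ need not be regular. The step I expect to need the most care is the pushforward construction through the surjection $q:\kappa\to X$ with $\kappa$ regular and $\kappa\ge\crd{X}$, checking that surjectivity supplies the finite intersection property and that ultrafilter maximality forces $q_*\mcal{U}=\mcal{W}$, so that the absence of a limit for $\mcal{W}$ transfers exactly to the absence of a $\mcal{U}$-limit for the sequence.
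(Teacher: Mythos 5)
Your proof is correct, and while your forward direction is just the dual form of the paper's argument (you run the finite-intersection-property of the closed sets $\overline{\{x_\alpha : \alpha \in A\}}$, $A \in \mathcal{U}$, where the paper takes the open cover by neighbourhoods witnessing non-convergence and extracts a finite subcover — the two are routine mirror images), your reverse direction takes a genuinely different route. The paper picks an open cover $\{U_\alpha\}_{\alpha<\kappa}$ with no finite subcover where $\kappa$ is \emph{minimal} with this property, recursively chooses $x_\lambda \notin \bigcup_{\alpha<\lambda}U_\alpha$ (possible by minimality), observes that each point has a neighbourhood containing fewer than $\kappa$ of the $x_\lambda$'s, and extends the resulting co-small filter on $\kappa$ to an ultrafilter with no limit; this is more constructive and keeps the index set at the smallest relevant cardinal, but it silently relies on the fact that this minimal $\kappa$ is regular (true, but it requires an argument the paper omits, since the definition of ultralimit demands a regular index). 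You instead build an ultrafilter $\mathcal{W}$ on $X$ itself from the complements of the cover, then transfer it to the manifestly regular cardinal $|X|^+$ via a surjection $q$ and the pushforward identity $q_*\mathcal{U}=\mathcal{W}$; this cleanly discharges the regularity bookkeeping (a point your write-up rightly flags as the crux) at the cost of a longer index set and a more abstract transfer step. Note also that you do not even need the full equality $q_*\mathcal{U}=\mathcal{W}$: the containment $\mathcal{W}\subseteq q_*\mathcal{U}$ already gives, for each $x$ and any $V_s\ni x$, that $q^{-1}(F_s)\in\mathcal{U}$ and hence $\{\alpha : x_\alpha\in V_s\}\notin\mathcal{U}$, so the maximality argument can be dropped.
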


\begin{proof}
	Suppose $X$ is compact and assume for contradiction that there is an ultrafilter $\mcal{U}$ over $\kappa$ such that $\lim_{\alpha \to \mcal{U}}X_\alpha$ does not exist. Then for each $x \in X$ there is an open neighbourhood $U_x$ of $x$ such that $\st{\alpha < \kappa}{x_\alpha \in U_x} \notin \mcal{U}$. Since $X$ is compact, the open cover $\st{U_x}{x \in X}$ has a finite subcover $\{U_{x_0}, \ldots, U_{x_{n-1}}\}$. Since $\Union_{i < n}U_{x_n} = X$, we must have $\st{\alpha < \kappa}{x_\alpha \in U_{x_i}} \in \mcal{U}$ for at least one $i < n$, a contradiction.

	Conversely, suppose $X$ is not compact. Let $\{U_\alpha\}_{\alpha<\kappa}$ be an open cover that does not have a finite subcover. Let $\kappa$ be the minimal cardinal with that property. Then for each cardinal $\lambda < \kappa$, $\{U_\alpha : \alpha < \lambda\}$ is not a cover, so we can pick an $x_\lambda \notin \Union\{U_\alpha : \alpha < \lambda\}$. Then each $x \in X$ has a neighbourhood $N_x$ that contains fewer than $\kappa$ many $x_\lambda$'s. Then $\left\{\{\alpha : x_\alpha\notin N_x\} : x \in X\right\}$ forms a filter on $\kappa$, which can be extended to an ultrafilter $\mcal{U}$. Then $\mcal{U}$ has no ultralimit, for given any $x \in X$, $\{\alpha : x_\alpha \notin N_x\} \in \mcal{U}$, so $\{\alpha : x_\alpha \in N_x\} \notin \mcal{U}$.
\end{proof}

In the following sections we shall present generalizations of the main results of \cite{Casazza} and \cite{CDI} by working with \groth spaces instead of with compact spaces. For this purpose we introduce some definitions which are different from the ones in \cite{Casazza} but identical with the added assumption of compactness.

Thorough introductions to continuous logic can be found in \cite{Eagle2014}, \cite{Eagle2015} and \cite{BenYaacov2008}. In \cite{Hamel}, the authors introduced in detail a setting for continuous logic, mostly following \cite{Casazza} and \cite{Eagle2015}.

\section{Stability, Definability, and Double (Ultra)limit Conditions}
It is worth noting that there are different definitions of continuous logics in the literature. However, most authors differ only slightly from one another. We shall now introduce metric structures following Eagle \cite{Eagle2014} and \cite{Eagle2015} as this is the context we work in:

\begin{defi}\mbox{}
\begin{enumerate}[label=(\alph*)]
	\item If $(M,d)$ and $(N,\rho)$ are metric spaces and $f\colon M^n\to N$ is uniformly continuous, a \textit{modulus of uniform continuity of} $f$ is a function 
    $\delta \colon (0,1)\cap{\mathbb{Q}} \to (0,1)\cap{\mathbb{Q}}$ such that whenever $\boldsymbol{a}=(a_1,...,a_n),\ \boldsymbol{b}=(b_1,...,b_n)\in M^n$ and $\varepsilon \in (0,1)\ \cap\ {\mathbb{Q}}$, $\sup\{d(a_i,b_i) : 1\leq i \leq n\}<\delta(\varepsilon)$ implies
    $\rho(f(\boldsymbol{a}),f(\boldsymbol{b}))<\varepsilon$.

	\item A \emph{language} for metric structures is a set $L$ which consists of constants, functions with an associated arity, and a modulus of uniform continuity; predicates with an associated arity and a modulus of uniform continuity; and a symbol $d$ for a metric.

	\item An \emph{$L$-metric structure} $\mfrak{M}$ is a metric space $(M, d^M)$ together with interpretations for each symbol in $L$: $c^{\mfrak{M}} \in M$ for each constant $c \in L$; $f^{\mfrak{M}}: M^n \to M$ a uniformly continuous function for each $n$-ary function symbol $f \in L$; $P^{\mfrak{M}}: M^n \to [0, 1]$ a uniformly continuous function for each $n$-ary predicate symbol $P \in L$. Assume for the sake of notation simplicity that all metric structures have diameter $1$. 

	\item The space of $L$-structures in a given language $L$ is the family $\mathrm{Str}(L)$ of all $L$-metric structures endowed with the topology generated by the following basic closed sets: $[\vphi] = \st{\mfrak{M}}{\mfrak{M} \ent \vphi}$ where $\vphi$ is an $L$-sentence.
\end{enumerate}
\end{defi}

\begin{rmk*}
	The continuous first-order formulas are constructed just as in the discrete setting except for the following addition: if $f: [0, 1]^n \to [0, 1]$ is a continuous function and $\vphi_0, \ldots, \vphi_{n-1}$ are $L$-formulas, then $f(\vphi_0, \ldots, \vphi_{n-1})$ is also an $L$-formula. It is customary to identify formulas $\vphi$ of arity $n$ with functions $\mf{M}^n \to [0, 1]$. This allows an easy way to define the satisfaction relation, i.e.~if $\vphi$ is an $L$-formula, $\mf{M}$ an $L$-structure, and $a \in \mf{M}^n$, then $\mf{M} \ent \vphi(a)$ if and only if $\vphi(a) = 1$. For example, under this identification, the conjunction, $\vphi \lnd \psi$, of two $L$-formulas $\vphi$ and $\psi$ is $\max(\vphi, \psi)$. In what follows, we shall introduce other continuous logics and study Gowers' problem in them; our main example is continuous $\mcal{L}_{\omega_1, \omega}$ as introduced by Eagle \cite{Eagle2014}, \cite{Eagle2015}. See \cite{Eagle2014}, \cite{Eagle2015} and \cite{Hamel} for a detailed exposition.
\end{rmk*}

A key technical concept introduced in \cite{Casazza} and used to talk about a pair of Banach space norms is the following:

\begin{defi}\label{def:pairlang}\mbox{}
\begin{enumerate}[label=(\alph*)]
	\item Let $L$ be a language $L' \supseteq L$ is a \emph{language for pairs of structures from $L$}, if $L'$ includes two disjoint copies of $L$ and there is a map $\mr{Str}(L) \times \mr{Str}(L) \to \mr{Str}(L')$ which assigns to every pair of $L$-structures $(\mf{M}, \mf{N})$ an $L'$-structure $\langle\mf{M}, \mf{N} \rangle$.

	\item Let $L'$ be a language for pairs of structures from $L$, and $X, Y$ function symbols from $L$. We say that a formula $\vphi(X, Y)$ is a \emph{formula for pairs of structures from $L$} if
	\begin{linenomath*}  
	\[(\mf{M}, \mf{N}) \mapsto \vphi(X^{\mf{M}}, Y^{\mf{N}}) = \mr{Val}\left(\vphi(X, Y), \langle\mf{M}, \mf{N} \rangle\right)\]
	\end{linenomath*} 
	is separately continuous on $\mr{Str}(L) \times \mr{Str}(L)$. For simplicity, we write $\vphi(\mf{M}, \mf{N})$ instead of $\mr{Val}\left(\vphi(X, Y), \langle\mf{M}, \mf{N} \rangle\right)$.
\end{enumerate}
\end{defi}

In this section we shall only be interested in $\vphi$-types when $\vphi$ is a formula for pairs of structures:

\begin{defi}
	Let $L$ be a language, $\vphi$ an $L$-formula for pairs of structures, and $\mf{M} \in \mr{Str}(L)$. The \emph{left $\vphi$-type of $\mf{M}$} is the function $\mr{ltp}_{\vphi, \mf{M}}: \mr{Str}(L) \to [0, 1]$ given by $\mr{ltp}_{\vphi, \mf{M}}(\mf{N}) = \vphi(\mf{M}, \mf{N})$.

	The \emph{space of left $\vphi$-types}, denoted $S^{l}_{\vphi}$ is the closure of $\st{\mr{ltp}_{\vphi, \mf{M}}}{\mf{M} \in \mr{Str}(L)}$ in $C_p(\mr{Str}(L))$. If $C$ is a subset of $\mr{Str}(L)$, then $S^{l}_{\vphi}(C)$ is the closure of $\st{\rstrct{\mr{ltp}_{\vphi, \mf{M}}}{C}}{\mf{M} \in C}$ in $C_p(C)$ and it is called the \emph{space of left $\vphi$-types over $C$}.

	The definitions for \emph{right $\vphi$-types} are analogous.
\end{defi}

\begin{rmk*}
	Notice that the previous definition coincides with the one given in \cite{Casazza} for the compact case: if the logic is compact, that is to say $\mr{Str}(L)$ is compact, then $S^l_{\vphi}$ turns out to be the closure of $\st{\mr{ltp}_{\vphi, \mf{M}}}{\mf{M} \in \mr{Str}(L)}$ in $[0, 1]^{\mr{Str}(L)}$, which is then also compact. Since compact spaces are \groth, we have that $S^l_{\vphi}$ is always Fr\'echet-Urysohn when the logic is compact---see Proposition \ref{prop:gspace}.
\end{rmk*}

The notion of model-theoretic stability has been extensively studied in discrete model theory, whereas works like \cite{Casazza}, \cite{CDI} or \cite{Hamel} are devoted to understanding stability in the continuous setting. Among the multiple equivalent definitions of stability, we have found the one introduced in \cite{Iovino1999} involving a \emph{double limit condition} is the most suitable for the continuous setting, even when it comes to non-compact logics:

\begin{defi}
	Let $L$ be a language for pairs of structures, $\vphi$ a formula for pairs of structures, and $C \subseteq \mr{Str}(L)$. We say that $\vphi$ is \emph{stable on $C$} if and only if whenever $\seq{\mf{M}}{i}{i<\omega}$ and $\seq{\mf{N}}{j}{j<\omega}$ are sequences in $C$, and $\mcal{U}$ and $\mcal{V}$ are ultrafilters on $\omega$, we have
	\begin{linenomath*} 
	\[\lim_{i \to \mcal{U}}\lim_{j \to \mcal{V}}\vphi(\mf{M}_i, \mf{N}_j) = \lim_{j \to \mcal{V}}\lim_{i \to \mcal{U}}\vphi(\mf{M}_i, \mf{N}_j).\]
	\end{linenomath*} 
\end{defi}

\begin{rmk*}
Notice that $\vphi(\mf{M}_i, \mf{N}_j)$ is always a number in $[0,1]$ and so the ultralimits always exist by Proposition \ref{ultracomp}.
\end{rmk*}

We now state the topological version of one of our main results, which varies the one that appears in \cite{Hamel} by replacing ``weakly Grothendieck'' by ``Grothendieck'' and ``first countable'' by ``countably tight'':

\begin{thm}
\label{thm:doublelimit}
	Let $X$ be \groth and countably tight. A subset $A$ of $C_p(X, [0, 1])$ is relatively compact in $C_p(X, [0, 1])$ if and only if it satisfies the following double limit condition: for every pair of sequences $\seq{f}{n}{n < \omega} \subseteq A$ and $\seq{x}{m}{m < \omega} \subseteq X$, and ultrafilters $\mcal{U}$ and $\mcal{V}$ on $\omega$, the double limits
	\begin{linenomath*} 
	\[\lim_{n \to \mcal{U}}\lim_{m \to \mcal{V}}f_n(x_m) = \lim_{m \to \mcal{V}}\lim_{n \to \mcal{U}}f_n(x_m)\]
	\end{linenomath*} 
	agree whenever $\lim_{m \to \mcal{V}}x_m$ exists.
\end{thm}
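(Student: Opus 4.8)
The plan is to prove the two implications separately, exploiting throughout that $[0,1]^X$ is compact (so by Proposition \ref{ultracomp} every ultralimit of a sequence of $[0,1]$-valued functions exists) and the recurring principle that ultralimits commute with continuous maps: if $h$ is continuous and $y=\lim_{\alpha\to\mcal{W}}y_\alpha$, then $h(y)=\lim_{\alpha\to\mcal{W}}h(y_\alpha)$. The Grothendieck and countable-tightness hypotheses will be needed only in the harder direction.

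For the easy direction, suppose $A$ is relatively compact, so $\clsr{A}$ is compact in $C_p(X,[0,1])$. Given $\seq{f}{n}{n<\omega}\subseteq A$, $\seq{x}{m}{m<\omega}\subseteq X$, ultrafilters $\mcal{U},\mcal{V}$, and $x=\lim_{m\to\mcal{V}}x_m$, I would first set $f=\lim_{n\to\mcal{U}}f_n$, which exists and lies in $\clsr{A}$, hence is continuous, by Proposition \ref{ultracomp}. Then I compute each iterated limit and show both equal $f(x)$: the inner limit $\lim_{m\to\mcal{V}}f_n(x_m)=f_n(x)$ by continuity of $f_n$, and $\lim_{n\to\mcal{U}}f_n(x)=f(x)$ because evaluation at $x$ is continuous; symmetrically $\lim_{n\to\mcal{U}}f_n(x_m)=f(x_m)$ by continuity of evaluation at $x_m$, and $\lim_{m\to\mcal{V}}f(x_m)=f(x)$ by continuity of $f$. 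So both double limits equal $f(x)$ and agree. This uses neither hypothesis.

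For the hard direction I would show $A$ is countably compact in $C_p(X,[0,1])$ and then invoke the Grothendieck hypothesis. Since $C_p(X,[0,1])$ is a closed subspace of the hereditary $g$-space $C_p(X)$, it is itself a $g$-space, so countable compactness of $A$ yields its relative compactness. To prove countable compactness, take an infinite subset of $A$, list distinct members $\seq{f}{n}{n<\omega}$, fix a nonprincipal ultrafilter $\mcal{U}$ on $\omega$, and form $g=\lim_{n\to\mcal{U}}f_n$ in the compact space $[0,1]^X$. It then suffices to show $g$ is continuous, for then $g\in C_p(X,[0,1])$ and, since every neighbourhood $W$ of $g$ satisfies $\st{n}{f_n\in W}\in\mcal{U}$ and is thus infinite, $g$ is a limit point of the infinite set.

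The main obstacle, and the crux of the theorem, is deriving continuity of $g$ from the double limit condition. Suppose not; then $g$ is discontinuous at some $x$, and since the set $\st{y}{\abs{g(y)-g(x)}\ge\epsi}$ is the union of its two one-sided pieces (whose closures cover its closure), I may assume there is $\epsi>0$ with $x\in\clsr{Y}$ for $Y=\st{y\in X}{g(y)\ge g(x)+\epsi}$. Here countable tightness of $X$ is essential: it replaces the possibly large $Y$ by a countable $B=\st{y_m}{m<\omega}\subseteq Y$ with $x\in\clsr{B}$, making the double limit condition---which speaks only of sequences---applicable. I then produce an ultrafilter $\mcal{V}$ on $\omega$ with $\lim_{m\to\mcal{V}}y_m=x$, by extending the filter generated by the sets $\st{m}{y_m\in V}$ for $V$ an open neighbourhood of $x$ (these have the finite intersection property precisely because $x\in\clsr{B}$). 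Applying the double limit condition to $\seq{f}{n}{n<\omega}$, $\seq{y}{m}{m<\omega}$, $\mcal{U}$, $\mcal{V}$ and evaluating with $g$: the left-hand iterated limit is $\lim_{n\to\mcal{U}}f_n(x)=g(x)$ (inner limit $f_n(x)$ by continuity of $f_n$), while the right-hand one is $\lim_{m\to\mcal{V}}g(y_m)\ge g(x)+\epsi$, since each $g(y_m)\ge g(x)+\epsi$ and ultralimits respect non-strict inequalities. This forces $g(x)\ge g(x)+\epsi$, a contradiction. Hence $g$ is continuous, completing the argument.
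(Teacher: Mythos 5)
Your proof is correct and follows essentially the same route as the paper's: the easy direction via continuity of the ultralimit in the compact closure, and the hard direction via the Grothendieck ($g$-space) property reducing relative compactness to countable compactness, with countable tightness supplying a countable set whose trace filter extends to the ultrafilter $\mcal{V}$ used to contradict the double limit condition. The only differences are cosmetic: you argue the hard direction directly (showing every ultralimit of a sequence from $A$ is continuous) rather than by contrapositive, and you make the one-sided refinement of the discontinuity set explicit where the paper works with $\abs{g(y)-g(x_m)} \geq \epsi$.
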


\begin{proof}
	Let $\clsr{A}$ denote the closure of $A$ in $[0, 1]^X$. Suppose $\clsr{A} \insect C_p(X)$ is not compact in $C_p(X)$. Then $\clsr{A} \insect C_p(X)$ is closed but it is not countably compact in $C_p(X)$, since $X$ is \groth. Let $\seq{f}{n}{n < \omega}$ be a sequence in $A$ with no limit points in $\clsr{A} \insect C_p(X)$. Since $\clsr{A}$ is a compact subset of $[0, 1]^X$, each ultralimit of the sequence exists, is a limit point, and is therefore discontinuous. Take a non-principal ultrafilter $\mcal{U}$ over $\omega$ and let $\lim_{n \to \mcal{U}}f_n = g$, where $g$ is discontinuous by assumption. Then there are $\epsi>0$ and $y \in X$ such that $y \in \clsr{Y}$, where
	\begin{linenomath*}  
	\[Y = X \setminus g^{-1}\left(g(y)-\epsi, g(y)+\epsi\right).\]
	\end{linenomath*} 
	Since $t(X) = \aleph_0$, there is some $Z \subseteq Y$ with $\crd{Z} = \aleph_0$ and $y \in \clsr{Z}$. Suppose $Z = \seq{z}{n}{n < \omega}$ and for each open neighbourhood $U$ of $y$, let $M_U = \st{n < \omega}{z_n \in U}$. Clearly, the family of all $M_U$ is centred and so it can be extended to an ultrafilter $\mcal{V}$ on $\omega$ so that
	\begin{linenomath*} 
	\[\lim_{n \to \mcal{U}}\lim_{m \to \mcal{V}}f_n(x_m) = g(y)\]
	\end{linenomath*} 
	since each $f_n$ is continuous. On the other hand,
	\begin{linenomath*} 
	\[\lim_{m \to \mcal{V}}\lim_{n \to \mcal{U}}f_n(x_m) = \lim_{m \to \mcal{V}}g(x_m)\]
	\end{linenomath*} 
	exists by compactness of $[0, 1]$. However, by the choice of each $x_m$, we have $\abs{g(y) - g(x_m)} > \epsi$ and so the ultralimits exist but are different, a contradiction.

	Conversely, suppose $\clsr{A} \insect C_p(X)$ is compact and $\lim_{m \to \mcal{V}}x_m = y$. Then for any sequence $\seq{f}{n}{n < \omega} \subseteq A$ and ultrafilter $\mcal{U}$ on $\omega$, there is a continuous function $g = \lim_{n \to \mcal{U}}f_n$. Thus,
	\begin{linenomath*} 
	\[\lim_{n \to \mcal{U}}\lim_{m \to \mcal{V}}f_n(x_m) = \lim_{n \to \mcal{U}}f_n(y) g(y),\]
	\end{linenomath*} 
	and
	\begin{linenomath*} 
	\[\lim_{m \to \mcal{V}}\lim_{n \to \mcal{U}}f_n(x_m) = \lim_{m \to \mcal{V}}g(x_m) = g(y).\]
	\end{linenomath*} 
\end{proof}

The main application we consider here deals with countable fragments of $\mcal{L}_{\omega_1, \omega}$, for the corresponding space of types is Polish \cite{Morley1974}, so \groth and countably tight.

The model-theoretic version of Theorem \ref{thm:doublelimit} will relate definability to a double limit condition just as in the discrete compact case. For this purpose, we introduce the notion of definability for $\vphi$-types, where $\vphi$ is a formula for pairs of structures, following \cite{Casazza}:

\begin{defi}
	Suppose $L$ is a language for pairs of structures and $\vphi$ is a formula for pairs of structures. Let $C \subseteq \mr{Str}(L)$. A function $\tau: S^{r}_{\vphi}(C) \to [0, 1]$ is a \emph{left global $\vphi$-type over $C$} if there is a sequence $\seq{\mf{M}}{\alpha}{\alpha < \kappa} \subseteq C$, and an ultrafilter $\mcal{U}$ on $\kappa$, such that for every type $t \in S^{r}_{\vphi}(C)$, say $t = \lim_{\beta \to \mcal{V}}\mr{rtp}_{\vphi,\mf{M}}$, we have
	\begin{linenomath*} 
	\[\tau(t) = \lim_{\alpha \to \mcal{U}}\lim_{\beta \to \mcal{V}}\vphi(\mf{M}_\alpha, \mf{N}_\beta).\]
	\end{linenomath*} 
	We say that $\tau$ is \emph{explicitly definable} if it is continuous. If a left $\vphi$-type is given by $p = \lim_{\alpha \to \mcal{U}}\mr{ltp}_{\vphi,\mf{M}_\alpha}$, we say that $p$ is \emph{explicitly definable} if the respective $\tau$ is continuous.
\end{defi}

The previous definition is sound since in continuous logics the allowable formulas are continuous functions. Moreover, as with most concepts in continuous logic, such a definition reduces to the usual notion in the discrete setting. Also notice that, although our main application deals with Banach spaces, the context of the previous definition and the results in this section is that of the languages for pairs of structures rather than just Banach spaces. We proceed with the model-theoretic version of Theorem \ref{thm:doublelimit}:

\begin{thm}\label{maindlc}
	Let $L$ be a language for pairs of structures, and $\vphi$ a formula for pairs of structures. Suppose $C \subseteq \mr{Str}(L)$ is such that $S^{l}_{\vphi}(C)$ is \groth and countably tight. Then the following are equivalent:
	\begin{enumerate}[label=(\roman*)]
		\item \label{equi1} Whenever a left type over $C$ is given by $t = \lim_{i \to \mcal{U}}\mr{ltp}_{\vphi, \mf{M}_i}$, and $\seq{\mf{N}}{j}{j<\omega} \subseteq C$ is a sequence in $C$, and $\mcal{V}$ is an ultrafilter on $\omega$, then
		\begin{linenomath*} 
		\[\lim_{i \to \mcal{U}}\lim_{j \to \mcal{V}}\vphi(\mf{M}_i, \mf{N}_j) = \lim_{j \to \mcal{V}}\lim_{i \to \mcal{U}}\vphi(\mf{M}_i, \mf{N}_j).\]
		\end{linenomath*} 

		\item \label{equi2} Whenever a left type over $C$ is given by $t = \lim_{i \to \mcal{U}}\mr{ltp}_{\vphi, \mf{M}_i}$, and $\seq{\mf{N}}{j}{j<\omega} \subseteq C$ is a sequence in $C$, then
		\begin{linenomath*} 
		\[\sup_{i < j}\vphi(\mf{M}_i, \mf{N}_j) = \inf_{j < i}\vphi(\mf{M}_i, \mf{N}_j).\]
		\end{linenomath*} 

		\item \label{equi3} If $\tau$ is a left $\vphi$-type over $C$, then $\tau$ is explicitly definable.
	\end{enumerate}
\end{thm}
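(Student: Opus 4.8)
The plan is to prove the three equivalences by showing that the double limit condition in \ref{equi1} is exactly the hypothesis of Theorem \ref{thm:doublelimit} applied to $X = S^l_\varphi(C)$, and that explicit definability in \ref{equi3} is the continuity that corresponds to relative compactness via Grothendieck's machinery. The connective tissue is the identification of left types with continuous functions: a left type $\mr{ltp}_{\varphi,\mf{M}}$ lives in $C_p(\mr{Str}(L))$, and restricting to $C$ places it in $C_p(C)$; evaluating against a right type amounts to pairing a point of $S^l_\varphi(C)$ with a point of $S^r_\varphi(C)$. I would set $A = \st{\rstrct{\mr{ltp}_{\varphi,\mf{M}}}{C}}{\mf{M}\in C}$ so that $S^l_\varphi(C) = \clsr{A}$ by definition, and note that since every $\varphi$-value lies in $[0,1]$, the set $A$ sits inside $C_p(C,[0,1])$ and is automatically pointwise bounded.

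First I would prove \ref{equi1} $\Leftrightarrow$ \ref{equi3} by tying both to relative compactness of $A$. For the forward direction, \ref{equi1} is precisely the double limit condition of Theorem \ref{thm:doublelimit} with $f_n = \rstrct{\mr{ltp}_{\varphi,\mf{M}_i}}{C}$ and $x_m = \mf{N}_j$ (ranging over points of $C \subseteq S^r_\varphi(C)$, where $\lim_{j\to\mcal{V}}\mf{N}_j$ always exists because $S^r_\varphi(C)$ is compact in $[0,1]^C$); since the hypothesis gives that $S^l_\varphi(C)$ is \groth and countably tight, Theorem \ref{thm:doublelimit} yields that $A$ is relatively compact in $C_p(C,[0,1])$, i.e.\ $S^l_\varphi(C)$ is compact. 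Because $S^l_\varphi(C)$ is \groth, Proposition \ref{prop:gspace} makes it \frech, and on a compact \frech space every global type $\tau$, being an ultralimit of the continuous evaluation functionals, is realized as a genuine limit of a convergent sequence and hence is continuous---this is where I would spell out that $\tau(t) = \lim_{\alpha\to\mcal{U}}\lim_{\beta\to\mcal{V}}\varphi(\mf{M}_\alpha,\mf{N}_\beta)$ is the evaluation of a point of the compact type space, so continuity of $\tau$ follows from the double limits agreeing. Conversely, if every such $\tau$ is continuous (\ref{equi3}), then the two iterated ultralimits in \ref{equi1} compute $\tau$ in two orders and must coincide.

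Next I would handle \ref{equi1} $\Leftrightarrow$ \ref{equi2}, which is the purely combinatorial piece and the part I expect to be the \emph{main obstacle}. The identity $\sup_{i<j}\varphi(\mf{M}_i,\mf{N}_j) = \inf_{j<i}\varphi(\mf{M}_i,\mf{N}_j)$ is a reformulation of the commutation of iterated limits: the standard technique is to argue that if the double limit condition fails, one can recursively extract subsequences along the two ultrafilters so that the ``upper triangle'' values $\varphi(\mf{M}_i,\mf{N}_j)$ for $i<j$ cluster near one value while the ``lower triangle'' values for $j<i$ cluster near a different value, producing the strict gap $\sup_{i<j} \neq \inf_{j<i}$; conversely a gap in \ref{equi2} lets one read off a sequence witnessing noncommuting ultralimits. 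Making the bookkeeping between the ordered sup/inf formulation and the ultrafilter formulation precise---ensuring the recursively chosen indices are compatible with membership in $\mcal{U}$ and $\mcal{V}$---is the delicate step, and I would lean on the fact that the relevant values live in the compact interval $[0,1]$ so all the requisite limits exist.

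Finally, I would assemble the cycle. The cleanest arrangement is to prove \ref{equi1} $\Rightarrow$ \ref{equi3} $\Rightarrow$ \ref{equi1} (via the relative-compactness argument above) together with \ref{equi1} $\Leftrightarrow$ \ref{equi2}, giving all three equivalent. Throughout, the two standing hypotheses---that $S^l_\varphi(C)$ is \groth and countably tight---are used exactly once each, at the single invocation of Theorem \ref{thm:doublelimit}, so the model-theoretic content reduces entirely to that topological statement plus the dictionary translating types into points of $C_p$ and global types into evaluation functionals. I expect the sup/inf manipulation in \ref{equi2} to require the most care, while the topological equivalence with \ref{equi1} and the definability rephrasing in \ref{equi3} should follow formally from the results already established.
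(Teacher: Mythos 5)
There is a genuine gap, and it is located exactly where you invoke the standing hypotheses. Theorem \ref{thm:doublelimit} requires that the \emph{domain} of the functions---the space $X$ with $A\subseteq C_p(X,[0,1])$---be \groth and countably tight. In your application the functions are the restricted left types $f_i=\rstrct{\mr{ltp}_{\vphi,\mf{M}_i}}{C}$, whose domain is $C$ (equivalently, you let the points $x_m$ range over $C\subseteq S^r_\vphi(C)$). But the hypothesis of Theorem \ref{maindlc} concerns $S^l_\vphi(C)$, which sits on the \emph{other} side of the duality: it is a subspace of $C_p(C)$, namely the closure of your set $A$. Nothing in the hypotheses makes $C$, or $S^r_\vphi(C)$, \groth or countably tight, so the step ``Theorem \ref{thm:doublelimit} yields that $A$ is relatively compact in $C_p(C,[0,1])$, i.e.\ $S^l_\vphi(C)$ is compact'' is unjustified; moreover, compactness of $S^l_\vphi(C)$ is far stronger than anything the theorem needs or establishes. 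The same domain/function-space confusion recurs twice afterwards: Proposition \ref{prop:gspace} applied to ``$S^l_\vphi(C)$ is \groth'' speaks about compact subspaces of $C_p(S^l_\vphi(C))$, so it does not make $S^l_\vphi(C)$ itself \frech (that would follow if $C$ were \groth, which is not assumed); and even granting that $S^l_\vphi(C)$ were compact and \frech, your final step fails, since a pointwise ultralimit of continuous functions on such a space need not be continuous---continuity of $\tau$ is precisely what relative compactness \emph{in $C_p$} of an approximating family delivers, and it cannot be extracted from properties of the domain alone.

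The missing idea is to dualize, and that is what the paper does. Given the data defining a global type $\tau$---a $\kappa$-sequence $\langle\mf{N}_\alpha:\alpha<\kappa\rangle$ in $C$ and an ultrafilter $\mcal{V}$ on $\kappa$---one defines for each $\alpha$ the function $f_\alpha\colon S^l_\vphi(C)\to[0,1]$ by $f_\alpha(\lim_{i\to\mcal{U}}\mr{ltp}_{\vphi,\mf{M}_i})=\lim_{i\to\mcal{U}}\vphi(\mf{M}_i,\mf{N}_\alpha)$; since $S^l_\vphi(C)$ carries the pointwise topology, each $f_\alpha$ is just evaluation at $\mf{N}_\alpha$ and hence continuous. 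Now the domain of the family $\{f_\alpha:\alpha<\kappa\}\subseteq C_p(S^l_\vphi(C),[0,1])$ is exactly the space assumed \groth and countably tight, and condition \ref{equi1} is exactly the double (ultra)limit condition for this family, with the roles of the two sequences interchanged relative to your assignment: the structures $\mf{N}$ index the functions, the left types are the points, and the clause ``whenever $\lim_{m\to\mcal{V}}x_m$ exists'' in Theorem \ref{thm:doublelimit} matches the clause ``whenever a left type is given by $t=\lim_{i\to\mcal{U}}\mr{ltp}_{\vphi,\mf{M}_i}$.'' Theorem \ref{thm:doublelimit} then makes the closure of the family compact inside $C_p(S^l_\vphi(C),[0,1])$, and $\tau=\lim_{\alpha\to\mcal{V}}f_\alpha$, existing in that compact set of continuous functions, is continuous, i.e.\ explicitly definable; no compactness of $S^l_\vphi(C)$ ever enters. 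Your remaining pieces are fine in outline: \ref{equi3}~$\Rightarrow$~\ref{equi1} is the immediate exchange of limits in both your write-up and the paper, and for \ref{equi1}~$\Leftrightarrow$~\ref{equi2} the paper simply cites Iovino's and the authors' earlier work rather than redoing the subsequence extraction you sketch.
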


\begin{proof}
	For a proof of the equivalence of \ref{equi1} and \ref{equi2}, see \cite{Iovino1999} or \cite{Hamel}. To see that \ref{equi1} implies \ref{equi3}, suppose $\langle \mf{N}_{\alpha} : \alpha<\kappa\rangle$ is a $\kappa$-sequence in $C$ such that 
	\begin{linenomath*} 
	\[\tau(\lim_{i \to \mcal{U}}\mr{ltp}_{\vphi, \mf{M}_i}) = \lim_{i \to \mcal{U}}\lim_{\alpha \to \mcal{V}}\vphi(\mf{M}_i, \mf{N}_{\alpha})\]
	\end{linenomath*} 
	defines a global $\vphi$-type, where $\mcal{V}$ is an ultrafilter on $\kappa$. Define $f_{\alpha}: S^l_{\vphi}(C) \to [0, 1]$ by
	\begin{linenomath*} 
	\[f_{\alpha}(\lim_{i \to \mcal{U}}\mr{ltp}_{\vphi, \mf{M}_i}) = \lim_{i \to \mcal{U}}\vphi(\mf{M}_i, \mf{N}_{\alpha}).\]
	\end{linenomath*} 
	Then the set $A = \{f_{\alpha} : \alpha<\kappa\}$ is a subset $A \subseteq C_p(S^l_{\vphi}(C), [0, 1])$ and satisfies the double limit condition, so that $\clsr{A} \insect C_p(S^l_{\vphi}(C), [0, 1])$ is compact and $\tau = \lim_{\alpha \to \mcal{V}}f_{\alpha}$ is continuous. That \ref{equi3} implies \ref{equi1} is immediate.
\end{proof}

One often deals with the space of $\vphi$-types over a subspace $C$ of $\mr{Str}(L)$, $S^{l}_\vphi(C)$, which by Lemma \ref{lem:rstrct} is a continuous image of $S^{l}_\vphi$. An important application of the $C_p$-theoretic machinery we have developed deals with the preservation of the \groth property from $S^l_\vphi$ to $S^l_\vphi(C)$ for any $C \subseteq \mr{Str}(L)$:

\begin{thm}
\label{thm:preserve}
	Let $L$ be a language for pairs of structures, and $\vphi$ a formula for pairs of structures. If $S^l_\vphi$ is \groth and $C \subseteq \mr{Str}(L)$, then $S^l_\vphi(C)$ is \groth.
\end{thm}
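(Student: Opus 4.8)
The plan is to realize $S^l_\vphi(C)$ as the closure of a continuous image of $S^l_\vphi$ and then to chain the two transfer theorems already at our disposal: continuous images of \groth spaces are \groth (Theorem \ref{thm:contimage}), and a space with a dense \groth subspace is \groth (Theorem \ref{thm:densesubspace}). The map doing the work is the restriction map $\pi_C\colon C_p(\mr{Str}(L)) \to C_p(C)$, which is continuous by Lemma \ref{lem:rstrct}(i) and satisfies $\pi_C(\mr{ltp}_{\vphi,\mf M}) = \rstrct{\mr{ltp}_{\vphi,\mf M}}{C}$ for every structure $\mf M$.

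First I would isolate the relevant piece of $S^l_\vphi$. Let $T = \clsr{\st{\mr{ltp}_{\vphi,\mf M}}{\mf M \in C}}$, the closure taken in $C_p(\mr{Str}(L))$; since $S^l_\vphi$ is itself closed there, $T$ is a closed subspace of $S^l_\vphi$. Continuity of $\pi_C$ gives
\[\pi_C(T) \subseteq \clsr{\pi_C\big(\st{\mr{ltp}_{\vphi,\mf M}}{\mf M \in C}\big)} = \clsr{\st{\rstrct{\mr{ltp}_{\vphi,\mf M}}{C}}{\mf M \in C}} = S^l_\vphi(C),\]
while conversely $\st{\rstrct{\mr{ltp}_{\vphi,\mf M}}{C}}{\mf M \in C} = \pi_C\big(\st{\mr{ltp}_{\vphi,\mf M}}{\mf M \in C}\big) \subseteq \pi_C(T)$. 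Taking closures in $C_p(C)$ yields $\clsr{\pi_C(T)} = S^l_\vphi(C)$, so $\pi_C(T)$ is a continuous image of $T$ that is dense in $S^l_\vphi(C)$. Granting that $T$ is \groth, the conclusion is then immediate: Theorem \ref{thm:contimage} makes the continuous image $\pi_C(T)$ \groth, and Theorem \ref{thm:densesubspace} promotes this to its closure $S^l_\vphi(C)$.

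The whole argument thus reduces to a single point, which is also where I expect the genuine difficulty: the \groth property must descend from $S^l_\vphi$ to its closed subspace $T$. (If one instead reads ``left $\vphi$-type over $C$'' in the usual model-theoretic way, with the realizing structures ranging over all of $\mr{Str}(L)$ and only the parameters confined to $C$, then $S^l_\vphi(C) = \clsr{\pi_C(S^l_\vphi)}$ is literally the closure of a continuous image of $S^l_\vphi$ and this step evaporates.) To establish that a closed subspace of a \groth space is \groth, I would exploit the restriction map $\pi_T\colon C_p(S^l_\vphi) \to C_p(T)$: by Lemma \ref{lem:rstrct} it is continuous with dense image, and because $T$ is \emph{closed} it is moreover open onto that image by Lemma \ref{lem:rstrct}(iv). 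The hard part is then converting this openness, together with the fact that $C_p(S^l_\vphi)$ is a hereditary $g$-space, into the statement that $C_p(T)$ is a hereditary $g$-space; here I would route through Proposition \ref{prop:gspace}, reducing matters to checking that $C_p(T)$ is a $g$-space and that its compact subspaces are \frech, both of which I would try to pull back along $\pi_T$.
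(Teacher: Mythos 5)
Your reduction has the same skeleton as the paper's proof: the paper, too, realizes $S^l_\vphi(C)$ as the closure of a continuous image under the restriction map and then chains Theorem \ref{thm:contimage} with Theorem \ref{thm:densesubspace}. The difference is that the paper applies Theorem \ref{thm:contimage} directly to $A = \st{\mr{ltp}_{\vphi,\mf M}}{\mf M \in C}$, tacitly treating this arbitrary subspace of $S^l_\vphi$ as \groth, whereas you work with $T = \clsr{A}$ and explicitly flag that what is still needed is that the \groth property descends to closed subspaces. So you have correctly located where the substance lies (and your parenthetical is also right: under the alternative reading $S^l_\vphi(C) = \clsr{\pi_C(S^l_\vphi)}$, no heredity of any kind is needed and the two cited theorems finish the proof).

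The genuine gap is that the step you defer is not merely left as a sketch (``I would try to pull back along $\pi_T$'' is a plan, not an argument): it is false as a general topological statement, so no elaboration of the $\pi_T$ strategy can complete your proof. The \groth property is not closed-hereditary, and a counterexample is assembled entirely from facts in this paper. Let $X$ be the tangent disk space described after Proposition \ref{clsrcountablycmpct}. Its rational points above the axis form a countable dense subspace; countable spaces are \groth; hence $X$ is \groth by Theorem \ref{thm:densesubspace}. But the $x$-axis $D$ is a closed subspace of $X$ that is uncountable and discrete, and uncountable discrete spaces are not \groth, as remarked after the definition of \groth spaces. Thus $C_p(D) = \reals^D$ fails to be a hereditary $g$-space although $C_p(X)$ is one --- and note that both tools you intended to use are available in this example (the image $\pi_D(C_p(X))$ is dense in $C_p(D)$ and $\pi_D$ is open onto it by Lemma \ref{lem:rstrct}, since $D$ is closed), which shows they cannot suffice; the obstruction is that members of $C_p(T)$ need not extend to members of $C_p(S^l_\vphi)$, just as almost no functions on the axis extend continuously over $X$. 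Consequently, any correct proof of Theorem \ref{thm:preserve} must exploit what $A$ and $T$ actually are (sets of types), not merely that they sit as (closed) subspaces inside the \groth space $S^l_\vphi$; and you should be aware that the paper's own two-line proof is exposed to the very same objection, since the heredity it silently invokes --- for arbitrary subspaces --- fails even more dramatically.
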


\begin{proof}
	Let $A = \st{\mr{ltp}_{\vphi,\mf{M}}}{\mf{M} \in C} \subseteq S^l_\vphi$. By Lemma \ref{lem:rstrct}, the restriction map $\pi_C: C_p(\mr{Str}(L)) \to C_p(C)$ is continuous. Notice that $\pi_C(A)$ is \groth by Theorem \ref{thm:contimage}. By definition, $S^l_\vphi(C) = \clsr{\pi_C(A)}$ and so it is \groth by Theorem \ref{thm:densesubspace}.
\end{proof}

The following result is routine:

\begin{thm}
	The continuous open image of a Fr\'echet-Urysohn space is Fr\'echet-Urysohn.
\end{thm}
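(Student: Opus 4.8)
The plan is to verify the definition of Fréchet-Urysohn directly for the image. So let $f\colon X \to Y$ be a continuous open map with $X$ Fréchet-Urysohn, and (as ``image'' indicates) $Y = f(X)$, so $f$ is surjective. Fix an arbitrary $B \subseteq Y$ and a point $q \in \clsr{B}$; the goal is to produce a sequence in $B$ converging to $q$. By surjectivity I would first choose some $p \in X$ with $f(p) = q$, with the idea of transporting the problem upstairs to $X$, solving it there using the Fréchet-Urysohn hypothesis, and then pushing the resulting sequence back down by continuity.

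The crucial intermediate claim is that $p \in \clsr{f^{-1}(B)}$, and this is exactly the step where openness is used. Indeed, given any open $U \ni p$ in $X$, openness of $f$ guarantees that $f(U)$ is an open neighbourhood of $q$; since $q \in \clsr{B}$, we get $f(U) \insect B \neq \emptyset$, and pulling back any witness $b \in f(U) \insect B$ to a point $a \in U$ with $f(a) = b$ yields a point $a \in U \insect f^{-1}(B)$. As $U$ was an arbitrary neighbourhood of $p$, this shows $p \in \clsr{f^{-1}(B)}$.

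With the claim in hand the rest is routine. Applying the Fréchet-Urysohn property of $X$ to the set $f^{-1}(B)$ at the point $p$, I obtain a sequence $\seq{a}{n}{n<\omega}$ in $f^{-1}(B)$ with $a_n \to p$. By continuity of $f$, $f(a_n) \to f(p) = q$, and by construction each $f(a_n) \in B$. Hence $\seq{f(a_n)}{n}{n<\omega}$ is a sequence in $B$ converging to $q$, which is what was required.

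The only nontrivial ingredient is the intermediate claim $p \in \clsr{f^{-1}(B)}$, so that is where I expect the ``work'' to be, though even there it amounts to a one-line use of openness; the forward direction (continuity preserving convergence) and the appeal to the Fréchet-Urysohn hypothesis are immediate. It is worth noting that openness is genuinely needed here: a continuous surjection alone need not carry the Fréchet-Urysohn property to its image, since without openness we cannot conclude that the chosen preimage $p$ lies in $\clsr{f^{-1}(B)}$.
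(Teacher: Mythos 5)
Your proof is correct and follows essentially the same route as the paper's: use openness of $f$ to show the chosen preimage point lies in $\clsr{f^{-1}(B)}$, apply the Fr\'echet-Urysohn property upstairs, and push the resulting sequence down by continuity. The only cosmetic difference is that the paper establishes the inclusion $f^{-1}(\clsr{B}) \subseteq \clsr{f^{-1}(B)}$ for all points at once, whereas you verify it only at the single preimage point you need.
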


\begin{proof}
	Let $X$ be Fr\'echet-Urysohn, and $f: X \to Y$ a continuous open surjection. Let $B \subseteq Y$ and take $y \in \clsr{B}$. By continuity it is always true that $f^{-1}(\clsr{B}) \supseteq \clsr{f^{-1}(B)}$. To prove the reverse inclusion, take $x \in f^{-1}(\clsr{B})$ and an open neighbourhood $U$ of $x$. Since $f$ is open, $f(U)$ is an open neighbourhood of $f(x) \in \clsr{B}$ and so $f(U) \insect B \neq \emptyset$. Then $U \insect f^{-1}(B) \neq \emptyset$, and so $x \in \clsr{f^{-1}(B)}$.

	Next, suppose $f(x_0) = y$. Since $X$ is Fr\'echet-Urysohn and $x_0 \in \clsr{f^{-1}(B)}$, we can take a sequence in $f^{-1}(B)$ converging to $x$, say $\seq{z}{n}{n<\omega}$. By continuity, $\lim_{n \to \infty}f(z_n) = f(x_0) = y$, where $f(z_n) \in B$ for each $n < \omega$.
\end{proof} 

Using Theorem \ref{thm:preserve}, we get:

\begin{crl}
\label{crl:preserve}
	Let $L$ be a language for pairs of structures, and $\vphi$ a formula for pairs of structures. If $S^l_\vphi$ is Fr\'echet-Urysohn and $C \subseteq \mr{Str}(L)$ is closed, then $S^l_\vphi(C)$ includes a dense Fr\'echet-Urysohn subspace.
\end{crl}

\begin{proof}
Notice that $S^l_\vphi(C)$ is the closure of the image under the restriction map $pi_C$ of $S^l_\vphi$. By Lemma \ref{lem:rstrct}, $\pi_C$ is continuous and open. We conclude by Theorem \ref{thm:preserve}
\end{proof}

\begin{rmk*}
The importance of compactness in Model Theory is well understood: the pioneers of the discipline built around this topological concept some of the fundamental pillars. Even though some famous open problems on compactness remain open, topologists have reached a deep understanding of the subject, and model theorists have also extensively explored the applications of compactness in first-order logic and its variations. In the last decades, the work of S.~Shelah has brought to the foreground impressive model-theoretic results that rely on Combinatorics and Set Theory, making the use of these resources the fashionable approach in today's Model Theory.   

Reasonable changes to first-order logic make the statement of the Compactness Theorem false --- consider for instance $L_{\omega_1,\omega}$. This does not completely disable the combinatorial tools but definitely sets important limitations. Recent work \cite{Hamel2021} has shown that the intersection of Topology and Model Theory goes way beyond compactness. As opposed to the relation between compactness and first-order Model Theory, the connections between Topology and Model Theory beyond the Compactness Theorem have barely been explored. It is an interesting question whether it is reasonable to hope for a dictionary that translates model-theoretic terms into topological ones. 

The authors have already shown how more advanced Topology can shine a light on modern model-theoretic results and questions on \emph{metastability} and have shown how to convert topological spaces into abstract logics \cite{Hamel2021}. As mentioned before, this present work was inspired by the work of P.~Casazza and J.~Iovino \cite{Casazza} because of their use of Grothendieck's theorem in the context of compactness. Recently, E.~Due\~nez joined Casazza and Iovino, and \cite{Casazza} was reworked into \cite{CDI}. 

There are clear advantages in the new approach of \cite{CDI} but they come with a price tag for those who are interested in the underlying topology. Even though the results are essentially the same, \cite{CDI} resorts to more advanced combinatorial results involving metastability to yield their main proofs without the need of talking about languages for pairs of structures, which instead were omnipresent in \cite{Casazza}. In \cite{CDI}, the topology that we discuss here and that we use to produce generalizations hides away behind these combinatorial results which ultimately rely on compactness, limiting the applicability of their methods to other, non-compact logics. 

The issue at hand is the following: given a language $L$, an $L$-formula $\varphi(x,y)$, an $L$-structure $\mathcal{M}$ and $a, b$ in the universe of $\mathcal{M}$, it is customary to associate $\varphi^\mathcal{M}(a, b)$ to be the truth value of $\varphi(a, b)$ in $\mathcal{M}$. The formulas are evaluated in elements of $\mathcal{M}$ where usually $\mathcal{M}$ carries little to no topology. This is the approach of \cite{CDI}. On the other hand, working with a language for pairs of structures $L$ turns the relevant formulas into continuous functions from $\mathrm{Str}(L)\times \mathrm{Str}(L)$ into $[0,1]$.  $\mathrm{Str}(L)$, just as the space of types that appears further below, can carry quite interesting topologies; this is what we work with and what the original approach from \cite{Casazza} used. The final goal is to be able to combine two normed spaces into an enlarged structure with two norms where we can compare them --- this is Definition \ref{def:pairlang}. 

Notice that an approach in which we start with a language with two function symbols $\norm{}_1, \norm{}_2$ for norms also reduces the maneuverability that the language for pairs of structures offer. One could certainly obtain continuous functions from $\mathrm{Str}(L)$ to $[0,1]$ by assigning to $\mathcal{M}$ the truth value of  $\varphi^\mathcal{M}(\norm{}_1^\mathcal{M}, \norm{}_2^\mathcal{M})$. This approach is an oversimplification since the applicability of the double limit condition becomes unclear once you work with only one variable. There is a similarity here with the use of bipartite formulas in classical model theory: $\varphi(\bar{x},\bar{y})$, where $\bar{x},\bar{y}$ are tuples, instead of simply working with $\varphi(\bar{z})$ where $\bar{z}$ is a longer tuple.

The authors realize that languages for pairs of structures can be cumbersome. However, they are a key component in the topological presentation of \cite{Casazza} and the corresponding generalization that appears in this work. As a result, we have kept the original approach of \cite{Casazza} here. This approach has its own limitations as it makes it more difficult to use similar techniques to talk about other types of languages, formulas or theories. 
\end{rmk*}

\section{Applications and Examples Concerning the Undefinability of Pathological Banach Spaces}
We first introduce some concepts from Analysis which we will deal with in what follows. We denote the Banach space of sequences of real numbers that are eventually $0$ by $c_{00}$. The space of sequences of real numbers converging to $0$ is denoted by $c_0$. Finally, $\ell^p$ denotes the space of sequences $\seq{x}{n}{n<\omega}$ of real numbers such that $\sum_{n < \omega}\abs{x_n}^p < \infty$.

B.~Tsirelson \cite{Tsirelson1974} constructed a Banach space which does not include a copy of any $\ell^p$ or $c_0$. What is now called \textit{Tsirelson's space} is due to T.~Figiel and W.~Johnson \cite{Figiel1974}: let $\seq{t}{n}{n<\omega}$ be the canonical basis for $c_{00}$. If $x = \sum_{n<\omega}a_nx_n \in c_{00}$ and $E, F \in [\omega]^{<\omega}$, we denote $\sum_{n \in E}a_nx_n$ by $Ex$, and write $E \leq F$ if and only if $\max E \leq \min F$.

Recursively define
\begin{linenomath*} 
\begin{align*}
 	&\norm{x}_0 = \norm{x}_{c_{00}}, \\
 	&\norm{x}_{n+1} = \max\left\{\norm{x}_n, \frac{1}{2}\max_k\left\{\sum_{i<k}\norm{E_i}_n\,:\,\{k\} \leq E_1 < \cdots < E_k\right\}\right\}.
 \end{align*}
 \end{linenomath*} 
 Then,
 \begin{linenomath*} 
 \[\norm{x}_n \leq \norm{x}_{n+1} \leq \norm{x}_{\ell^1}\]
 \end{linenomath*} 
 and we denote $\norm{x}_T = \lim_{n \to \infty}\norm{x}_n$. Then,
 \begin{linenomath*} 
 \[\norm{x}_T = \max\left\{\norm{x}_{c_{00}}, \frac{1}{2}\max\left\{\sum_{i<k}\norm{E_ix}_T\,:\,\{k\} \leq E_1 < \cdots < E_k\right\}\right\}\]
 \end{linenomath*} 
 and $T$ is the norm completion of $\norm{}_T$. The reader is referred to P.~Casazza and T.~Shura \cite{Casazza1989} for a detailed exposition of Tsirelson's space. 

 The context of the subsequent results is what is now known as \textit{Gowers' problem}: does every infinite dimensional explicitly definable Banach space include an isomorphic copy of some $\ell^p$ or $c_0$? We found in \cite{Casazza} the inspiration for the framework and results presented in the current paper. The topological assumption used in \cite{Casazza} is countable compactness whereas ours is \groth plus countable tightness. 
 
  There are mistakes in \cite{Casazza} (which we avoid here) which are corrected in the rewrite \cite{CDI}, which lead the authors to now only assert their results for compact logics. With the replacement of countable compactness by compactness, \cite{Casazza} can be corrected, but with this assumption, the approach of \cite{CDI} is more elegant.
 
 Countable compactness does imply \groth, but not countable tightness. In fact, even compactness does not imply countable tightness. In terms of results, we can trivially deduce their main definability results from ours because finitary continuous logic is a countable fragment (see below) of infinitary continuous logic. However the more general definability results for compact logics are obtained using a double limit condition on sequences obtained by Pták's combinatorial approach.

 To obtain our results, we needed to instead use a double limit condition involving ultralimits and assume countable tightness. In work in progress, we obtain a common generalization of “compactness” and “Grothendieck plus countable tightness” that yields the desired undefinability results. 

 Fix a language $L$ for a pair of structures. Let $C$ be the set of all $L$-structures which are normed spaces based on $c_{00}$ (see definition below). We introduce the continuous logic formula for pairs of structures used in \cite{Casazza}: for norms $\norm{}_1$ and $\norm{}_2$, define
 \begin{linenomath*} 
 \[D(\norm{}_1, \norm{}_2) = \sup\left\{\frac{\norm{x}_1}{\norm{x}_2}\,:\,\norm{x}_{\ell^1} = 1\right\}.\]
 \end{linenomath*} 
 Then define
 \begin{linenomath*} 
 \begin{equation}
 \tag{$\star$}
 \label{def:vphi}
 \vphi(\norm{}_1, \norm{}_2) = 1 - \frac{\log D(\norm{}_1, \norm{}_2)}{1+\log D(\norm{}_1, \norm{}_2)}.
 \end{equation}
 \end{linenomath*} 

 It is not difficult to see that if $t = \lim_{i \to \mcal{U}}\mr{ltp}_{\vphi, \norm{}_i}$ is realized by a structure $(\mf{M}, \norm{}_*)$, then
 \begin{linenomath*} 
 \begin{equation}
 \label{eq:limnorm}
 	\lim_{i \to \mcal{U}}\sup_{\norm{x}_{\ell^1} = 1}\frac{\norm{x}_i}{\norm{x}} = \sup_{\norm{x}_{\ell^1} = 1}\frac{\norm{x}_*}{\norm{x}}
 \end{equation}
 \end{linenomath*} 
 for any structure $(c_{00}, \norm{x}) \in C$. In particular, taking $\norm{}_{\ell^1} = \norm{}$ yields $\lim_{i \to \mcal{U}}\norm{x}_i = \norm{x}_*$ for each $x \in c_{00}$. If in addition 
 \begin{linenomath*} 
 \[\norm{}_1 \leq \norm{}_2 \leq \cdots \leq \norm{}_n \leq \cdots\]
 \end{linenomath*} 
 then $\lim_{i \to \infty}\norm{x}_i = \norm{x}_*$.

 Conversely, if
 \begin{linenomath*} 
 \[\norm{}_1 \leq \norm{}_2 \leq \cdots \leq \norm{}_n \leq \cdots\]
 \end{linenomath*} 
 and also $\lim_{i \to \infty}\norm{x}_i = \norm{x}_*$ for every $x \in c_{00}$, then \eqref{eq:limnorm} holds for any $(c_{00}, \norm{}) \in C$ and any nonprincipal ultrafilter $\mcal{U}$ over $\omega$.

 This motivates the following definition from \cite{Casazza}:

 \begin{defi}
 \label{def:uniquelydetermined}
 	Let $\mcal{C}$ be the set of all structures which are normed spaces based on $c_{00}$, and $\vphi$ a formula for a pair of structures, and $\norm{}_*$ a norm on $c_{00}$.
 	\begin{enumerate}[label=(\alph*)]
 		\item If $\st{\norm{}_i}{i < \omega}$ is a family of norms on $c_{00}$ we say that $\st{\mr{ltp}_{\vphi, \norm{}_i}}{i < \omega}$ \emph{determines $\norm{}_*$ uniquely} if, for every $\mcal{U} \in \beta\omega$, the type $t = \lim_{i \to \mcal{U}}\mr{ltp}_{\vphi, \norm{}_i}$ is realized, and $\norm{}_*$ is its unique realization.

 		\item We say that $\norm{}_*$ is \emph{uniquely determined by its $\vphi$-type over $\mcal{C}$} if there is a family of norms $\st{\norm{}_i}{i < \omega}$ on $c_{00}$ such that $\st{\mr{ltp}_{\vphi, \norm{}_i}}{i < \omega}$ determines $\norm{}_*$ uniquely.
 	\end{enumerate}
 \end{defi}

 Notice that if $\norm{}_i$ denotes the $i$-th iterate of the Tsirelson norm, and $\norm{}_T$ the Tsirelson norm, then $\lim_{i \to \infty}\norm{x}_i = \norm{x}_T$ for each $x \in c_{00}$, and so we have the following result from \cite{Casazza} that follows from the previous definition:

 \begin{prop}
 	Let $L$ be a language for pairs of structures, $\mcal{C}$ the class of structures $(c_{00}, \norm{}_{\ell^1}, \norm{})$ such that the norm completion of $(c_{00}, \norm{})$ is a Banach space including $\ell^p$ or $c_{00}$, and let $\vphi(X, Y)$ be the formula defined by \eqref{def:vphi} above. Suppose $\st{(c_{00}, \norm{}_{\ell^1}, \norm{}_i)}{i < \omega}$ is a family of structures in $\mcal{C}$ such that
 	\begin{linenomath*} 
 	\[\norm{}_1 \leq \norm{}_2 \leq \cdots \leq \norm{}_n \leq \cdots\]
 	\end{linenomath*} 
 	and the $\vphi$-type $t = \lim_{i \to \mcal{U}}\mr{ltp}_{\vphi, \norm{}_i}$ is realized by $(c_{00}, \norm{}_{\ell^1}, \norm{}_*)$ in $\mr{Str}(L)$, then $\st{\mr{ltp}_{\vphi, \norm{}_i}}{i < \omega}$ uniquely determines $\norm{}_*$ over $\mcal{C}$. In particular, the Tsirelson norm is uniquely determined by its $\vphi$-type over $\mcal{C}$.
 \end{prop}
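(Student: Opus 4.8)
The plan is to reduce everything to pointwise statements about the norms $\norm{}_i$ and to use the monotonicity hypothesis to turn ultralimits into ordinary limits. I would rely on the two facts recorded in the discussion preceding Definition \ref{def:uniquelydetermined}. First, the \emph{forward} fact: if a type $\lim_{i \to \mcal{U}}\mr{ltp}_{\vphi, \norm{}_i}$ is realized by a structure $(c_{00}, \norm{}')$, then the instance $\norm{} = \norm{}_{\ell^1}$ of \eqref{eq:limnorm} gives $\lim_{i \to \mcal{U}}\norm{x}_i = \norm{x}'$ for every $x \in c_{00}$. Second, the \emph{converse} fact: if $\norm{}_1 \leq \norm{}_2 \leq \cdots$ and $\lim_{i \to \infty}\norm{x}_i = \norm{x}'$ pointwise, then \eqref{eq:limnorm} holds for $\norm{}'$ and every nonprincipal ultrafilter.

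First I would pin down the ordinary pointwise limit. By hypothesis $t = \lim_{i \to \mcal{U}}\mr{ltp}_{\vphi, \norm{}_i}$ is realized by $\norm{}_*$ for the given nonprincipal $\mcal{U}$, so the forward fact yields $\lim_{i \to \mcal{U}}\norm{x}_i = \norm{x}_*$. Since the real sequence $\norm{x}_i$ is nondecreasing, its limit along any nonprincipal ultrafilter equals $\sup_i \norm{x}_i = \lim_{i \to \infty}\norm{x}_i$; hence $\lim_{i \to \infty}\norm{x}_i = \norm{x}_*$ for every $x \in c_{00}$. As an ordinary limit, once it exists, is computed by every nonprincipal ultrafilter, this also gives $\lim_{i \to \mcal{U}'}\norm{x}_i = \norm{x}_*$ for every nonprincipal $\mcal{U}' \in \beta\omega$.

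Next I would fix an arbitrary nonprincipal $\mcal{U}' \in \beta\omega$ and establish both realization and uniqueness of $\norm{}_*$. For realization I would invoke the converse fact: monotonicity together with the pointwise convergence just established gives \eqref{eq:limnorm} for $\norm{}_*$ and $\mcal{U}'$, and since $\vphi$ depends continuously on $D$, this yields $\vphi(\norm{}_*, \norm{}) = \lim_{i \to \mcal{U}'}\vphi(\norm{}_i, \norm{})$ for every $(c_{00}, \norm{}) \in \mcal{C}$, that is, $\mr{ltp}_{\vphi, \norm{}_*} = \lim_{i \to \mcal{U}'}\mr{ltp}_{\vphi, \norm{}_i}$. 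For uniqueness I would take any realization $(c_{00}, \norm{}')$ of $\lim_{i \to \mcal{U}'}\mr{ltp}_{\vphi, \norm{}_i}$; the forward fact then forces $\norm{x}' = \lim_{i \to \mcal{U}'}\norm{x}_i = \norm{x}_*$ for every $x$, so $\norm{}' = \norm{}_*$. As $\mcal{U}'$ was arbitrary, $\st{\mr{ltp}_{\vphi, \norm{}_i}}{i < \omega}$ determines $\norm{}_*$ uniquely over $\mcal{C}$.

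For the Tsirelson norm I would check that the iterates meet these hypotheses: they satisfy $\norm{}_0 \leq \norm{}_1 \leq \cdots$ and $\lim_{i \to \infty}\norm{x}_i = \norm{x}_T$ for every $x$, as recorded above, and each $(c_{00}, \norm{}_{\ell^1}, \norm{}_i)$ lies in $\mcal{C}$ because the completion of $(c_{00}, \norm{}_i)$ contains a copy of $c_0$ (see \cite{Casazza}, \cite{Casazza1989}); taking $\norm{}_* = \norm{}_T$, realized in $\mr{Str}(L)$ by $(c_{00}, \norm{}_{\ell^1}, \norm{}_T)$ --- a structure which itself is \emph{not} in $\mcal{C}$, since $T$ contains no copy of $\ell^p$ or $c_0$ --- the previous paragraphs apply verbatim. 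The genuinely delicate ingredient is the forward fact $\lim_{i \to \mcal{U}}\norm{x}_i = \norm{x}_*$: a priori $\vphi$ records only the global quantity $D(\norm{}_*, \norm{})$ rather than individual coordinates, and it is precisely the passage to pointwise values via \eqref{eq:limnorm} that does the real work; once that reduction (already isolated in the remarks before Definition \ref{def:uniquelydetermined}) and the fact that the finite iterates contain $c_0$ are granted, the remainder is the bookkeeping above.
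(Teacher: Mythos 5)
Your proposal is correct and takes essentially the same route the paper intends: the paper offers no detailed proof, presenting the proposition as an immediate consequence of Definition \ref{def:uniquelydetermined} and the two facts recorded around \eqref{eq:limnorm}, and your argument is exactly that reduction, fleshed out (monotonicity turns ultralimits into the ordinary pointwise limit, the converse fact gives realization for every nonprincipal ultrafilter, and the forward fact gives uniqueness of the realization). Your closing observation is also apt: the real content is the passage from the $\vphi$-type to pointwise values of the norm, which the paper itself only asserts (``It is not difficult to see\dots''), and your reading of ``every $\mathcal{U} \in \beta\omega$'' as ranging over nonprincipal ultrafilters is the only interpretation under which the Tsirelson conclusion can hold.
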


 The following result is a corollary in \cite{Casazza} and is purely analytical. The core analytical result is stated identically in \cite{CDI}.

 \begin{prop}\label{analysisCI}
 	Let $\norm{}_i$ be the $i$-th iterate in the construction of the Tsirelson norm. Then the following hold:
 	\begin{enumerate}[label=(\roman*)]
 		\item $\sup_{\norm{x}_{\ell^1}=1}\frac{\norm{x}_i}{\norm{x}_j} \leq 1$ for $i < j$.

 		\item $\sup_{\norm{x}_{\ell^1}=1}\frac{\norm{x}_i}{\norm{x}_j} \geq j$ for $i > j$.
 	\end{enumerate}
 	Thus, $\sup_{i<j}\vphi(\norm{}_i, \norm{}_j) \neq \inf_{j<i}\vphi(\norm{}_i, \norm{}_j)$.
 \end{prop}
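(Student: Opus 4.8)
The proposition splits into the two inequalities and the ``Thus'' consequence. Part (i) is immediate, the consequence is a one-line computation, and the genuine content is (ii). For (i), recall that the iterates satisfy $\norm{x}_n \le \norm{x}_{n+1} \le \norm{x}_{\ell^1}$, so for $i < j$ we have $\norm{x}_i \le \norm{x}_j$ for every $x$, whence $\sup_{\norm{x}_{\ell^1}=1}\frac{\norm{x}_i}{\norm{x}_j}\le 1$. (It in fact equals $1$: a single basis vector $t_l$ has $\norm{t_l}_n = 1$ for all $n$, since a singleton support admits no nontrivial admissible splitting.)

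Granting (i) and (ii), the last assertion follows by unwinding $\vphi(\norm{}_1,\norm{}_2) = \frac{1}{1+\log D(\norm{}_1,\norm{}_2)}$, where $D(\norm{}_1,\norm{}_2)=\sup_{\norm{x}_{\ell^1}=1}\frac{\norm{x}_1}{\norm{x}_2}$. For $i<j$, part (i) and the basis-vector remark give $D(\norm{}_i,\norm{}_j)=1$, so $\vphi(\norm{}_i,\norm{}_j)=1$ and hence $\sup_{i<j}\vphi(\norm{}_i,\norm{}_j)=1$. For $i>j$, part (ii) gives $D(\norm{}_i,\norm{}_j)\ge j$, so $\vphi(\norm{}_i,\norm{}_j)\le \frac{1}{1+\log j}$; taking $i=j+1$ and letting $j\to\infty$ forces $\inf_{j<i}\vphi(\norm{}_i,\norm{}_j)=0$. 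As $1\neq 0$, the two sides disagree.

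The real work is (ii): for each $i>j$ I must exhibit $x$ with $\norm{x}_{\ell^1}=1$ and $\norm{x}_i\ge j\,\norm{x}_j$. Since the iterates increase, $\norm{x}_i\ge\norm{x}_{j+1}$, so one fixed vector with $\norm{x}_{j+1}\ge j\,\norm{x}_j$ handles all $i>j$ simultaneously. I would pass to the dual description of the iterates: $\norm{x}_n=\sup_f f(x)$, where $f$ ranges over functionals built from the coordinate functionals $t_l^*$ by at most $n$ nested applications of $(f_1,\dots,f_k)\mapsto\frac12(f_1+\cdots+f_k)$ to successive blocks obeying $k\le\min\operatorname{supp}f_1$; this matches the given recursion, with $n$ the nesting depth. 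Take $x=\frac{1}{|L|}\sum_{l\in L}t_l$, where $L$ is the leaf set of a balanced admissible tree of depth $j+1$ with uniform branching $b$, the leaves spread widely enough that the admissibility bound is met with room to spare at every internal node. The full depth-$(j+1)$ functional then yields $\norm{x}_{j+1}\ge 2^{-(j+1)}$, while the balanced depth-$j$ functional reaches only the level-$j$ frontier and gives $\frac{1}{2^{j}b}$; choosing $b\ge 2j$ produces a ratio $\ge b/2\ge j$.

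The hard part is the matching \emph{upper} bound $\norm{x}_j\le\frac{1}{2^jb}$: one must rule out every depth-$\le j$ functional, not merely the balanced one, including those using uneven descent or skipping past early coordinates to inflate the admissible block count $k$. This is the characteristic Tsirelson bookkeeping, proved by induction on the depth, using at each stage that $k\le\min\operatorname{supp}f_1$ caps how finely a functional of bounded depth can subdivide a block whose coordinates have been spread according to the chosen scales. I would quote this estimate from the Casazza--Shura monograph \cite{Casazza1989} rather than reproduce it, since (ii) is exactly the analytic ingredient the present paper imports from \cite{Casazza}.
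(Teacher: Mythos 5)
The first thing to say is that the paper does not prove this proposition at all: it is imported by citation, with the preceding sentence explaining that it is ``a corollary in \cite{Casazza}'' whose core analytical content ``is stated identically in \cite{CDI}.'' So there is no proof in the paper to match your attempt against. Within your proposal, part (i), the reduction of (ii) to the single case $i=j+1$, and the derivation of the final clause from (i) and (ii) are all correct: indeed $\vphi = \frac{1}{1+\log D}$, the basis-vector observation gives $D(\norm{}_i,\norm{}_j)=1$ and hence $\vphi(\norm{}_i,\norm{}_j)=1$ for $i<j$, while (ii) gives $\vphi(\norm{}_i,\norm{}_j)\leq\frac{1}{1+\log j}$ for $j<i$, so the supremum is $1$ and the infimum is $0$.

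The gap is in (ii), and it is not a fixable bookkeeping issue: your witness vectors provably cannot work, for any placement of the leaves. Take any equal-coefficient vector $x=\frac{1}{N}\sum_{i=1}^{N}t_{p_i}$ with $\norm{x}_{\ell^1}=1$ and $p_1<\cdots<p_N$, and put $k=\lfloor N/2\rfloor$. The last $k$ coordinates sit at positions $\geq N-k\geq k$ (the positions are distinct non-negative integers), so the corresponding singletons form an admissible family $\{k\}\leq E_1<\cdots<E_k$, and the recursion already yields $\norm{x}_1\geq\frac{1}{2}\cdot\frac{k}{N}\geq\frac{1}{6}$. Hence $\frac{\norm{x}_i}{\norm{x}_j}\leq\frac{\norm{x}_{\ell^1}}{\norm{x}_1}\leq 6$ for all $i>j\geq 1$: the ratio is capped by an absolute constant, so no equal-coefficient vector can witness (ii) once $j\geq 7$, and your key claim $\norm{x}_j\leq\frac{1}{2^jb}$ is false throughout the regime you need (for $j\geq 2$, $b\geq 2j$ one has $\frac{1}{2^jb}\leq\frac{1}{16}<\frac{1}{6}\leq\norm{x}_1\leq\norm{x}_j$). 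Note that this obstruction is independent of the positions $p_i$, so ``spreading the leaves widely'' cannot rescue the construction; it makes matters worse, since admissibility is keyed to coordinate positions and spreading only empowers shallow functionals that grab the tail half of the support in a single admissible block. This tension---spreading is needed for your full depth-$(j+1)$ functional to be admissible, yet spreading feeds the depth-$1$ adversary---is exactly the difficulty, and for uniform vectors it is unresolvable; consequently the estimate you propose to ``quote from \cite{Casazza1989}'' does not exist there or anywhere. The genuine witnesses underlying \cite{Casazza} and \cite{CDI} are strongly non-uniform iterated averages: at each level the blocks carry equal mass but have rapidly (super-exponentially) increasing support lengths, so that the support enumeration is dominated by the last block while any admissible family whose first element lies in the $i$-th block collects essentially only that block's mass; the upper bound on depth-$j$ functionals is then proved by induction on depth for those vectors. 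If your aim is to mirror the paper, the honest move is to cite (ii) from \cite{Casazza} or \cite{CDI}, as the authors do; a self-contained proof requires rebuilding the construction around rapidly increasing scales rather than balanced trees.
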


Proposition \ref{analysisCI} and Theorem \ref{maindlc} yield:

 \begin{thm}
 	Let $L$ be a language of pairs of structures, and $\mcal{C}$ the class of structures $(c_{00}, \norm{}_{\ell^1}, \norm{})$ such that the norm completion of $(c_{00}, \norm{})$ is a Banach space including $\ell^p$ or $c_0$, and let $\norm{}_T$ be the Tsirelson norm. Let $\vphi$ be the formula as in \eqref{def:vphi} above. If the space $S^l_\vphi(\mcal{C})$ of $\vphi$-types over $\mcal{C}$ is \groth and countably tight, then $\norm{}_T$ is uniquely determined by its $\vphi$-type over $\mcal{C}$ and that $\vphi$-type is not explicitly definable over $\mcal{C}$.
 \end{thm}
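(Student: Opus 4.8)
The plan is to treat the two conclusions separately. The first---that $\norm{}_T$ is uniquely determined by its $\vphi$-type over $\mcal{C}$---needs no topological hypothesis and is essentially already in hand: it is precisely the last sentence of the Proposition preceding \ref{analysisCI}. The point to record is that the iterates satisfy $\norm{}_1 \leq \norm{}_2 \leq \cdots$ and $\lim_{i\to\infty}\norm{x}_i = \norm{x}_T$ for every $x \in c_{00}$, so that for any $\mcal{U} \in \beta\omega$ the type $t = \lim_{i\to\mcal{U}}\mr{ltp}_{\vphi,\norm{}_i}$ is realized in $\mr{Str}(L)$ by $(c_{00}, \norm{}_{\ell^1}, \norm{}_T)$; that cited Proposition then delivers unique determination in the sense of Definition \ref{def:uniquelydetermined}. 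I would simply invoke it.

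For the second conclusion I would argue by contradiction, and it is here that the assumption that $S^l_\vphi(\mcal{C})$ is \groth and countably tight is used. Suppose the left $\vphi$-type $t = \lim_{i\to\mcal{U}}\mr{ltp}_{\vphi,\norm{}_i}$ of $\norm{}_T$ were explicitly definable, i.e.\ its associated map $\tau$ on $S^r_\vphi(\mcal{C})$ were continuous. Under the stated hypotheses, Theorem \ref{maindlc} applies with $C = \mcal{C}$. Reading its implication \ref{equi3}$\Rightarrow$\ref{equi1} for this single type (justified below) then forces the double limit condition \ref{equi1} for $t$ against any sequence $\seq{\mf{N}}{j}{j<\omega} \subseteq \mcal{C}$. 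I would choose $\mf{N}_j = (c_{00}, \norm{}_{\ell^1}, \norm{}_j)$, the $j$-th iterate structure, which lies in $\mcal{C}$ since the completion of each finite iterate includes a copy of $c_0$. The equivalence \ref{equi1}$\Leftrightarrow$\ref{equi2} of Theorem \ref{maindlc} then gives
\[\sup_{i<j}\vphi(\norm{}_i,\norm{}_j) = \inf_{j<i}\vphi(\norm{}_i,\norm{}_j),\]
which directly contradicts Proposition \ref{analysisCI}. Hence $\tau$ cannot be continuous, and the $\vphi$-type of $\norm{}_T$ is not explicitly definable over $\mcal{C}$.

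The genuinely routine ingredients---monotonicity and pointwise convergence of the Tsirelson iterates and the membership of the iterate structures in $\mcal{C}$---I would cite from \cite{Casazza, Casazza1989}. The step I expect to require the most care, and which is the crux, is justifying that the equivalences of Theorem \ref{maindlc}, though phrased as global statements, may be applied to this one fixed type and these specific sequences. This is in fact exactly what the proof of that theorem provides: for a fixed left type $t = \lim_{i\to\mcal{U}}\mr{ltp}_{\vphi,\mf{M}_i}$ and a fixed right type $s = \lim_{j\to\mcal{V}}\mr{rtp}_{\vphi,\mf{N}_j}$, continuity of $\tau$ yields $\tau(s) = \lim_{j\to\mcal{V}}\tau(\mr{rtp}_{\vphi,\mf{N}_j})$, and since $\tau(\mr{rtp}_{\vphi,\mf{N}_j}) = \lim_{i\to\mcal{U}}\vphi(\mf{M}_i,\mf{N}_j)$ while $\tau(s) = \lim_{i\to\mcal{U}}\lim_{j\to\mcal{V}}\vphi(\mf{M}_i,\mf{N}_j)$, one reads off the double limit condition for $t$ directly. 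With that observation in place, the contradiction above is rigorous.
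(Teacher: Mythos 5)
Your proposal is correct and follows exactly the paper's route: the paper derives this theorem in a single line from Proposition \ref{analysisCI} together with Theorem \ref{maindlc} (with unique determination coming from the Proposition immediately preceding \ref{analysisCI}), which is precisely the combination you spell out, using the Tsirelson iterates as the witnessing sequence. Your justification that Theorem \ref{maindlc} applies \emph{locally} to the single type of $\norm{}_T$ via the continuity of its $\tau$ --- rather than only as a global statement over all types --- is a detail the paper leaves implicit, and you fill it in correctly.
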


 \begin{rmk*}
 	Notice that here we are dealing with languages for pairs of structures in which only first-order variables are considered. In such a context, definitions like Definition \ref{def:uniquelydetermined} are sound. An interesting logic with countably many formulas is second-order logic. Although it seems as if the topology is confined to subspaces of $2^\omega$, the model theory is not very well developed, even though Chang and Keisler \cite{Chang1973} have the development of the model theory of second-order logic as one of the problems in their classic book. However, see \cite{Hyttinen2012}. It is unclear to the authors whether the results and even the definitions presented here apply to second-order logic, which has been called ``set theory in sheep's clothing'' \cite{Quine1986}. In particular, it would be interesting to know about the definability of Tsirelson space in second-order arithmetic, which is often conflated with Analysis.
 \end{rmk*}

 We now prove a positive definability result which generalizes the one that appears in \cite{Casazza}. The proof is a variation of the original, in which their use of the Stone-Weierstrass Theorem is replaced by working with the double (ultra)limit condition. This is important because the Stone-Weierstrass Theorem is not nicely extendable beyond compact spaces.

 \begin{defi}
 	A structure $(c_{00}, \norm{}_{\ell^1}, \norm{}, e_0, e_1, \ldots)$ where $\norm{}$ is an arbitrary norm and $\seq{e}{n}{n<\omega}$ is the standard vector basis of $c_{00}$ is called a \emph{structure based on $c_{00}$}.
 \end{defi}

  \begin{thm}
 	Let $L$ be a language for pairs of structures, $\vphi$ the formula defined by \eqref{def:vphi} above, and $C$ a subclass of the class of structures based on $c_{00}$ such that every closed subspace of a space in $\clsr{C}$ includes a copy of $c_0$ or $\ell^p$. Assume that the space of $\vphi$-types over $C$ is \groth and countably tight. If the $\vphi$-type of $\mf{M}$ is explicitly definable from $C$, then $\mf{M}$ includes a copy of $c_0$ or $\ell^p$.
 \end{thm}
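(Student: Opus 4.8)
The plan is to transport the positive-definability argument of \cite{Casazza} to the non-compact setting, using the double ultralimit condition where \cite{Casazza} used Stone--Weierstrass. The goal is to show that explicit definability of the $\vphi$-type $t=\lim_{i\to\mcal{U}}\mr{ltp}_{\vphi,\norm{}_i}$ of $\mf{M}$ forces $\mf{M}$ into $\clsr{C}$; once this is done, the completion of $\mf{M}$ is a space in $\clsr{C}$ and, being a closed subspace of itself, includes a copy of $c_0$ or $\ell^p$ by the standing hypothesis. Since $S^l_\vphi(C)$ is \groth and countably tight, Theorem \ref{maindlc} applies, so explicit definability of $t$ (condition \ref{equi3}) is equivalent to the double limit condition \ref{equi1}.

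I would then feed the double limit condition into the topological Theorem \ref{thm:doublelimit}: the family of types approximating $t$ is relatively compact in the relevant $C_p(\,\cdot\,,[0,1])$, so its closure meets $C_p$ in a compact set containing $t$. Because $S^l_\vphi(C)$ is \groth, $C_p(S^l_\vphi(C))$ is a hereditary $g$-space, so Proposition \ref{prop:gspace} renders this compact set \frech. This is the decisive gain over a bare ultralimit: $t$ is now the limit of a genuine \emph{sequence} of types of structures $\langle\norm{}_{i_k}\rangle_{k<\omega}$ drawn from $C$, rather than merely an $\mcal{U}$-ultralimit.

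Convergence in $C_p$ is pointwise over $C$, so $\vphi(\norm{}_{i_k},\mf{N})\to\vphi(\mf{M},\mf{N})$ for every $\mf{N}\in C$. Specializing $\mf{N}$ to the $\ell^1$-structure and using \eqref{eq:limnorm} gives $\norm{x}_{i_k}\to\norm{x}_*$ for each $x\in c_{00}$, and testing against a rich enough subfamily of $C$ controls the suprema that define $D(\,\cdot\,,\,\cdot\,)$ and hence the values of the sentences separating structures in $\mr{Str}(L)$. This upgraded, non-pointwise convergence is what witnesses $\norm{}_{i_k}\to\mf{M}$ in $\mr{Str}(L)$, placing $\mf{M}\in\clsr{C}$; the hypothesis on $\clsr{C}$ then finishes the proof.

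The hard part will be exactly this upgrade. Realizing $t$ by $\mf{M}$ only says that $\norm{}_*$ is a \emph{pointwise} ultralimit of the $\norm{}_i$ on $c_{00}$, and pointwise convergence of norms controls neither the suprema appearing in the sentences of $\mr{Str}(L)$ nor those in $D$; on its own it does not put $\mf{M}$ in $\clsr{C}$. Indeed it must not, for otherwise the conclusion would hold for the Tsirelson norm, whose class $C$ violates the hereditary hypothesis. The entire force of explicit definability, funnelled through the double limit condition and the \frech-ness supplied by \groth-ness, is to replace this ultralimit by a sequence along which the relevant suprema converge; making this precise---and checking that the resulting convergence genuinely witnesses $\mf{M}\in\clsr{C}$, or realizes the completion of $\mf{M}$ as a closed subspace of some structure in $\clsr{C}$---is the crux.
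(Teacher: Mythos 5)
Your route diverges from the paper's and, as you yourself concede, it stalls exactly at the step you call the crux; that step is a genuine gap, and it fails for two concrete reasons. First, the \frech upgrade is performed in the wrong space. The hypothesis that $S^l_\vphi(C)$ is \groth says that $C_p(S^l_\vphi(C))$ is a hereditary $g$-space, so Proposition \ref{prop:gspace} makes compact subsets of $C_p(S^l_\vphi(C))$ \frech; those are sets of functions \emph{on} the type space, such as the functions $f_\alpha$ and the global type $\tau$ in the proof of Theorem \ref{maindlc}. But $t$ and the approximating types $\mr{ltp}_{\vphi,\norm{}_i}$ live in $S^l_\vphi(C)\subseteq C_p(C)$; to render \emph{their} compact closure \frech you would need $C$ itself to be \groth (so that $C_p(C)$ is a hereditary $g$-space), which is not a hypothesis of the theorem. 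So the ``decisive gain'' of a genuine sequence $\langle\norm{}_{i_k}\rangle_{k<\omega}$ is not available. Second, even if it were, convergence of types is pointwise evaluation of the single formula $\vphi$ against members of $C$; it controls none of the other sentences of $L$, so it cannot witness convergence in the topology of $\mr{Str}(L)$, and the target $\mf{M}\in\clsr{C}$ remains out of reach. Realizing a $\vphi$-type is simply far too little information to force membership in $\clsr{C}$, and no amount of ``testing against a rich enough subfamily of $C$'' changes what pointwise convergence of the one function $\vphi$ can see.

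The missing idea is that membership in $\clsr{C}$ is not needed at all: it suffices to produce a \emph{single} $\mf{N}\in\clsr{C}$ with $\vphi(\mf{M},\mf{N})>0$. The paper gets this by contradiction, using explicit definability only in the form of continuity of $\tau$, which licenses exchanging the two ultralimits: if $\vphi(\mf{M},\mf{N})=0$ for every $\mf{N}\in\clsr{C}$, then
\[\lim_{i \to \mcal{U}}\lim_{j \to \mcal{V}}\vphi(\mf{M}_i, \mf{N}_j) = \lim_{j \to \mcal{V}}\lim_{i \to \mcal{U}}\vphi(\mf{M}_i, \mf{N}_j) = \lim_{j \to \mcal{V}}\vphi(\mf{M}, \mf{N}_j) = 0,\]
and running this with the approximating structures $\mf{M}_j$ themselves in the role of the $\mf{N}_j$ yields $\vphi(\mf{M},\mf{M})=0$, which is absurd because $D(\mf{M},\mf{M})=1$ forces $\vphi(\mf{M},\mf{M})=1$ by \eqref{def:vphi}. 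Having some $\mf{N}\in\clsr{C}$ with $\vphi(\mf{M},\mf{N})>0$, one gets $D(\mf{M},\mf{N})<\infty$, so $e^{\mf{M}}_n\mapsto e^{\mf{N}}_n$ determines an isomorphism, and $\mf{M}$ inherits a copy of $c_0$ or $\ell^p$ from $\mf{N}$, which has one by the hypothesis on $\clsr{C}$. Note that this argument requires no sequential convergence and no direct appeal to Theorem \ref{thm:doublelimit}; in particular, the contrast you draw with the Tsirelson class is resolved not by any failure of convergence but by the fact that for that class the hereditary hypothesis on $\clsr{C}$ fails, so the theorem never applies to it.
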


 \begin{proof}
 	Suppose that the global $\vphi$-type $\tau$ of $\mf{M}$ is \textit{approximated} by $\seq{\mf{M}}{i}{i < \omega} \subseteq C$ and $\mcal{U} \in \beta\omega$, that is
 	\begin{linenomath*} 
 	\[\tau(\lim_{j \to \mcal{V}}\mr{rtp}_{j, \mf{N_j}}) = \lim_{i \to \mcal{U}}\lim_{j \to \mcal{V}}\vphi(\mf{M}_i, \mf{N}_j).\]
 	\end{linenomath*} 
 	We show that there is an $\mf{N} \in \clsr{C}$ such that $\vphi(\mf{M}, \mf{N}) > 0$. Otherwise, $\vphi(\mf{M}, \mf{N}) = 0$ for each $\mf{N} \in \clsr{C}$. Since $\tau$ is explicitly definable, we exchange the order of the ultralimits by taking $\tau(\lim_{j \to \mcal{V}}\mr{rtp}_{j, \mf{N_j}})=\lim_{j \to \mcal{V}}\tau(\mr{rtp}_{j, \mf{N_j}})$ and then obtain
 	\begin{linenomath*} 
 	\[\lim_{i \to \mcal{U}}\lim_{j \to \mcal{V}}\vphi(\mf{M}_i, \mf{N}_j) = \lim_{j \to \mcal{V}}\lim_{i \to \mcal{U}}\vphi(\mf{M}_i, \mf{N}_j) = \lim_{j \to \mcal{V}}\vphi(\mf{M}, \mf{N}_j) = 0.\]
 	\end{linenomath*} 
 	But then consider $\vphi(\mf{M}_i, \mf{M}_j)$ and, by the symmetry of $D(\mf{M}, \mf{N})$, obtain $\vphi(\mf{M}, \mf{M}) = 0$, a contradiction. It is not hard to see that $\vphi(\mf{M}, \mf{N}) > 0$ implies that $D(\mf{M}, \mf{N}) < \infty$, which implies that $e^{\mf{M}}_n \mapsto e^{\mf{N}}_n$ determines an isomorphism (it is a linear bijection between the bases). This concludes the proof since $\mf{N}$ includes a copy of $c_0$ or $\ell^p$.
 \end{proof}
 
 \begin{rmk*}
 Notice that regardless of the results proved above, a space can have a copy of $c_0$ or $\ell^p$ while failing to be explicitly definable. 
 \end{rmk*}

 Let $L$ be a language for metric structures. The \emph{$\mcal{L}_{\omega_1, \omega}(L)$-formulas} are defined recursively as follows:
 \begin{enumerate}[label=(\alph*)]
 	\item All first-order $L$-formulas are $\mcal{L}_{\omega_1, \omega}(L)$-formulas.

 	\item If $\vphi_1, \ldots, \vphi_n$ are $\mcal{L}_{\omega_1, \omega}(L)$-formulas, and $g:[0, 1]^n \to [0, 1]$ is continuous, then $g(\vphi_1, \ldots, \vphi_n)$ is an $\mcal{L}_{\omega_1, \omega}(L)$-formula.

 	\item If $\st{\vphi_n}{n < \omega}$ is a family of $\mcal{L}_{\omega_1, \omega}(L)$-formulas, then $\inf_n\vphi_n$ and $\sup_n\vphi_n$ are $\mcal{L}_{\omega_1, \omega}(L)$-formulas. These can also be denoted by $\Meet_n\vphi_n$ and $\lJoin_n\vphi_n$ respectively.

 	\item If $\vphi$ is an $\mcal{L}_{\omega_1, \omega}(L)$-formula and $x$ is a variable then $\inf_x\vphi$ and $\sup_x\vphi$ are $\mcal{L}_{\omega_1, \omega}(L)$-formulas.
 \end{enumerate}

 An interesting feature of continuous $\mcal{L}_{\omega_1, \omega}$, noted in \cite{Eagle2015}, is that negation ($\neg$) becomes available in the classical sense: if $L$ is a language and $\vphi$ is an $\mcal{L}_{\omega_1, \omega}(L)$-formula, then we can define $\psi(x) = \lJoin_n\{\vphi(x) + \frac{1}{n}, 1\}$. Then $\mf{M} \ent \psi(a)$ if and only if there is an $n < \omega$ such that $\mf{M} \ent \max\{\vphi(a) + \frac{1}{n}, 1\}$ ; that is, $\max\{\vphi(a) + \frac{1}{n}, 1\} = 1$, which is the same as $\vphi(a) \leq 1 - \frac{1}{n}$, i.e.~$\mf{M} \not\ent \vphi(a)$. Thus, $\mf{M} \ent \psi(x)$ if and only if $\mf{M} \not\ent\vphi(x)$, and so $\psi$ corresponds to $\neg\vphi$.

 It is sometimes useful to restrict one's attention to (countable) fragments of $\mcal{L}_{\omega_1, \omega}(L)$, which are easier to work with than the full logic.

 Let $L$ be a language. A \emph{fragment} $\mcal{F}$ of $\mcal{L}_{\omega_1, \omega}(L)$ is a set of $\mcal{L}_{\omega_1, \omega}(L)$-formulas satisfying:
 \begin{enumerate}[label=(\alph*)]
 	\item Every first-order formula in in $\mcal{F}$.
 	\item $\mcal{F}$ is closed under finitary conjunctions and disjunctions.
 	\item $\mcal{F}$ is closed under $\inf_x$ and $\sup_x$.
 	\item $\mcal{F}$ is closed under subformulas.
 	\item \label{rm:doesnotwork}$\mcal{F}$ is closed under substituting terms for free variables.
 \end{enumerate}

 Notice that every subset of $\mcal{L}_{\omega_1, \omega}(L)$ generates a fragment, and that every finite set of formulas generates a countable fragment. There are two arguments to support the idea of working with countable fragments of $\mcal{L}_{\omega_1, \omega}(L)$. First, notice that a given proof involves only finitely many formulas which then generate a countable fragment of $\mcal{L}_{\omega_1, \omega}(L)$. Notice that even if we adjoined to our proof system some sort of $\omega$-rule, we would not have more than countably many formulas involved in a given proof. These formulas would still generate a countable fragment. Second (as noted independently by C.~Eagle in a personal communication with the authors), if an object is definable in $\mcal{L}_{\omega_1, \omega}(L)$, then it is definable in a countable fragment by the same argument. This remark does not hold for $\mcal{L}_{\omega_1, \omega_1}$ because of \ref{rm:doesnotwork} above.

 As the work of Casazza and Iovino deals with continuous logics which are finitary in nature, it was natural to ask whether their result on the undefinability of Tsirelson's space could be proved for continuous $\mcal{L}_{\omega_1, \omega}$, which is arguably a more natural language from the point of view of Banach space theorists.

 In discrete model theory, countable fragments of $\mcal{L}_{\omega_1, \omega}$ have been studied previously; for instance, as we mentioned previously, M.~Morley \cite{Morley1974} showed that the space of types of a countable fragment of $\mcal{L}_{\omega_1, \omega}$ is Polish.

 \begin{rmk*}
 	In the continuous case, we note that the space of types of a countable fragment $\mcal{F}$ of continuous $\mcal{L}_{\omega_1, \omega}(L)$ can be seen as a subspace of $[0, 1]^{\mcal{F}}$ which is metrizable and second countable, and so it is separable and first countable. Hence it is \groth and countably tight and thus our results apply.
 \end{rmk*}

 \section{The NIP and the Bourgain-Fremlin-Talagrand Dichotomy}
 A series of papers by P.~Simon \cite{Simon2015}, K.~Khanaki \cite{Khanaki2014,Khanaki2015}  and Khanaki and A.~Pillay \cite{Khanaki2017} has explored connections between the \emph{Non Independence Property} (NIP) and the famous dichotomy of \cite{Bourgain1978}:

 \begin{thm}
 	Let $X$ be a Polish space, and let $\seq{f}{n}{n < \omega}$ be a sequence in $C_p(X)$ with an accumulation point $f \in \reals^X$. Then one of the following holds:
 	\begin{enumerate}[label=(\roman*)]
 		\item $\seq{f}{n}{n < \omega}$ has a subsequence converging to $f$. 
 		\item The closure $\clsr{\seq{f}{n}{n < \omega}}$ in $\reals^X$ includes a copy of $\beta\omega$.
 	\end{enumerate}
 \end{thm}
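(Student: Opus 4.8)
The plan is to prove the contrapositive dichotomy: assuming that no subsequence of $\seq{f}{n}{n<\omega}$ converges pointwise to $f$, I will extract a subsequence whose closure contains a copy of $\beta\omega$. First I would make the harmless reduction that the sequence is pointwise bounded, composing each $f_n$ with a fixed homeomorphism $h\com f_n$ of $\reals$ onto a bounded interval; this preserves continuity, cluster points, convergence, and the presence of a copy of $\beta\omega$, while placing the whole sequence inside a compact product $\prod_{x\in X}[a_x,b_x]$. Consequently every ultralimit $\lim_{k\to\mcal{U}}f_{n_k}$ exists pointwise, by Proposition \ref{ultracomp}.

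The soft half is to show that a suitable \emph{independence} of a subsequence already yields $\beta\omega$. Call $\seq{g}{k}{k<\omega}$ \emph{independent} if there are rationals $p<q$ such that for all disjoint finite $A,B\subseteq\omega$ the set $\st{x\in X}{(\forall k\in A)\,g_k(x)\geq q \text{ and } (\forall k\in B)\,g_k(x)\leq p}$ is nonempty. Given such a subsequence, consider $\Theta:\beta\omega\to\clsr{\seq{g}{k}{k<\omega}}$ defined by $\Theta(\mcal{U})=\lim_{k\to\mcal{U}}g_k$; it is continuous, and to see it is injective I would use a \emph{second} closure, exactly in the spirit of Grothendieck's double-limit phenomenon. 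Let $L$ be the closure in $\reals^{\clsr{\{g_k\}}}$ of the point evaluations $\st{\mr{ev}_x}{x\in X}$, a compact space. For distinct $\mcal{U}\neq\mcal{W}$ pick $C\in\mcal{U}\setminus\mcal{W}$; the closed sets $\st{\ell\in L}{\ell(g_k)\geq q}$ for $k\in C$ together with $\st{\ell\in L}{\ell(g_k)\leq p}$ for $k\notin C$ have the finite intersection property by independence, so compactness of $L$ produces an $\ell$ realizing all of them at once. Then $\ell(\Theta(\mcal{U}))=\lim_{k\to\mcal{U}}\ell(g_k)\geq q>p\geq\lim_{k\to\mcal{W}}\ell(g_k)=\ell(\Theta(\mcal{W}))$, so $\Theta(\mcal{U})\neq\Theta(\mcal{W})$. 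A continuous injection from the compact space $\beta\omega$ into a Hausdorff space is an embedding, giving alternative (ii).

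It remains to extract an independent subsequence from the failure of (i). The non-convergence to the cluster point $f$ supplies, after passing to a subsequence, rationals $p<q$ and persistent oscillation across the gap $(p,q)$: infinitely many $f_n$ take values $\geq q$ and infinitely many take values $\leq p$ at points accumulating at some $y$ where $f$ witnesses discontinuity across $(p,q)$. One then colours, for each infinite $H\subseteq\omega$, according to whether the level sets $\st{x}{f_n(x)\geq q}$ and $\st{x}{f_n(x)\leq p}$ ($n\in H$) admit a common refinement forcing convergence, or instead remain Boolean-independent. Because $X$ is Polish these level sets are Borel and the associated colouring of $[\omega]^\omega$ is analytic, so the infinite Ramsey theorem of Galvin--Prikry (equivalently, the Silver theorem) yields an infinite homogeneous $H$. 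The non-convergence hypothesis excludes the convergent colour, so on $H$ the level sets are independent, and $\seq{f}{n}{n\in H}$ is the desired independent subsequence.

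The hard part is this last extraction: it is precisely the combinatorial core of Rosenthal's $\ell^1$-theorem, and it is the only place where the Polishness of $X$ is genuinely used, to guarantee the measurability that makes the infinite-dimensional Ramsey theorem applicable. The honest alternative, which I would adopt if a self-contained account is not required, is to quote Rosenthal's theorem as a black box --- a pointwise bounded sequence in $C_p(X)$ with no pointwise convergent subsequence has a subsequence equivalent to the $\ell^1$-basis --- and to observe that such an $\ell^1$-subsequence is independent in the sense above; the remaining reductions (boundedness and the $\beta\omega$-embedding) are then soft topology, as carried out in the first two paragraphs.
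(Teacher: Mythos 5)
First, be aware that the paper contains no proof of this theorem: it is quoted as background, with \cite{TodorcevicTopics} cited for ``this more modern statement and a proof of it,'' so your proposal can only be judged on its own terms. On those terms it has a fatal gap in the ``soft half.'' Finite independence of a bounded sequence does \emph{not} yield a copy of $\beta\omega$ in the pointwise closure when $X$ is not compact. Concretely, let $X=\omega$ be discrete (a Polish space), enumerate all pairs of disjoint finite subsets of $\omega$ as $(A^i,B^i)_{i<\omega}$, and put $g_k(i)=q$ if $k\in A^i$ and $g_k(i)=p$ otherwise: this is a bounded, finitely independent sequence of continuous functions, yet $\reals^\omega$ is metrizable, so its compact subsets cannot contain $\beta\omega$. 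The step of your argument that breaks is precisely the exchange $\ell(\Theta(\mcal{U}))=\lim_{k\to\mcal{U}}\ell(g_k)$: your $\ell$ is only a pointwise limit of evaluations, not a continuous function on the compact set $\clsr{\{g_k : k<\omega\}}$, so it cannot be passed through the ultralimit. For instance, with $g_k(x)=x^k$ on $[0,1]$ and $\ell$ a cluster point of the evaluations at the points $x_i=1-1/i$, one has $\ell(\lim_{k\to\mcal{U}}g_k)=\ell(\chi_{\{1\}})=0$ while $\lim_{k\to\mcal{U}}\ell(g_k)=1$. Licensing such an exchange is exactly the Grothendieck-type difficulty, and nothing in your construction provides it. What the true proof requires is \emph{infinite} independence---actual points of $X$ realizing the pattern $(C,\omega\setminus C)$ for every $C\subseteq\omega$---and manufacturing such points is where the completeness of $X$ enters (via a Cantor scheme of shrinking closed sets). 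Your sketch uses Polishness only for ``measurability,'' which cannot be the whole story: as the paper itself records in Section 5, the dichotomy fails for separable metrizable spaces (MAD families, \cite{Horowitz}).

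Second, your extraction step rests on a conflation of hypotheses. Rosenthal's theorem---which you propose to invoke as a black box---assumes there is \emph{no pointwise convergent subsequence at all}; the theorem here assumes only that no subsequence converges \emph{to the particular accumulation point} $f$, and bridging that gap is the actual content of Bourgain--Fremlin--Talagrand beyond Rosenthal. The paper's MAD-family example shows the two hypotheses are genuinely different: there $\mathbf{0}$ is an accumulation point of $\seq{f}{n}{n<\omega}$ to which no subsequence converges, yet many subsequences do converge pointwise (to characteristic functions of members of the family), so your assertion that ``the non-convergence hypothesis excludes the convergent colour'' is false in general. Note also that $f$ itself need not be Borel or Baire measurable, so colourings defined from $f$ are not automatically analytic, putting them outside the direct reach of Galvin--Prikry/Silver. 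Finally, a lesser but real issue: after your boundedness reduction, ultralimits are computed in the compact cube and may take the endpoint values at some points; such functions do not pull back under $h$, so the copy of $\beta\omega$ you would produce need not lie in the closure taken in $\reals^X$, which is what alternative (ii) requires.
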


 The informal conclusion is that such a sequence is either ``good'' (has a convergent subsequence) or is very complicated (its closure has cardinality $2^{2^{\aleph_0}}$---the number of ultrafilters on $\omega$). Although this dichotomy is widely attributed to \cite{Bourgain1978}, given that their work contains all the ideas for it, this more modern statement and a proof of it can be found in \cite{TodorcevicTopics}. The model-theoretic translation of this result is presented in detail in \cite{Simon2014}.

 At the suggestion of the editor, we have started looking at applications of $C_p$-theory to NIP. As opposed to the double limit conditions this paper deals with, the dichotomy above is not so easily generalized from the topological point of view. However, Haim Horowitz and the first author did realize that the dichotomy works for analytic spaces, i.e.~continuous images of the Baire space $\omega^\omega$ and, more generally, that if the dichotomy holds for a space $X$, then it holds for all continuous images of $X$. The proof goes as follows: suppose $f:X\to Y$ is continuous and that the dichotomy holds for $X$. Since the dichotomy is really a property of $\mathbb{R}^X$ and by Lemma \ref{lem:dual}, $\mathbb{R}^Y$ is homeomorphic to a subspace of $\mathbb{R}^X$ through the dual map $\Phi_f$, which also preserves continuity, then $Y$ has the dichotomy. 

 Todorcevic and Horowitz \cite{Horowitz} have shown that there is no hope for it to hold for \lind $p$-spaces as it fails for separable metrizable spaces: any maximal almost disjoint (MAD) family is a counterexample.

\begin{defi}
	An \emph{almost disjoint} family of subsets of $\omega$ is one in which any two members have finite intersection. The family is \emph{MAD} if it is maximal, i.e.~there is no bigger such family.
\end{defi}

It is not hard to show there is such a MAD family, since by Zorn's Lemma, any almost disjoint family can be extended to a maximal one. Moreover, if $V=L$, a co-analytic MAD family can be found \cite{Horowitz} to serve as a counterexample. Topologists like to consider almost disjoint families of rationals obtained by considering sequences of rationals converging to different irrationals.

\begin{thm}[\cite{Horowitz}]
	Let $A$ be a MAD family, considered as a subspace of the Cantor set. Then the BFT dichotomy fails for $A$. 
\end{thm}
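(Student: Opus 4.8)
The plan is to exhibit one sequence in $C_p(A)$, together with a single accumulation point, that simultaneously violates both alternatives of the dichotomy. Identify each member of $A$ with its characteristic function, so that $A \subseteq 2^\omega$, and for each $n < \omega$ let $f_n = \rstrct{\pi_n}{A}$ be the restriction to $A$ of the $n$-th coordinate projection $\pi_n \colon 2^\omega \to \{0,1\}$. Each $\pi_n$ is continuous, so $\seq{f}{n}{n<\omega}$ is a sequence in $C_p(A, 2) \subseteq C_p(A)$, and concretely $f_n(a) = 1$ iff $n \in a$. I would take the accumulation point to be the zero function $\mathbf{0} \in \reals^A$.

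First I would record the combinatorial lemma that for an \emph{infinite} MAD family every finite subfamily $F \subseteq A$ has co-infinite union $\Union F$: if $\omega \setminus \Union F$ were finite, then any other member $b \in A \setminus F$ meets each $a \in F$ in a finite set and meets $\omega \setminus \Union F$ in a finite set, forcing $b$ to be finite, a contradiction. This immediately shows $\mathbf{0}$ is an accumulation point, since any basic neighbourhood of $\mathbf{0}$ is determined by a finite $F \subseteq A$, and for the infinitely many $n \in \omega \setminus \Union F$ we have $f_n(a) = 0$ for all $a \in F$. Next I would rule out alternative (i): a subsequence $\seq{f}{n_k}{k<\omega}$ converges pointwise to $\mathbf{0}$ exactly when $S = \st{n_k}{k<\omega}$ has finite intersection with every member of $A$. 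But $S$ is infinite, so by maximality it has infinite intersection with some $a \in A$ (taking $a = S$ if $S \in A$), whence $f_{n_k}(a) = 1$ for infinitely many $k$ and the subsequence does not converge to $\mathbf{0}$ at $a$. Hence no subsequence converges to $\mathbf{0}$.

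Finally, to rule out alternative (ii) I would bound the cardinality of $\clsr{\st{f_n}{n<\omega}}$ in $\reals^A$. As $2^A$ is compact, Proposition \ref{ultracomp} gives that every cluster point is an ultralimit $g_{\mcal U} = \lim_{n \to \mcal U} f_n$ for some $\mcal U \in \beta\omega$, and $g_{\mcal U}(a) = 1$ iff $a \in \mcal U$. Because the members of $A$ are pairwise almost disjoint and a free $\mcal U$ contains no finite set, at most one $a \in A$ can belong to $\mcal U$; thus every cluster point is either $\mathbf{0}$ or the indicator of a single point of $A$. Consequently $\crd{\clsr{\st{f_n}{n<\omega}}} \leq \aleph_0 + \crd{A} \leq 2^{\aleph_0}$, whereas any copy of $\beta\omega$ has $2^{2^{\aleph_0}}$ points and therefore cannot embed into the closure. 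With both alternatives excluded, the dichotomy fails for $A$.

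The main obstacle is the co-infinite-union lemma and its honest use of maximality; once that is in hand, the non-convergence of subsequences is again just maximality, and the failure of (ii) reduces to a clean cardinality count fueled by pairwise almost disjointness. I would also flag the standing hypotheses: $A$ is separable metrizable as a subspace of $2^\omega$ but need not be Polish, so the precise content is that the \emph{conclusion} of the dichotomy can fail on such a space.
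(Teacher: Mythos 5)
Your proof is correct and follows essentially the same route as the paper's sketch: the same coordinate-projection functions $f_n$, the same accumulation point $\mathbf{0}$, non-convergence of subsequences via maximality, and a description of the closure of $\{f_n : n<\omega\}$ that rules out a copy of $\beta\omega$. You fill in details the paper's sketch leaves implicit (the co-infinite-union lemma, the ultralimit computation showing every cluster point is $\mathbf{0}$ or the indicator of a single member of $A$, and the cardinality count $2^{\aleph_0} < 2^{2^{\aleph_0}}$), but this is an expansion of the same argument rather than a different approach.
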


\begin{proof}[Sketch of Proof]
	Consider the following family in $C_p(\mcal{A})$: for each $n < \omega$, let $f_n: \mcal{A} \to 2$ be defined by $f_n(x) = 1$ if and only if $n \in x$. Then the zero function $\mathbf{0}$ is in the closure $\clsr{\seq{f}{n}{n<\omega}}$. By MADness, there is no converging subsequence. One can also show that the closure $\clsr{\seq{f}{n}{n<\omega}}$ consists precisely of the $f_n$'s, the function $\mathbf{0}$, and the characteristic functions of each member of $\mcal{A}$, i.e.~the functions defined by $f_y(x) = 1$ if and only if $x = y$ for each $y \in \mcal{A}$.
\end{proof}

\emph{Angelic spaces} were introduced by Fremlin and appeared in Pryce \cite{Pryce1971}. They are defined to be spaces $X$ such that for every subset $A$ of $X$, if $A$ is countably compact in $X$, then $A$ is relatively compact and Fr\'echet-Urysohn. Then we can restate Proposition \ref{prop:gspace} as: a space is a hereditary $g$-space if and only if it is angelic. In their seminal paper, Bourgain, Fremlin, and Talagrand proved that if $X$ is Polish, then the first Baire class $B_1(X)$ (the class of pointwise limits of continuous real-valued functions on $X$) is angelic with the subspace topology inherited from the product topology on $\reals^X$. In his lecture at the Moscow conference celebrating the 80th birthday of Prof.~A.~V.~\arhan, the second author suggested that topologists investigate $B_1(X)$ with the same vigour with which they have studied $C_p(X)$. This will likely provide new insights into NIP. We plan to do this ourselves.

Bourgain, Fremlin, and Talagrand prove their results for $K$-analytic spaces, which, as mentioned before, are continuous images of the perfect pre-images of separable completely metrizable spaces. Thus they are the subclass of \lind $\Sigma$-spaces obtained by starting with separable completely metrizable spaces rather than arbitrary separable metrizable ones. Indeed, MAD families cannot be completely metrizable, or even analytic. This is a classical result. There is a ``BFT-flavoured'' proof in \cite{Horowitz}. Thus a topological approach to NIP should start with $K$-analytic spaces. It is an interesting question whether one can go beyond that class. The second author is engaged in a project to define \emph{$K$-projective} sets in analogy to the projective sets in descriptive set theory \cite{TallGrot}, but it is not clear whether this is possible. If it is, does this enable NIP results to be extended to a larger class of theories?

Connections between NIP and compact spaces are explored in \cite{Simon2014}. There is a brief survey of references for related topology in \cite{Hamel}. There is much more to be investigated, but that is beyond the scope of this paper.

\section{Appendix: Proof that all \lind $\Sigma$-spaces are \groth}
First we need the \textit{\arhan-Pytkeev Theorem} (\cite{ArhangelskiiFunction}) and some definitions.

\begin{thm}\label{thm:tcpsup}
	$t(C_p(X)) = \sup\{L(X^{n+1}) : n < \omega\}$.
\end{thm}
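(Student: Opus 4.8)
The plan is to establish the two inequalities $t(C_p(X)) \le \lambda$ and $\lambda \le t(C_p(X))$ separately, where I abbreviate $\tau = t(C_p(X))$ and $\lambda = \sup\st{L(X^{n+1})}{n<\omega}$. Since $C_p(X)$ is a topological group under pointwise addition it is homogeneous, so its tightness is realised at the single point $\mathbf{0}$; thus for the first inequality it suffices to show that whenever $A \subseteq C_p(X)$ and $\mathbf{0} \in \clsr{A}$, there is a $B \subseteq A$ with $\crd{B} \le \lambda$ and $\mathbf{0} \in \clsr{B}$, and dually for the second.

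First I would prove $t(C_p(X)) \le \lambda$. Suppose $\mathbf{0} \in \clsr{A}$. For each $n<\omega$, each $m<\omega$, and each $g \in A$, set $U^{n,m}_g = \st{(y_0,\dots,y_n) \in X^{n+1}}{(\forall i \le n)(\abs{g(y_i)} < 1/m)}$, an open subset of $X^{n+1}$ because $g$ is continuous. Since $\mathbf{0} \in \clsr{A}$, every tuple $(x_0,\dots,x_n)$ lies in some $U^{n,m}_g$ (choose $g \in W(\mathbf{0}; x_0,\dots,x_n; 1/m) \insect A$), so $\st{U^{n,m}_g}{g \in A}$ is an open cover of $X^{n+1}$. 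As $L(X^{n+1}) \le \lambda$, I may pick $A_{n,m} \subseteq A$ with $\crd{A_{n,m}} \le \lambda$ whose associated sets still cover $X^{n+1}$. Then $B = \Union\st{A_{n,m}}{n,m<\omega}$ has $\crd{B} \le \lambda$, and $\mathbf{0} \in \clsr{B}$: given a basic neighbourhood $W(\mathbf{0}; x_0,\dots,x_n;\epsi)$, fix $m$ with $1/m<\epsi$; the tuple $(x_0,\dots,x_n)$ is covered by some $U^{n,m}_g$ with $g \in A_{n,m} \subseteq B$, whence $g \in W(\mathbf{0}; x_0,\dots,x_n;\epsi) \insect B$.

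The reverse inequality amounts to showing $L(X^{n+1}) \le \tau$ for every $n$, and this is where the real work lies. I would first reduce an arbitrary open cover of $X^{n+1}$ to one by cozero rectangles $\prod_{i \le n}\mathrm{coz}(g_i)$ (these form a base of $X^{n+1}$ by complete regularity), since a small subcover of the refinement yields one of the original cover. It is convenient to recast matters in terms of \emph{$\omega$-covers}: a family $\gamma$ of open sets of $X$ with $X \notin \gamma$ such that every finite subset of $X$ lies in a single member. The useful bridge is that $\gamma$ is an $\omega$-cover of $X$ if and only if $\st{V^{k}}{V \in \gamma}$ covers $X^{k}$ for each $k$, and that closing any cover under finite unions turns it into an $\omega$-cover. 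The half of the Gerlits--Nagy correspondence I can prove outright is $o(X) \le \tau$, where $o(X)$ is the least $\kappa$ such that every $\omega$-cover of $X$ has an $\omega$-subcover of cardinality $\le \kappa$: given a cozero $\omega$-cover with members $V = \mathrm{coz}(g_V)$, $0 \le g_V \le 1$, the set $A = \st{\min(1, k\,g_V)}{V \in \gamma,\ k<\omega} \subseteq C_p(X)$ has $\mathbf{1} \in \clsr{A}$ (for a finite $F$, choose $V \supseteq F$ and then $k$ large enough that $k\,g_V \ge 1$ on $F$); applying $t(C_p(X)) \le \tau$ at $\mathbf{1}$ yields $B \subseteq A$ of size $\le \tau$ with $\mathbf{1} \in \clsr{B}$, and the sets $V$ occurring among the members of $B$ form an $\omega$-subcover of size $\le \tau$. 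In particular this already gives $L(X) \le \tau$.

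The main obstacle is the passage from $\omega$-covers of $X$ to covers of the higher powers $X^{n+1}$: one needs that the relevant covering number is not raised by finite products. The naive reductions fail here precisely because of \emph{coordinate matching} --- a cozero rectangle requires each coordinate to land in its own factor, and this information is lost when one restricts to the diagonal copy of $X$ or projects to a single coordinate; moreover no bound of product-Lindel\"of type is available in general (witness the Sorgenfrey line, whose square is far from Lindel\"of). Overcoming this is the hard part, and I would reproduce Arhangel'ski\u{\i}'s direct argument following \cite{ArhangelskiiFunction}, which handles the rectangles by a recursion that constructs the subcover coordinate by coordinate, feeding each stage back into the countable-closure afforded by the tightness hypothesis.
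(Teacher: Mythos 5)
Your first inequality, $t(C_p(X)) \le \sup\{L(X^{n+1}) : n<\omega\}$, is correct and is essentially the paper's own argument: the paper works at an arbitrary $f \in \overline{A}$ with precision $\tfrac{1}{n+1}$ tied to the length of the tuple, while you work at $\mathbf{0}$ (justified by homogeneity of the topological group $C_p(X)$) with a separate precision index $m$; the difference is cosmetic.

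The genuine gap is in the reverse inequality, which you yourself identify as ``where the real work lies.'' What you actually prove is that every cozero $\omega$-cover of $X$ has an $\omega$-subcover of cardinality $\le \tau$ (your bound $o(X) \le \tau$), and hence $L(X) \le \tau$; that argument, with the functions $\min(1, k\,g_V)$ accumulating at $\mathbf{1}$, is correct. But the theorem requires $L(X^{n+1}) \le \tau$ for all $n$, and for $n \ge 1$ you give no proof: you describe the passage from $\omega$-covers of $X$ to covers of $X^{n+1}$ as the main obstacle and then say you ``would reproduce Arhangel'ski\u{\i}'s direct argument.'' That is a deferral, not a proof --- and the argument you defer to (the recursion with $\mathcal{V}$-small families) is exactly the proof the paper gives. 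So, as written, the proposal is incomplete precisely on the step that separates this theorem from the easy statement $L(X) \le t(C_p(X))$.

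The irony is that you are one elementary lemma away from a complete proof along your own route, and the ``coordinate matching'' obstacle is dispatched by a standard trick, not by the full recursion. Given a cover $\mathcal{R}$ of $X^{n+1}$ by (cozero) rectangles, let $\gamma$ be the family of all cozero $W \subseteq X$, $W \neq X$, such that $W^{n+1}$ is covered by finitely many members of $\mathcal{R}$. Then $\gamma$ is an $\omega$-cover: given a finite $F \subseteq X$, pick for each $\xi = (x_0,\dots,x_n) \in F^{n+1}$ a rectangle $B_\xi = B_\xi^0 \times \cdots \times B_\xi^n \in \mathcal{R}$ containing $\xi$; for $x \in F$ put $W_x = \bigcap\{B_\xi^i : \xi \in F^{n+1},\ i \le n,\ \xi_i = x\}$ and $W = \bigcup_{x \in F} W_x$ (cozero, since finite intersections and unions of cozero sets are cozero). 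If $(y_0,\dots,y_n) \in W^{n+1}$, choose $x_i \in F$ with $y_i \in W_{x_i}$; then $(y_0,\dots,y_n) \in B_{(x_0,\dots,x_n)}$, so $W^{n+1}$ is covered by the finitely many $B_\xi$, and $F \subseteq W \in \gamma$. Now your bound $o(X) \le \tau$ produces an $\omega$-subcover $\gamma' \subseteq \gamma$ with $|\gamma'| \le \tau$; since each point of $X^{n+1}$ has its finite set of coordinates inside some $W \in \gamma'$, the sets $W^{n+1}$, $W \in \gamma'$, cover $X^{n+1}$, and each is covered by finitely many members of $\mathcal{R}$, yielding a subcover of $\mathcal{R}$ of size $\le \tau$. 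With this lemma inserted, your proof becomes correct and genuinely different from the paper's: the paper handles the power $X^{n+1}$ in one stroke, using finite $\mathcal{V}$-small families to build a dense set $A = \bigcup_{\mathcal{W}} A_{\mathcal{W}}$ in $C_p(X)$ and applying tightness at $\mathbf{1}$, whereas your route factors the argument through the Gerlits--Nagy covering number of $\omega$-covers, isolating the tightness argument from the purely combinatorial product step above.
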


\begin{proof}
	Suppose first that $L(X^n) \leq \kappa$ for all $n < \omega$. Let $A \subseteq C_p(X)$ and take $f \in \clsr{A} \subseteq C_p(X)$. Now, for each $n < \omega$ and each $\xi = (x_0, \ldots, x_n) \in X^{n+1}$, there is some $g_\xi \in A$ such that
	\begin{linenomath*}
	\[(\forall i \leq n) (\abs{g_\xi(x_i) - f(x_i)} < \frac{1}{n+1}).\]
	\end{linenomath*}
	Since both $g_\xi$ and $f$ are continuous, for each $i \leq n$ there is a neighbourhood $O(x_i)$ of $x_i$ such that $(\forall y \in O(x_i))(\abs{g_\xi(y) - f(y)} < \frac{1}{n+1})$.

	Now define the neighbourhood $U_\xi = O(x_0) \times \cdots \times O(x_n)$ and let $\mcal{U}_{n+1} = \st{U_\xi}{\xi \in X^{n+1}}$. Clearly, $\xi \in U_\xi$ and so $\mcal{U}_{n+1}$ covers $X^{n+1}$. Since $L(X^{n+1}) \leq \kappa$, we have some $\mcal{U}^*_{n+1} \subseteq \mcal{U}_{n+1}$, a cover of $X^{n+1}$, such that $\crd{\mcal{U}^*_{n+1}} \leq \kappa$, and we can define $B_{n+1} = \st{g_\xi}{U_\xi \in \mcal{U}^*_{n+1}}$, for each $n < \omega$. Now let $B = \Union_{n < \omega}B_{n+1}$. Clearly, $\crd{B} \leq \kappa$ and $B \subseteq A$. We now claim that $f \in \clsr{B}$. Indeed, let $y_0, \ldots, y_n \in X$ and $\epsi > 0$, and consider the basic open neighbourhood $W(f;y_0, \ldots, y_n;\varepsilon)$; we see that it has non-empty intersection with $B$. Without loss of generality, suppose $\frac{1}{n+1} < \epsi$. Since $\mcal{U}^*_{n+1}$ is a cover of $X^{n+1}$, let $\xi \in X^{n+1}$ be such that $(y_0, \ldots, y_n) \in U_\xi$. Then $(\forall i \leq n)\abs{g_\xi(y_i) - f(y_i)} < \frac{1}{n+1} < \epsi$.

	The converse is more difficult to prove. We do not actually need it for our proof that \lind $\Sigma$ implies Grothendieck. Nonetheless, for the sake of completeness and exposure to $C_p$-theory, here it is. Suppose that $t(C_p(X)) \leq \kappa$ and fix $n < \omega$. Let $\mcal{V}$ be an open cover of $X^{n+1}$. Let $\mcal{E}$ be the family of all finite \emph{$\mcal{V}$-small} families, where a family $\mcal{W}$ of open subsets of $X$ is \emph{$\mcal{V}$-small} if for any $W_0, \ldots, W_n \in \mcal{W}$, there is some $V \in \mcal{V}$ such that $W_0\times \cdots \times W_n \subseteq V$. For each $\mcal{W} \in \mcal{E}$, define
	\begin{linenomath*}  
	\[A_\mcal{W} = \left\{f \in C_p(X) : f\left(X\setminus \Union \mcal{W}\right) = \{0\}\right\}\]
	\end{linenomath*} 
	 and let $A = \Union_{\mcal{W} \in \mcal{E}}A_\mcal{W}$. We claim that $\clsr{A} = C_p(X)$.

	 Indeed, let $f \in C_p(X)$ and let $K$ be a finite subset of $X$. Take a finite family $\mcal{Q}_K$ of open sets in $X$ with the following property: for any $(y_0, \ldots, y_n) \in K^{n+1}$, there are $Q_0, \ldots, Q_n \in \mcal{Q}_K$ such that $y_i \in Q_i$ for each $i \leq n$ and $Q_0 \times \cdots \times Q_n \subseteq G$ for some $G \in \mcal{V}$. Clearly, $K \subseteq \Union \mcal{Q}_K$. For each $x \in K$, define $W_x = \Insect\{V \in \mcal{Q}_K : x\in V\}$. Now, let $\mcal{W}_K = \st{W_x}{x \in K}$. Again, $K \subseteq \Union \mcal{W}_K$. We now see that $\mcal{W}_K$ is $\mcal{V}$-small: consider a set of the form $W_{x_0} \times \cdots \times W_{x_n}$. By construction, there are $Q_0, \ldots, Q_n \in \mcal{Q}_K$ such that $(\forall i \leq n)(x_i \in Q_i)$ and $Q_0 \times \cdots \times Q_n \subseteq G$ for some $G \in \mcal{V}$. Since $W_{x_i} \subseteq Q_i$ for each $i \leq n$, we have $W_{x_0}\times \cdots \times W_{x_n} \subseteq G$, proving the claim.

	 Now, by complete regularity, take a function $g \in C_p(X)$ such that $\rstrct{f}{K} = \rstrct{g}{K}$ and $g\left(X\setminus\Union\mcal{W}\right) = \{0\}$. The point is that $K$ is finite and $K \subseteq \Union \mcal{W}_K$. So we can certainly get a continuous real-valued function $h$ which is $1$ on $K$ and $0$ on $X\setminus \Union\mcal{W}$. Then we can add some continuous real-valued functions to $h$ to get the desired values on $K$, since a finite sum of continuous functions is continuous. Clearly, $g \in A_{\mcal{W}_K} \subseteq A$ and $g$ lies in all basic neighbourhoods of $f$ based on $K$ (i.e. basic neighbourhoods of the form $W(f;x_0,...,x_n,\epsi)$ where $x_0,...,x_n\in K$ and $\epsi>0$). Thus, $f \in \clsr{A}$ and $\clsr{A} = C_p(X)$.

	 If we take $\mathbf{1}$ to be the function identically $1$ we have $\mathbf{1} \in \clsr{A}$. Since $t(C_p(X)) \leq \kappa$, there is a $B \subseteq A$ such that $\mathbf{1} \in \clsr{B}$ and $\crd{B} \leq \kappa$. Then, there is a subfamily $\mcal{E}_0 \subseteq \mcal{E}$ for which $B \subseteq \Union_{\mcal{W} \in \mcal{E}_0}A_\mcal{W}$ and $\crd{\mcal{E}_0} \leq \kappa$. Given any $\mcal{W} \in \mcal{E}_0$, we introduce the following notation: for each $V = (Q_0, \ldots, Q_n) \in \mcal{W}^{n+1}$, we fix $G_V \in \mcal{V}$ such that $V \subseteq G_V$ and then define $\mcal{V}_\mcal{W} = \st{G_V}{V \in \mcal{W}^{n+1}}$. Since the family $\mcal{V}_\mcal{W}$ is finite and $\crd{B}\leq \kappa$, then $\mcal{V}^* = \Union_{\mcal{W}\in\mcal{E}_0}\mcal{V}_\mcal{W}$  always has cardinality $\leq \kappa$.

	 It remains to show that $\mcal{V}^*$ covers $X^{n+1}$. Take $(x_0, \ldots, x_n) \in X^{n+1}$ and let $U = \st{f_p(X)}{(\forall i \leq n)(f(x_i)>0)}$. Notice that $U$ is open in $C_p(X)$ and that $\mathbf{1} \in U$. Since $\mathbf{1} \in \clsr{B}$ and $B \subseteq \Union_{\mcal{W} \in \mcal{E}_0}A_\mcal{W}$, there is a $\mcal{W}_0 \in \mcal{E}_0$ such that $U \insect A_{\mcal{W}_0} \neq \emptyset$. Now take $g \in U \insect A_{\mcal{W}_0}$; then $g(x_i) > 0$ for each $i \leq n$ and $g(x) = 0$ for $x \in X\setminus \Union \mcal{W}_0$. Thus, $x_i \in \Union \mcal{W}_0$ for each $i \leq n$. Take $Q_i \in \mcal{W}_0$ such that $x_i \in Q_i$, $i \leq n$. Then $(x_0, \ldots, x_n) \in Q = Q_0 \times \cdots \times Q_n \subseteq G_V \in \mcal{V}_{\mcal{W}_0} \subseteq \mcal{V}^*$.
\end{proof}

Our plan is to show \lind $p$-spaces are weakly \groth and then use Proposition \ref{prop:gspace} to show they and (hence) \lind $\Sigma$-spaces are \groth. First, we introduce some important definitions.

\begin{defi}
	A topological space is a \emph{$k$-space} if subspaces are closed if and only if their intersection with each compact subspace is closed.
\end{defi}

We will show

\begin{prop}[\cite{Arhangelskii1997a}]\label{prop:kspacewgroth}
	Every $k$-space is weakly \groth.
\end{prop}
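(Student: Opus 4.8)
The plan is to unwind the definitions: $X$ being weakly \groth means $C_p(X)$ is a $g$-space, so I must show that every $A \subseteq C_p(X)$ which is countably compact in $C_p(X)$ is relatively compact. First I would observe, exactly as in the proof of \groth's Theorem, that such an $A$ is pointwise bounded: if some $x\in X$ admitted $f_n\in A$ with $\abs{f_n(x)}>n$, the set $\{f_n : n<\omega\}$ would have no limit point in $C_p(X)$. Fixing witnesses $M_x$, the closure of $A$ in $\reals^X$ is a closed subset of the compact product $\prod_{x\in X}[-M_x,M_x]$, hence compact. Thus it suffices to prove that this closure is contained in $C_p(X)$, for then it coincides with the closure of $A$ in $C_p(X)$ and the latter is compact. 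Here is where the $k$-space hypothesis enters: a real-valued function on $X$ is continuous if and only if its restriction to every compact $K\subseteq X$ is continuous (the standard reformulation of the $k$-space property, obtained by testing closedness of the preimages $g^{-1}(C)$ against compact sets). So I fix a compact $K\subseteq X$ and an element $g$ of the $\reals^X$-closure of $A$, and aim to show $\rstrct{g}{K}\in C_p(K)$.

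Next I would push the whole problem down to $K$ via the restriction map $\pi_K\colon C_p(X)\to C_p(K)$, which is continuous by Lemma \ref{lem:rstrct}. The key claim is that $B:=\pi_K(A)$ is countably compact in $C_p(K)$. To see this, take a countably infinite $Y\subseteq B$, enumerate it as $\{g_n : n<\omega\}$ with the $g_n$ \emph{distinct}, and choose $h_n\in A$ with $\pi_K(h_n)=g_n$; the $h_n$ are then distinct, so $\{h_n : n<\omega\}$ is an infinite subset of $A$ and has a limit point $h\in C_p(X)$. Continuity of $\pi_K$ then forces $\pi_K(h)$ to be a limit point of $Y$: any neighbourhood $U$ of $\pi_K(h)$ pulls back to a neighbourhood of $h$ meeting $\{h_n\}$ infinitely often, and since the $g_n$ are distinct this yields infinitely many points of $Y$ in $U$. (The distinctness arranged at the outset is exactly what prevents the non-injectivity of $\pi_K$ from collapsing the limit point, and I expect this bookkeeping to be the main subtlety.) Now $K$ is compact, hence countably compact, so \groth's Theorem applies to $B\subseteq C_p(K)$: being countably compact in $C_p(K)$, $B$ is relatively compact there, i.e.\ its closure in $C_p(K)$ is compact. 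That closure is therefore closed in $\reals^K$, whence the closure of $B$ in $\reals^K$ is already contained in $C_p(K)$.

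Finally I would conclude. The coordinate restriction $\reals^X\to\reals^K$ is continuous, and $g$ lies in the $\reals^X$-closure of $A$, so $\rstrct{g}{K}$ lies in the $\reals^K$-closure of $B=\pi_K(A)$, which by the previous paragraph sits inside $C_p(K)$. Hence $\rstrct{g}{K}$ is continuous. As $K$ was an arbitrary compact subset, the $k$-space reformulation gives $g\in C_p(X)$. Therefore the $\reals^X$-closure of $A$ is contained in $C_p(X)$ and is compact, so $A$ is relatively compact and $C_p(X)$ is a $g$-space. The conceptual engine is \groth's Theorem applied compact-set-by-compact-set, with the $k$-space property gluing the local continuity back into global continuity; the one genuinely delicate step is the transfer of countable compactness through the (non-injective) restriction map $\pi_K$.
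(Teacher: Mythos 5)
Your proof is correct and follows essentially the same route as the paper's: establish compactness of the closure of $A$ in $\reals^X$ via pointwise boundedness, apply \groth's Theorem to the image of $A$ under the restriction map to each compact $K\subseteq X$, and then use the $k$-space property (continuity tested on compact subspaces) to conclude that every element of the closure is continuous. If anything, your treatment of the transfer of countable compactness through the non-injective map $\pi_K$ (the distinctness bookkeeping) is more careful than the paper's, which cites its proposition on continuous images of countably compact spaces even though $A$ is only countably compact \emph{in} $C_p(X)$.
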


\begin{thm}\label{lindpsk}
	Every \lind $p$-space is a $k$-space.
\end{thm}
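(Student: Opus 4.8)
The plan is to reduce the statement to a standard fact about perfect maps. Since $X$ is a \lind $p$-space, there is a perfect surjection $f\colon X \to M$ onto a space $M$ with a countable base; in particular $M$ is first countable. It therefore suffices to prove the more general assertion that the domain of a perfect map onto a first countable space is a $k$-space. The one external ingredient I would use is the standard fact (see \cite{Engelking1989}) that a perfect map pulls compact sets back to compact sets: if $C \subseteq M$ is compact, then $f^{-1}(C)$ is compact. This follows quickly from the compactness of the fibers together with the closedness of $f$ (for each $c \in C$ cover the compact fiber $f^{-1}(c)$ by finitely many members of a given cover, pass to the corresponding saturated open set using that $f$ is closed, and then extract a finite subcover of $C$).

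With this in hand, let $A \subseteq X$ satisfy the hypothesis that $A \insect K$ is closed for every compact $K \subseteq X$; I must show $A$ is closed. Arguing by contradiction, suppose there is some $x \in \clsr{A}\setminus A$, and set $y = f(x)$ and $F = f^{-1}(y)$, which is compact. Then $A \insect F$ is closed (it is $A$ intersected with the compact set $F$), hence compact, and it omits $x$. Using regularity of $X$, I would choose an open set $U \ni x$ whose closure $\clsr{U}$ misses the compact set $A \insect F$.

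Next I would manufacture a suitable compact set in which to trap a sequence from $A$. Fixing a decreasing countable local base $\{W_n\}_{n<\omega}$ at $y$ in $M$ and putting $V_n = f^{-1}(W_n)$, each $U \insect V_n$ is an open neighbourhood of $x$, so since $x \in \clsr{A}$ I can pick $a_n \in A \insect U \insect V_n$. By construction $f(a_n) \in W_n$, so $f(a_n) \to y$, and hence $C = \{f(a_n) : n<\omega\}\union\{y\}$ is compact in $M$. Then $K = f^{-1}(C)$ is compact by the pullback fact, and $\{a_n : n<\omega\} \subseteq A \insect K$, which is closed by hypothesis and therefore a compact space. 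Consequently the sequence $\langle a_n : n<\omega\rangle$ has a cluster point $z \in A \insect K$. Since every $a_n$ lies in $U$, we get $z \in \clsr{U}$; and since $f(a_n) \to y$ in the Hausdorff space $M$, continuity forces $f(z) = y$, i.e. $z \in F$. Thus $z \in A \insect F$, contradicting $\clsr{U} \insect (A \insect F) = \emptyset$. This contradiction shows $A$ is closed, so $X$ is a $k$-space.

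The main obstacle is the ``fibre direction'': the hypothesis $x \in \clsr{A}$ only produces points of $A$ whose images converge to $y = f(x)$, not points of $A$ converging to $x$ itself, so one cannot directly conclude $x \in \clsr{A \insect K}$. The device that resolves this is to confine the chosen points $a_n$ to a fixed neighbourhood $U$ of $x$ with $\clsr{U}$ disjoint from $A \insect F$; then any cluster point of the $a_n$ is simultaneously forced into $A \insect F$ (via pullback-compactness and continuity of $f$) and into $\clsr{U}$, which is the desired contradiction.
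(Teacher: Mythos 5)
Your proof is correct, but it takes a genuinely different route from the paper's. Both arguments begin identically, with the perfect surjection $f\colon X\to M$ onto a second countable space $M$ supplied by the definition of a \lind $p$-space; they diverge at the key lemma. The paper observes that $M$, being metrizable, is a $k$-space, and then invokes the general fact (Theorem 3.7.25 of \cite{Engelking1989}) that a perfect pre-image of a $k$-space is a $k$-space; its proof of that fact is abstract, passing through the $k$-ification functor $X\mapsto kX$, the induced perfect map $kf$, and the observation that a one-to-one perfect map is a homeomorphism. You instead prove directly the special case that the domain of a perfect map onto a first countable space is a $k$-space: you use the standard fact that perfect maps pull compact sets back to compact sets, trap a sequence of points of $A$ inside a neighbourhood $U$ of $x$ whose closure misses the compact set $A\cap f^{-1}(y)$, and then force a cluster point of that sequence into both $\overline{U}$ and $A\cap f^{-1}(y)$ --- a contradiction. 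All steps check out: the separation of $x$ from $A\cap f^{-1}(y)$ is available under the paper's standing assumption that spaces are completely regular Hausdorff, and a cluster point $z$ of $\langle a_n\rangle$ must satisfy $f(z)=y$ because $f(z)$ is a cluster point of $\langle f(a_n)\rangle$, which converges to $y$ in the Hausdorff space $M$. As for what each approach buys: yours is elementary and self-contained, needing neither the $k$-ification machinery nor the preliminary fact that metrizable spaces are $k$-spaces; the paper's lemma is more general and reusable, since it applies to perfect pre-images of arbitrary $k$-spaces rather than only of first countable ones.
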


Actually, the ``\lind'' is superfluous, but it is all we need and its inclusion makes the proof much easier. Metrizable spaces are easily seen to be $k$, since if a subspace $W$ is not closed, there is a sequence converging from $S$ to a point outside. That sequence together with its limit are a compact set with non-closed intersection with $S$. So all we have then to do is show:

\begin{lem}
	A perfect pre-image of a $k$-space is a $k$-space.
\end{lem}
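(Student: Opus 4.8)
The plan is to let $f\colon X\to Y$ be the given perfect map with $Y$ a $k$-space, fix a set $W\subseteq X$ whose intersection with every compact subspace is closed, and prove that $W$ is itself closed. The engine of the argument is the standard fact that a perfect map pulls compact sets back to compact sets: for every compact $C\subseteq Y$ the pre-image $f^{-1}(C)$ is compact, which follows from closedness of $f$ together with compactness of the fibres (see \cite{Engelking1989}). First I would record a sub-claim: \emph{any} $W$ with the stated intersection property has $f(W)$ closed in $Y$. Indeed, for compact $C\subseteq Y$ the set $W\cap f^{-1}(C)$ is closed in $X$ by hypothesis, hence is a closed subset of the compact set $f^{-1}(C)$, hence compact; its continuous image $f\bigl(W\cap f^{-1}(C)\bigr)=f(W)\cap C$ is therefore compact, and so closed in the Hausdorff space $Y$. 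Since $C$ ranged over all compact subsets of $Y$ and $Y$ is a $k$-space, $f(W)$ is closed.

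The main obstacle is that ``$f(W)$ is closed'' gives only $\clsr{W}\subseteq f^{-1}(f(W))$ and says nothing about closedness within a single fibre, which is exactly where a non-saturated $W$ could fail to be closed. To handle this, suppose toward a contradiction that some $x_0\in\clsr{W}\setminus W$ exists, and set $y_0=f(x_0)$ and $F=f^{-1}(y_0)$, a compact fibre. Then $W\cap F$ is compact and does not contain $x_0$, so by Hausdorffness of $X$ I can separate them by disjoint open sets $U\ni x_0$ and $V\supseteq W\cap F$.

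The crux is then to show $x_0\in\clsr{W\setminus V}$: if not, some open $O\subseteq U$ about $x_0$ would satisfy $O\cap(W\setminus V)=\emptyset$, whence $O\cap W\subseteq V$; but $O\subseteq U$ is disjoint from $V$, forcing $O\cap W=\emptyset$ and contradicting $x_0\in\clsr{W}$. Now put $W'=W\setminus V$. It still meets every compact set in a closed set, because $W'\cap K=(W\cap K)\cap(X\setminus V)$ is an intersection of closed sets; and $W'\cap F=(W\cap F)\setminus V=\emptyset$ since $W\cap F\subseteq V$, so $y_0\notin f(W')$. Applying the sub-claim to $W'$ shows $f(W')$ is closed; but $x_0\in\clsr{W'}$ and continuity give $y_0=f(x_0)\in\clsr{f(W')}=f(W')$, contradicting $y_0\notin f(W')$. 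Hence no such $x_0$ exists, $W$ is closed, and $X$ is a $k$-space. The only genuinely delicate points are the pull-back-of-compacts fact, which I would either cite or prove via the fibrewise finite-subcover argument using closedness of $f$, and the verification that $x_0\in\clsr{W\setminus V}$; everything else is bookkeeping with closed sets.
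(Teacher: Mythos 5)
Your proof is correct, but it takes a genuinely different route from the paper's. The paper follows Engelking's functorial argument: it passes to the $k$-ification $kX$ (same underlying set, finer topology in which a set is closed iff its trace on every compact set is closed), asserts that the induced map $kf\colon kX\to kY=Y$ is perfect because $f$ is, deduces that the identity $k_X\colon kX\to X$ is a perfect bijection, hence a homeomorphism, so $X$ is a $k$-space; the real work there is hidden in the unproved assertion that $kf$ is perfect. Your argument is direct and self-contained: given $W\subseteq X$ meeting every compact set in a closed set, you first push forward to $Y$ (the sub-claim that $f(W)$ is closed, using that perfect maps pull compact sets back to compact sets, the identity $f\bigl(W\cap f^{-1}(C)\bigr)=f(W)\cap C$, and the $k$-property of $Y$), and then handle the only possible failure---closedness within a fibre---by separating the compact set $W\cap F$ from the putative closure point $x_0$ inside the Hausdorff space $X$, replacing $W$ by $W'=W\setminus V$ so that $y_0\notin f(W')$ while still $x_0\in\overline{W'}$, and applying the sub-claim to $W'$ to get a contradiction via $f(\overline{W'})\subseteq\overline{f(W')}=f(W')$. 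All the steps check out: $W'$ inherits the closed-trace-on-compacta property because $W'\cap K=(W\cap K)\cap(X\setminus V)$, the verification that $x_0\in\overline{W\setminus V}$ is sound, and the degenerate case $W\cap F=\emptyset$ goes through with $V=\emptyset$. What each approach buys: the paper's route is shorter on the page and showcases the $k$-ification functor, which is a reusable conceptual tool; yours trades that machinery for an elementary argument whose only external input is the standard fact that perfect pre-images of compact sets are compact, and it makes visible exactly where each hypothesis (compact fibres, closedness of $f$, Hausdorffness of $X$, the $k$-property of $Y$) is actually used.
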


This is Theorem 3.7.25 in \cite{Engelking1989}. We shall sketch his proof, but it's not $C_p$-theory, so the reader is welcome to skip it.

\begin{proof}
Given an arbitrary Hausdorff space $X$, one can put a finer topology on $X$ by declaring closed all subsets of $X$ that have closed intersections with each compact subspace of $X$. This new space is called $kX$ and can be shown to be a $k$-space. Every continuous map $f$ from $X$ to a Hausdorff space $Y$ induces a continuous map $kf$ from $kX$ to $kY$, by assigning to $x \in kX$ the image $f(x) \in kY$. It is not hard to show that $kf$ is perfect since $f$ is. Define a function $k_X: kX \to X$ by $k_X(x) = x$. Then $kf = f\circ k_X$ and it follows that $k_X$ is perfect. But $k_X$ is one-to-one, so it's a homeomorphism, and so $X$ is a $k$-space. 
\end{proof}

We shall also need:

\begin{lem}[{\cite[3.7.25]{Engelking1989}}]
	A mapping from a $k$ space $X$ to a space $Z$ is continuous if and only if for any compact $K$ included in $X$, the restriction $f|Z$ is continuous.
\end{lem}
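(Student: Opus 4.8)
The plan is to prove the nontrivial direction by reducing continuity of $f$ to the $k$-space characterization of closed sets, applied to preimages of closed sets under $f$. (I read the restriction in the statement as $f|K$, the restriction of $f$ to the compact set $K$.) The forward implication is immediate: if $f\colon X \to Z$ is continuous, then for any compact $K \subseteq X$ the restriction $f|K$ is continuous, since restrictions of continuous maps to subspaces are always continuous.

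For the converse, suppose $f|K$ is continuous for every compact $K \subseteq X$. To show $f$ is continuous, it suffices to verify that $f^{-1}(C)$ is closed in $X$ for every closed $C \subseteq Z$. Fix such a $C$. The key observation is that for any compact $K \subseteq X$,
$$ f^{-1}(C) \insect K = (f|K)^{-1}(C). $$
Since $f|K$ is continuous and $C$ is closed, the right-hand side is closed in $K$. Thus $f^{-1}(C)$ has closed intersection with every compact subspace of $X$.

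Now I would invoke the hypothesis that $X$ is a $k$-space: by definition, a subset of $X$ is closed precisely when its intersection with each compact subspace is closed. Applying this to $f^{-1}(C)$ yields that $f^{-1}(C)$ is closed in $X$. As $C$ was an arbitrary closed set, $f$ is continuous.

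I do not expect any genuine obstacle here: the entire content is the translation between ``$f^{-1}(C)$ meets $K$ in a closed set'' and ``the restriction $f|K$ pulls $C$ back to a closed set,'' together with a direct appeal to the definition of $k$-space. The only point requiring a little care is the convention that ``closed'' for the intersection $f^{-1}(C) \insect K$ means closed as a subspace of $K$; since the ambient spaces are Hausdorff, compact subspaces are closed in $X$, so this is equivalent to being closed in $X$, and the definition of $k$-space given above applies without ambiguity.
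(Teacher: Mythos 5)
Your proof is correct and follows essentially the same route as the paper's: both arguments rest on the identity $f^{-1}(C)\cap K=(f|K)^{-1}(C)$ for closed $C\subseteq Z$ and compact $K\subseteq X$, followed by a direct appeal to the definition of a $k$-space. The only differences are cosmetic — you also spell out the trivial forward direction and the (harmless) distinction between closed in $K$ and closed in $X$, which the paper leaves implicit.
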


\begin{proof}
	Take any closed $B \subseteq Y$. For any compact $K \subseteq X$, we have $f^{-1}(B) \insect K = (f|K)^{-1}(B)$, which shows $f^{-1}(B)$ is closed, since $X$ is a $k$-space. Thus $f$ is continuous.
\end{proof}

The following result is well-known:

\begin{prop}\label{contcount}
The continuous image of a countably compact space is countably compact.
\end{prop}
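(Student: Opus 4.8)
The plan is to argue directly from the limit-point formulation of countable compactness given in Definition~\ref{ccpt}(b), which has the advantage of needing no separation hypotheses on the image. Let $f\colon X \to Y$ be continuous with $X$ countably compact. Since an arbitrary continuous image is the surjective image onto its range, it suffices to treat the case where $f$ is onto and show that every infinite subset of $Y$ has a limit point in $Y$.

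First I would reduce to a countable situation. Given an infinite $B \subseteq Y$, choose a countably infinite set of \emph{distinct} points $\{b_n : n < \omega\} \subseteq B$, and for each $n$ pick a preimage $a_n \in f^{-1}(b_n)$. Because the $b_n$ are pairwise distinct, so are the $a_n$, so $A = \{a_n : n < \omega\}$ is an infinite subset of $X$. Countable compactness of $X$ then provides a limit point $x \in X$ of $A$.

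The key step is to push this limit point through $f$: I claim $f(x)$ is a limit point of $B$. Given any open $V$ with $f(x) \in V$, continuity makes $f^{-1}(V)$ an open neighbourhood of $x$, so by the choice of $x$ it contains infinitely many of the (distinct) points $a_n$; applying $f$ shows $V$ contains the corresponding $b_n = f(a_n)$, which form an infinite set of distinct elements of $B$. Hence every neighbourhood of $f(x)$ meets $B$ in infinitely many points, i.e.\ $f(x) \in Y$ is a limit point of $B$, as required.

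I expect no serious obstacle; the only points needing care are bookkeeping ones. The distinctness of the $b_n$ is what guarantees that ``infinitely many $a_n$ lie in $f^{-1}(V)$'' really yields infinitely many distinct members of $B$ in $V$, rather than merely a nonempty intersection. One also notes that the limit point produced is $f(x)$, which automatically lies in the image $Y$, so no closedness or completeness assumption on the image is needed.
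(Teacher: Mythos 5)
Your proof is correct and follows essentially the same route as the paper's: choose one preimage for each point of an infinite subset of the image, take a limit point in $X$ by countable compactness, and push it forward through $f$ using continuity. If anything, your bookkeeping is slightly more careful than the paper's, since you explicitly note that distinctness of the chosen preimages is what converts ``infinitely many $a_n$ in $f^{-1}(V)$'' into infinitely many points of $B$ in $V$, as the limit-point definition requires.
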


\begin{proof}
Let $X$ be countably compact and $f:X\to Y$ a continuous function. Suppose $B$ is an infinite subset of $f(X)$, and let $X'$ be obtained by picking one point from each $f^{-1}(\{b\})$, for each $b \in B$. Then $X'$ is infinite and so has a limit point $x \in X$. Then $f(x)$ is a limit point of $B$, since if $U$ is an open set about $f(x)$, the pre-image $f^{-1}(U)$ is an open set about $x$ and so contains some $a \in X'$. Then $U$ contains $f(a)$, which is in $B$.
\end{proof}

\begin{proof}[Proof of Proposition \ref{prop:kspacewgroth}]
	Let $A$ be countably compact in $C_p(X)$, with $X$ a $k$-space. Since $C_p(X)$ is canonically embedded in $\reals^X$, $A$ is also a subspace of $\reals^X$. By continuity, by Proposition \ref{contcount} the images of $A$ under the natural projections of $\reals^X$ onto $\reals$ are also countably compact in $\reals$. Since $\reals$ is metrizable, it is normal so the images of $A$ have countably compact closures, but then by metrizability they have compact closures. Since $A$ is included in the topological product of these projections, the closure of $A$ in $\reals^X$ is a compact subspace $F$ of $\reals^X$. It remains to show that $F \subseteq C_p(X)$, i.e.~ each member of $f$ is continuous. 

	The restriction map $r$ mapping $\reals^X$ into $\reals^Y$ sends $A$ to a dense subset of $r(F)$, since $r$ is continuous and $A$ is dense in $F$. Since $r$ is continuous, $r(A)$ is countably compact in $C_p(Y)$ by Proposition \ref{contcount}. As a compact space, $Y$ is \groth and so the closure of $r(A)$ in $C_p(Y)$ is compact. But then that closure must equal $r(F)$, since, by continuity of $r$, $r(F)$ is the closure of $r(A)$ in $\reals^X$, so is the smallest closed set including $r(A)$ there. Thus, for each $f \in F$ and each compact $Y \subseteq X$, there is a $g$ in the closure of $A$ such that the restriction of $f$ to $Y$ coincides with the restriction of $g$ to $Y$. Since $X$ is a $k$-space, it follows that $f$ is continuous.
\end{proof}

\begin{defi}
Let $X$ be a topological space.
\begin{enumerate}[label=(\alph*)]
    \item The \textit{density} $d(X)$ of $X$ is the smallest cardinality of a dense subset of $X$.
    \item A \textit{network} of $X$ is a family $\mathcal{A}$ of subsets of $X$ such that given any $x\in X$ and open set $U$ containing $x$, there is an $A\in\mathcal{A}$ satisfying $x\in A\subseteq U$. The \textit{network weight} $nw(X)$ of $X$ is the smallest cardinality of a network of $X$.
    \item The \textit{i-weight} $iw(X)$ of $X$ is the minimal weight of all the spaces onto which $X$ can be continuously mapped. 
\end{enumerate}
\end{defi}

We see that if $X$ is compact, then $nw(X)=w(X)$ (see \cite{ArhangelskiiFunction}): given any Hausdorff space $X$, we can construct a Hausdorff space $Y$ such that $w(Y)\leq nw(X)$ and $Y$ is a continuous one-to-one image of $X$. To see this, let $\mathcal{A}$ be a network of $X$ of size $nw(X)$. Consider all the pairs of elements $A_1,A_2\in\mathcal{A}$ for which there are disjoint open subsets $U_1$ and $U_2$ such that $A_1\subseteq U_1$ and $A_2\subseteq U_2$. It suffices to define the topological space $Y$ as the one with the same underlying set as $X$ and with the topology generated by the all the open sets of $X$ obtained as above. Thus the identity map is as desired. That $nw(X)=w(X)$ for every compact space $X$ follows from the preceding discussion and the well-known fact that a continuous one-to-one map between a compact space and a Hausdorff space is a homeomorphism.  

Some other relatively easy facts regarding these cardinals are the following: $iw(X)\leq nw(X)$, $d(X)\leq w(X)$, $nw(X)=nw(C_p(X)$ and $iw(X)=d(C_p(X))$ for every $X$. See \cite{ArhangelskiiFunction}, p.~26, for a detailed exposition. Now we proceed with two definitions that arise naturally when studying topological function spaces:

\begin{defi}
Let $X$ be a topological space.
\begin{enumerate}[label=(\alph*)]
    \item $X$ is \textit{$\kappa$-monolithic} if $nw(\overline{A})\leq \kappa$ whenever $\abs{A}\leq \kappa$. We say that $X$ is \textit{monolithic} if it is $\kappa$-monolithic for every $\kappa$.
    \item $X$ is \textit{$\kappa$-stable} if for every continuous image $Y$ of $X$ we have: $iw(Y)\leq \kappa$ if and only if $nw(Y)\leq \kappa$. We say that $X$ is \textit{stable} if it is $\kappa$-stable for every $\kappa$.
    
\end{enumerate}
\end{defi}
Notice that being monolithic is inherited by all subspaces. Examples of monolithic spaces include all metric spaces and spaces with a countable network, whereas examples of stable spaces include all compact spaces and \lind $\Sigma$-spaces (as we shall prove below). The following theorem shows that stability and monolithicity are intertwined notions for $X$  and $C_p(X)$. 

\begin{thm}[\arhan]\label{monosta}
$X$ is $\kappa$-stable if and only if $C_p(X)$ is $\kappa$-monolithic. 
\end{thm}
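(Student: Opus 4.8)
The plan is to prove both implications by passing back and forth between $X$ (together with its continuous images) and subspaces of $C_p(X)$, using three tools already at hand: the identities $nw(X)=nw(C_p(X))$ and $iw(X)=d(C_p(X))$, the dual-map embedding of Lemma \ref{lem:dual}, and the fact that both $w$ and $nw$ are monotone under passing to subspaces. Throughout, $\kappa$ is an infinite cardinal; since $iw\le nw$ always holds, $\kappa$-stability of $X$ amounts to the single implication ``$iw(Y)\le\kappa\Rightarrow nw(Y)\le\kappa$'' for every continuous image $Y$ of $X$.

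For the direction $C_p(X)$ $\kappa$-monolithic $\Rightarrow$ $X$ $\kappa$-stable, I would start with a continuous surjection $\phi\colon X\to Y$ satisfying $iw(Y)\le\kappa$. Since $iw(Y)=d(C_p(Y))\le\kappa$, fix a dense $D\subseteq C_p(Y)$ with $\abs{D}\le\kappa$. By Lemma \ref{lem:dual}, the dual map $\Phi_\phi$ restricts to a homeomorphism of $C_p(Y)$ onto a subspace $\Phi_\phi(C_p(Y))\subseteq C_p(X)$, so $\Phi_\phi(D)$ is a subset of $C_p(X)$ of cardinality $\le\kappa$. Monolithicity gives $nw(\clsr{\Phi_\phi(D)})\le\kappa$, the closure taken in $C_p(X)$. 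As $\Phi_\phi(D)$ is dense in $\Phi_\phi(C_p(Y))$, the latter is contained in $\clsr{\Phi_\phi(D)}$, and heredity of $nw$ yields $nw(C_p(Y))=nw(\Phi_\phi(C_p(Y)))\le\kappa$; applying $nw(Y)=nw(C_p(Y))$ completes this direction.

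For the converse, $X$ $\kappa$-stable $\Rightarrow$ $C_p(X)$ $\kappa$-monolithic, take $A\subseteq C_p(X)$ with $\abs{A}\le\kappa$ and aim for $nw(\clsr A)\le\kappa$. The tempting move---diagonalize $A$ to $\Delta_A\colon X\to\reals^A$, put $Y=\Delta_A(X)$, and push $\clsr A$ into $C_p(Y)$---is exactly where the main difficulty lies: a function in $\clsr A$ is automatically constant on the fibres of $\Delta_A$, but its induced factor on $Y$ need not be continuous (that would require $\Delta_A$ to be quotient), so $\clsr A\not\subseteq\Phi_{\Delta_A}(C_p(Y))$ in general, and since $nw$ is \emph{not} preserved on passing to closures of dense subspaces one cannot simply enlarge. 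I would circumvent this using the canonical evaluation map instead. Define $\eta\colon X\to C_p(\clsr A)$ by $\eta(x)(h)=h(x)$; each $\eta(x)$ is continuous on $\clsr A$ (an evaluation, continuous on $C_p(X)$ hence on the subspace $\clsr A$), and $\eta$ is continuous, so $Y:=\eta(X)$ is a continuous image of $X$. The restriction $y\mapsto\rstrct{y}{A}$ sends $Y$ continuously into $\reals^A$ and is injective on $Y$: if $h(x)=h(x')$ for all $h\in A$, then the same holds for all $h\in\clsr A$, since $h\mapsto h(x)-h(x')$ is continuous on $C_p(X)$ and vanishes on $A$, whence $\eta(x)=\eta(x')$. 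Thus $Y$ injects continuously into $\reals^A$, giving $iw(Y)\le\abs{A}\le\kappa$, and $\kappa$-stability yields $nw(Y)\le\kappa$.

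It remains to embed $\clsr A$ into $C_p(Y)$. For $h\in\clsr A$ define $\tilde h\colon Y\to\reals$ by $\tilde h(\eta(x))=h(x)$; this is well defined and continuous, being the evaluation at $h\in\clsr A$ restricted to $Y\subseteq C_p(\clsr A)$, so $\tilde h\in C_p(Y)$. The assignment $h\mapsto\tilde h$ is a homeomorphism of $\clsr A$ onto a subspace of $C_p(Y)$, its inverse $\tilde h\mapsto\tilde h\com\eta$ being coordinatewise continuous as well, whence
\[nw(\clsr A)\le nw(C_p(Y))=nw(Y)\le\kappa,\]
using $nw(Y)=nw(C_p(Y))$ once more. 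This shows $C_p(X)$ is $\kappa$-monolithic. The one genuine obstacle is the failure of the naive diagonal construction noted above; replacing $\Delta_A$ by the evaluation map $\eta$ into $C_p(\clsr A)$ repairs it, for then the embedding of $\clsr A$ into $C_p(Y)$ holds by construction rather than via a quotient hypothesis.
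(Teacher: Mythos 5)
Your proof is correct, and while the forward direction (monolithicity of $C_p(X)$ implies stability of $X$) is essentially the paper's own argument---embed $C_p(Y)$ into $C_p(X)$ via the dual map, take a dense set of size $d(C_p(Y))=iw(Y)\le\kappa$, apply monolithicity and $nw(Y)=nw(C_p(Y))$---your converse takes a genuinely different route. The paper also starts from the diagonal product $f\colon X\to\reals^M$, but it repairs exactly the continuity-of-factors problem you diagnose by passing to the \emph{real quotient topology} $Y^*$ on $f(X)$: since $f^*\colon X\to Y^*$ is R-quotient, the dual image $F=\Phi_{f^*}[C_p(Y^*)]$ is a \emph{closed} subspace of $C_p(X)$ containing $M$, hence containing $\clsr{M}$, giving $nw(\clsr{M})\le nw(F)=nw(C_p(Y^*))=nw(Y^*)\le\kappa$. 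You avoid the R-quotient machinery altogether by replacing the diagonal product with the evaluation map $\eta\colon X\to C_p(\clsr{A})$; then every point of $Y=\eta(X)$ is by construction a continuous function on $\clsr{A}$, so the factored functions $\tilde h$ are just coordinate evaluations on $Y\subseteq C_p(\clsr{A})$ and are automatically continuous, and $\clsr{A}$ embeds into $C_p(Y)$ directly, with no closedness argument needed. The trade-off: the paper's route leans on nontrivial external facts (existence of the R-quotient topology, and closedness of $\Phi_{f^*}[C_p(Y^*)]$ for R-quotient $f^*$, citing Section 0.4 of \cite{ArhangelskiiFunction}, with the paper itself conceding the argument is ``a bit dense''), but it showcases R-quotient maps as a reusable tool, which fits its expository aim; your route is self-contained modulo the same two cardinal identities ($nw(Z)=nw(C_p(Z))$ and $iw(Z)=d(C_p(Z))$) and is arguably cleaner. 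One point of care common to both proofs: the paper's stated definition of $iw$ (minimal weight of continuous images) must be read as the minimal weight of spaces onto which the space \emph{condenses} (continuous bijection)---otherwise it trivializes---and that is exactly what your injectivity argument for $y\mapsto\rstrct{y}{A}$ supplies.
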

\begin{proof}
Assume $C_p(X)$ is $\kappa$-monolithic and let $f:X\to Y$ be a continuous surjection. Since always $iw(Y)\leq nw(Y)$, we also assume that $iw(Y)\leq \kappa$. By Lemma \ref{lem:rstrct}, $C_p(Y)$ is homeomorphic to a subspace of $C_p(X)$ and so it's $\kappa$-monolithic. In particular, by looking at a dense subset of $C_p(Y)$ and using the definition of $\kappa$-monolithicity, $nw(C_p(Y))\leq d(C_p(Y))$. Using that $iw(Y)=d(C_p(Y))$ and $nw(Y)=nw(C_p(Y)$, we obtain $nw(Y) \leq iw(Y)$ as desired. Conversely, assume that $X$ is $\kappa$-stable and let $M\subseteq C_p(X)$ be such that $\abs{M}<\kappa$. Define $f:X\to \reals^M$ as the diagonal product of $M$, i.e. $f(x)=\langle g(x) : g\in M\rangle$, and let $Y=f(X)$. By our choice, $w(Y) \leq \abs{M}\leq \kappa$ as in the proof of Theorem \ref{thm:crds}.

The rest of the proof is a bit dense. Although the theorem is important, the reader is unlikely to use the \emph{real quotient topology}, so it is not unreasonable to take the rest on faith. But we shall proceed.

Let $Y^{*}$ be the topological space with the same underlying set as $Y$ but endowed with the \textit{real quotient topology corresponding to $f$}, i.e. the topology on $Y^*$ is the strongest completely regular topology for which $f$ is continuous (see \cite{ArhangelskiiFunction}, p.~14). Then the identity $i:Y^{*}\to Y$ is a continuous bijection. By the definition of the i-weight, $iw(Y^{*})\leq w(Y) \leq \kappa$. 

Conversely, since $f$ defines a continuous surjection from $X$ onto $Y^{*}$, i.e. the real quotient map $f^{*}=i^{-1}\circ f$, and $X$ is $\kappa$-stable, $nw(Y^{*})\leq \kappa$ and $nw(C_p(Y^{*}))=nw(Y^{*})\leq \kappa$. Also, the space  $C_p(Y^*)$ is homeomorphic to the closed subspace $F=\Phi_{f^*}[C_p(Y^*)]=\{g\circ f^* : g\in C_p(Y^*)\}$ of $C_p(X)$ since $f^*$ is a real quotient map (see Section 0.4 of \cite{ArhangelskiiFunction} for a detailed discussion of real quotient maps). Now let $g\in M$ and notice that $g=\pi_g\circ f=\pi_g \circ i \circ f^*$ where $\pi_g:\reals^M\to \reals$ is the projection $\pi_g(\langle h(x) : h\in M\rangle)=g(x)$. Also, $\pi_g \circ i:Y^*\to \reals$ is continuous, so $g\in F$ and thus $M\subseteq F$. Thus $\overline{M}\subseteq F$ since $F$ is closed, and we conclude by noticing that $nw(\overline{M})\leq nw(F) = nw(C_p(Y^*))\leq \kappa$. 
\end{proof}

Although we will not use this fact here, it is worth noting that the previous theorem is self-dual, i.e.: $X$ is $\kappa$-monolithic if and only if $C_p(X)$ is $\kappa$-stable. See \cite{ArhangelskiiFunction}, p.~78, for a proof. 

The following diagram-chasing lemma appears in \cite{ArhangelskiiFunction}, p.~80, and is used to prove that \lind $\Sigma$-spaces are stable. We omit the proof here.

\begin{lem}
Let $f: X\to Y$, $g: X\to Z$ and $h: Z\to T$ be continuous surjections. Also assume that $f$ is perfect and that $h$ is a bijection. Then $Z$ is a continuous image of a closed subspace of $Y \times T$. 
\end{lem}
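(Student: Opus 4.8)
The plan is to form the diagonal map $\Delta\colon X \to Y \times T$ defined by $\Delta(x) = (f(x), h(g(x)))$ and to prove two things: that its image $W = \Delta(X)$ is closed in $Y \times T$, and that $g$ factors continuously through $\Delta$ onto $Z$. Granting both, $W$ is the desired closed subspace of $Y\times T$ and the factoring map is the desired continuous surjection $W \to Z$.

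First I would show that $\Delta$ is a perfect map, from which the closedness of $W$ is immediate, since a perfect map is closed and hence carries $X$ onto a closed set. The fibres cause no trouble: $\Delta^{-1}(y,t) = f^{-1}(y) \cap (h\circ g)^{-1}(t)$ is a closed subset of the compact set $f^{-1}(y)$, hence compact. The subtle point is closedness, and the cleanest route is the graph trick, which lets us get perfectness even though $h \circ g$ is merely continuous rather than perfect. Since $T$ is Hausdorff and $h\circ g$ is continuous, the graph $\Gamma = \{(x, h(g(x))) : x \in X\}$ is closed in $X \times T$, and $x \mapsto (x, h(g(x)))$ is a homeomorphism of $X$ onto $\Gamma$. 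The product $f \times \mathrm{id}_T\colon X \times T \to Y \times T$ is perfect, being a product of perfect maps (\cite{Engelking1989}; $\mathrm{id}_T$ is trivially perfect), and the restriction of a perfect map to a closed subspace is again perfect. Composing the graph homeomorphism with this restriction exhibits $\Delta$ as perfect, so $W = \Delta(X)$ is closed in $Y \times T$. I expect this to be the main obstacle, not because any single ingredient is deep, but because it requires assembling the three standard properties of perfect maps (preservation under products, restriction to closed subspaces, and closedness of images) in the right order.

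Second I would produce the factoring, and here the injectivity of $h$ is exactly what makes the argument go through. If $\Delta(x_1) = \Delta(x_2)$, then $f(x_1) = f(x_2)$ and $h(g(x_1)) = h(g(x_2))$, and since $h$ is a bijection the latter forces $g(x_1) = g(x_2)$. Thus $g$ is constant on each fibre $\Delta^{-1}(w)$, so there is a well-defined map $\bar{g}\colon W \to Z$ with $\bar{g} \circ \Delta = g$. Because $\Delta\colon X \to W$ is a closed continuous surjection (its corestriction to $W$ remains closed), it is a quotient map, so $\bar{g}$ is continuous by the universal property of quotient maps, and it is surjective because $g$ is. This realizes $Z = \bar{g}(W)$ as a continuous image of the closed subspace $W \subseteq Y \times T$, completing the proof.
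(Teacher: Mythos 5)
Your proof is correct, and every step is sound: the closed-graph trick plus the facts that products of perfect maps are perfect, that restrictions of perfect maps to closed subspaces are perfect, and that closed continuous surjections are quotient maps, together yield that $\Delta$ is perfect, $W=\Delta(X)$ is closed, and $\bar{g}$ is a continuous surjection onto $Z$ (injectivity of $h$ being exactly what makes $\bar{g}$ well defined). Note that the paper itself omits the proof, deferring to \cite{ArhangelskiiFunction}, p.~80; your argument is precisely the standard diagonal-product proof given there, so there is nothing substantive to compare beyond confirming that your assembly of the ingredients is the intended one.
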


\begin{thm}\label{lindssta}
\lind $\Sigma$-spaces are stable.
\end{thm}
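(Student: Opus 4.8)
The plan is to reduce the statement to Lindel\"of $p$-spaces and then exploit the diagram-chasing lemma stated just above. First note that stability passes to continuous images directly from its definition: if $X$ is stable and $S$ is a continuous image of $X$, then any continuous image $Z$ of $S$ is again a continuous image of $X$, so the equivalence $iw(Z)\le\kappa \iff nw(Z)\le\kappa$ holds for every continuous image $Z$ of $S$, i.e.\ $S$ is stable. Since every Lindel\"of $\Sigma$-space is by definition a continuous image of a Lindel\"of $p$-space, it therefore suffices to prove that every Lindel\"of $p$-space is stable.

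So let $X$ be a Lindel\"of $p$-space, witnessed by a perfect surjection $f\colon X\to M$ onto a second countable space $M$, and fix an infinite cardinal $\kappa$. Because $iw(Y)\le nw(Y)$ always holds, the implication $nw(Y)\le\kappa \Rightarrow iw(Y)\le\kappa$ is automatic for every continuous image $Y$ of $X$; the content of $\kappa$-stability is the reverse implication. So let $g\colon X\to Z$ be a continuous surjection with $iw(Z)\le\kappa$; I must show $nw(Z)\le\kappa$. By the definition of i-weight there is a continuous bijection $h\colon Z\to T$ onto a space $T$ with $w(T)\le\kappa$.

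Now I would apply the preceding diagram-chasing lemma to $f\colon X\to M$, $g\colon X\to Z$ and $h\colon Z\to T$: here $f$ is perfect and $h$ is a bijection, so all hypotheses are met, and the lemma yields that $Z$ is a continuous image of a closed subspace $W$ of $M\times T$. It remains to bound the network weight. Since $M$ is second countable and $w(T)\le\kappa$, we have $w(M\times T)\le w(M)\cdot w(T)\le\kappa$, whence $nw(M\times T)\le w(M\times T)\le\kappa$. Network weight does not increase when passing to a subspace, so $nw(W)\le\kappa$, and it does not increase under continuous images (the images of a network form a network), so $nw(Z)\le nw(W)\le\kappa$. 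This establishes $\kappa$-stability of $X$ for every $\kappa$, hence $X$ is stable, which by the reduction above proves the theorem.

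The main obstacle is packaging the two structural facts --- that $X$ is a perfect preimage of a second countable space and that $Z$ condenses onto a space of weight $\le\kappa$ --- into a single representation of $Z$ as a continuous image of a closed subspace of a product of weight $\le\kappa$; this is exactly what the diagram-chasing lemma accomplishes, and everything else is routine monotonicity of the cardinal functions $w$ and $nw$ under finite products, closed subspaces, and continuous images.
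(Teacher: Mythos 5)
Your proof is correct and takes essentially the same approach as the paper's: both hinge on the trivial reduction via continuous images (you phrase it as preservation of stability, the paper as the fact that continuous images of Lindel\"of $\Sigma$-spaces are Lindel\"of $\Sigma$), followed by the same application of the diagram-chasing lemma to the perfect map onto a second countable space, the surjection onto $Z$, and the condensation onto $T$, and the same monotonicity of $nw$ under products, closed subspaces, and continuous images. The difference in packaging (reducing first to Lindel\"of $p$-spaces versus proving $nw(Z)\le iw(Z)$ directly for Lindel\"of $\Sigma$-spaces) is purely cosmetic.
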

\begin{proof}
Let $Z$ be a \lind $\Sigma$-space. Since \lind $\Sigma$-spaces are continuous images of \lind $p$-spaces, any continuous image of $Z$ is also \lind $\Sigma$ so it suffices to see that $nw(Z)\leq iw(Z)$. \\

Suppose $iw(Z)=\kappa$ and let $h:Z\to T$ be a continuous bijection and suppose $w(T)\leq \kappa$. Again, by definition of \lind $\Sigma$-spaces, there is a second countable space $Y$, a space $X$, a perfect surjective map $f: X\to Y$ and a continuous surjection $g: X\to Z$. By the previous lemma, $Z$ is the continuous image of a closed subspace of $Y\times T$. Thus $nw(Z)\leq nw(Y\times T)\leq \kappa$ as desired. 
\end{proof}

\begin{prob}
    Is there a connection between $C_p$-theoretic stability (or monolithicity) and model-theoretic stability?
\end{prob}

\begin{lem}\label{ctmono}
A countably tight monolithic compact space is Fréchet-Urysohn. 
\end{lem}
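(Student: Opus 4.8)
The plan is to verify the \frech property directly: given $A \subseteq X$ and a point $x \in \clsr{A}$, I must produce a sequence from $A$ converging to $x$. First I would use countable tightness to replace $A$ by a countable set: since $t(X) = \aleph_0$, there is a countable $B \subseteq A$ with $x \in \clsr{B}$. The whole problem is thereby localized to the subspace $\clsr{B}$, which is compact because it is closed in the compact space $X$. It then suffices to find a sequence in $B$ converging to $x$ within $\clsr{B}$, since such convergence is inherited by the ambient space $X$.

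Next I would show that $\clsr{B}$ is metrizable. Because $X$ is monolithic and $B$ is countable, $\aleph_0$-monolithicity gives $nw(\clsr{B}) \leq \aleph_0$; that is, $\clsr{B}$ has a countable network. Now $\clsr{B}$ is compact, and for compact spaces network weight and weight coincide (the fact $nw(X) = w(X)$ for compact $X$ recorded above), so $w(\clsr{B}) \leq \aleph_0$, i.e.\ $\clsr{B}$ is second countable. A compact Hausdorff second countable space is metrizable, so $\clsr{B}$ is a compact metric space.

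Finally, metrizable spaces are first countable, hence \frech. Since $x$ lies in the closure of $B$ taken inside the metrizable space $\clsr{B}$, there is a sequence $\seq{b}{n}{n<\omega}$ in $B$ converging to $x$ in $\clsr{B}$, and therefore in $X$. As $B \subseteq A$, this exhibits the desired convergent sequence, proving that $X$ is \frech. I expect the only substantive step to be the passage from a countable network to metrizability: this is exactly where compactness is used, to upgrade $nw$ to $w$ via $nw = w$ for compacta. The countable tightness hypothesis does the job of reducing to a countable set, while monolithicity supplies the countable network; both hypotheses are thus genuinely needed, and the compactness ties them together.
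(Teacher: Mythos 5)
Your proof is correct and follows essentially the same route as the paper's: countable tightness reduces to a countable $B\subseteq A$, $\aleph_0$-monolithicity gives $nw(\clsr{B})\leq\aleph_0$, compactness upgrades this to $w(\clsr{B})\leq\aleph_0$, and second countability yields the convergent sequence. The only (harmless) difference is your extra detour through metrizability of $\clsr{B}$, which is not needed, since second countability already gives first countability and hence the desired sequence.
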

\begin{proof}
Let $X$ be a countably tight monolithic compact space and suppose that $x\in\overline{A}$ for some $A\subseteq X$. Let $B\subseteq A$ be a countable subset such that $x\in \overline{B}$. Thus $w(\overline{B})=nw(\overline{B})=\aleph_0$ since $\overline{B}$ is compact and monolithic. Then $\overline{B}$ is second countable and so there is a sequence in $B$ converging to $x$. 
\end{proof}

We now have all we need to prove Theorem \ref{thm:lindSgroth} that says that all \lind $\Sigma$ spaces are Grothendieck:

\begin{proof}[Proof of Theorem \ref{thm:lindSgroth}]
 By Theorem \ref{lindpsk}, every \lind $p$-space is a $k$-space. Then by Proposition \ref{prop:kspacewgroth} and Theorem \ref{lindssta}, \lind $p$-spaces are weakly Grothendieck and stable. By the \arhan-Pytkeev Theorem, $C_p(X)$ for a \lind $p$-space $X$ is countably tight, since finite products of \lind $p$-spaces are \lind. Moreover, such $C_p(X)$ is monolithic by Theorem \ref{monosta}. We conclude that \lind $p$-spaces are Grothendieck by Proposition \ref{prop:gspace} and Lemma \ref{ctmono}. Finally, given that \lind $\Sigma$ spaces are continuous image of \lind $p$-spaces, we conclude that \lind $\Sigma$ spaces are Grothendieck by Theorem \ref{thm:contimage}.
\end{proof}

\begin{defi}
	A topological space $X$ is \emph{$\sigma$-compact} if it can be written as a countable union of compact subspaces. $X$ is \emph{k-separable} if it has a dense $\sigma$-compact subspace.
\end{defi}

The following theorem collects well-known (to $C_p$-theorists) results on conditions that imply Grothendieck:

\begin{thm}\label{whatgrot}
	A topological space $X$ is a \groth space if it satisfies any of the following:
	\begin{enumerate}[label=(\roman*)]
		\item $X$ has a dense countably compact subspace.
		\item $X$ is $k$-separable.
		\item \label{groth3}$X$ has a dense \lind $\Sigma$-space.
	\end{enumerate}
\end{thm}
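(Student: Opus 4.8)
The unifying idea is that each hypothesis produces a dense subspace that is already known (or can be shown) to be \groth, after which Theorem \ref{thm:densesubspace} upgrades this to $X$ itself. So in every case the plan is the same: locate a dense $Y \subseteq X$, prove $Y$ is \groth, and invoke Theorem \ref{thm:densesubspace}.

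For (iii) this is immediate: a dense \lind $\Sigma$-subspace is \groth by Theorem \ref{thm:lindSgroth}, so Theorem \ref{thm:densesubspace} finishes the case with nothing further to prove. Case (ii) I would reduce to (iii) by checking that every $\sigma$-compact space is \lind $\Sigma$. Writing a dense $\sigma$-compact subspace as $\bigcup_{n<\omega} K_n$ with each $K_n$ compact, I would form the topological sum $\bigoplus_{n<\omega} K_n$ and map it onto the discrete countable space $\omega$ by collapsing each $K_n$ to the point $n$. This map is continuous with compact fibres, and it is closed because in a discrete space every image set is closed; hence it is perfect onto a second countable space, so $\bigoplus_{n<\omega} K_n$ is a \lind $p$-space by Definition \ref{def:pSspaces}. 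The obvious surjection $\bigoplus_{n<\omega}K_n \to \bigcup_{n<\omega} K_n$ then exhibits the $\sigma$-compact subspace as a continuous image of a \lind $p$-space, i.e.\ as \lind $\Sigma$, and (ii) becomes a special case of (iii).

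Case (i) is the genuine obstacle, since a countably compact space (e.g.\ $\omega_1$) need be neither \lind $\Sigma$ nor $k$-separable, so I cannot route through (ii) or (iii). Here it suffices to prove directly that a countably compact $Y$ is \groth. \groth's Theorem already gives that $C_p(Y)$ is a $g$-space, so by Proposition \ref{prop:gspace} the only thing left is to show that every compact subspace $K \subseteq C_p(Y)$ is \frech. This is the heart of the matter, and my plan is the classical double-evaluation argument realizing $K$ as an Eberlein-type compactum. Consider the evaluation map $e\colon Y \to C_p(K)$ sending $y$ to the functional $h \mapsto h(y)$; it is continuous, so $e(Y)$ is countably compact (Proposition \ref{contcount}) and sits inside $C_p(K)$. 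Since $K$ is compact, hence countably compact, \groth's Theorem applies with $K$ in the role of the base space and shows that $L := \clsr{e(Y)}$ is compact in $C_p(K)$. Each $h \in K$ then induces the continuous evaluation functional $\hat h\colon L \to \reals$, $\hat h(\ell) = \ell(h)$, and $h \mapsto \hat h$ is a continuous injection of $K$ into $C_p(L)$ (injectivity because $e(Y)$ separates the points of $K$). As $K$ is compact this map is a homeomorphism onto a compact subspace of $C_p(L)$. Finally, $L$ is compact, hence \lind $\Sigma$, hence \groth by Theorem \ref{thm:lindSgroth}, so $C_p(L)$ is a hereditary $g$-space and every one of its compact subspaces is \frech by Proposition \ref{prop:gspace}; in particular the homeomorphic copy of $K$ is \frech, and therefore so is $K$.

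The step I expect to be the main obstacle is exactly this last case: passing from the $g$-space conclusion of \groth's Theorem to the \emph{hereditary} $g$-space property for countably compact $Y$, i.e.\ verifying that arbitrary compact subsets of $C_p(Y)$ are \frech. The difficulty is that $C_p(Y)$ is typically not countably tight when $Y$ is merely countably compact, so the tightness-based arguments used elsewhere in the paper are unavailable; the double-evaluation trick circumvents this by reducing to the compact (Eberlein) case, where Theorem \ref{thm:lindSgroth} and Proposition \ref{prop:gspace} already deliver the conclusion. In writing this up I would take care over the two routine but essential points that $\hat h$ is genuinely continuous on $L$ and that $e(Y)$ separates the points of $K$, since these underlie the embedding into $C_p(L)$.
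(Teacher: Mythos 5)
Your proof is correct, and at the top level it follows the same skeleton as the paper: in each case you produce a dense \groth subspace and apply Theorem \ref{thm:densesubspace}, with (ii) reduced to (iii). The difference lies in what gets proved versus cited. The paper disposes of (i) by quoting Pryce's theorem that countably compact spaces are \groth (Grothendieck's theorem itself giving only weakly \groth), and disposes of (ii) by citing Tkachuk for the fact that $\sigma$-compact spaces are \lind $\Sigma$-spaces; you supply proofs of both facts. Your topological-sum argument for $\sigma$-compact implies \lind $\Sigma$ is correct under the paper's Definition \ref{def:pSspaces}: the sum $\bigoplus_{n<\omega}K_n$ collapses perfectly onto the countable discrete space $\omega$, and then maps continuously onto $\bigcup_{n<\omega}K_n$. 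Your double-evaluation argument for (i) is essentially Pryce's own proof: two applications of Grothendieck's theorem (once over $Y$ to get that $C_p(Y)$ is a $g$-space, once over the compact $K$ to get compactness of $L=\clsr{e(Y)}$ in $C_p(K)$), followed by the embedding of $K$ into $C_p(L)$ and Proposition \ref{prop:gspace} applied to the \groth space $L$. The supporting details all check out: $e$ and $h\mapsto\hat h$ are continuous because they are assembled from pointwise evaluations of continuous functions; $e(Y)$ separates the points of $K$ precisely because $K\subseteq C_p(Y)$; and a continuous injection of the compact $K$ into a Hausdorff space is a homeomorphism onto its image. What your version buys is self-containment: everything is derived from results already established in the paper (Grothendieck's theorem, Propositions \ref{prop:gspace} and \ref{contcount}, Theorems \ref{thm:lindSgroth} and \ref{thm:densesubspace}), at the cost of length, whereas the paper's version is two lines because it leans on the literature. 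Your observation that (i) cannot be routed through (ii) or (iii) --- e.g.\ $\omega_1$ is countably compact but neither \lind $\Sigma$ nor $k$-separable --- is also correct and justifies treating that case separately.
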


\begin{proof}\mbox{}
\begin{enumerate}[label=(\roman*)]
	\item \groth's Theorem implies that countably compact spaces are weakly \groth. Pryce \cite{Pryce1971} proved that they are indeed \groth. The result follows from Theorem \ref{thm:densesubspace}.

	\item Follows from \ref{groth3} (see \cite{{Tkachuk2010}}) and Theorem \ref{thm:densesubspace}.

	\item This follows immediately from Theorems \ref{thm:densesubspace} and \ref{thm:lindSgroth}.
\end{enumerate}
\end{proof}

\begin{prob}
	Find a common generalization of these three conditions that implies \groth.
\end{prob}

\section{Acknowledgement}
We have greatly appreciated the comments of the anonymous referee, who has prodded us to make this paper more accessible to model theorists.

\newpage
\bibliographystyle{abbrv}
\bibliography{Beyond76.bib}

\end{document}